\newlength{\defbaselineskip}
\newcommand{\setlinespacing}[1]%
           {\setlength{\baselineskip}{#1 \defbaselineskip}}
\newcommand{\singlespacing}{\setlength{\baselineskip}{\defbaselineskip}}
\newcommand{\R}{\ensuremath{\mathbb{R}}}
\newcommand{\C}{\ensuremath{\mathbb{C}}}
\renewcommand{\b}{\ensuremath{\beta}}
\newcommand{\e}{\ensuremath{\varepsilon}}
\newcommand{\p}{\ensuremath{\partial}}
\renewcommand{\k}{\ensuremath{\kappa}}
\renewcommand{\d}{\ensuremath{\delta}}
\newcommand{\g}{\ensuremath{\gamma}}
\newcommand{\G}{\ensuremath{\Gamma}}
\newcommand{\s}{\ensuremath{\sigma}}
\renewcommand{\l}{\ensuremath{\lambda}}
\renewcommand{\a}{\ensuremath{\alpha}}
\renewcommand{\L}{\ensuremath{\mathcal{L}}}
\renewcommand{\O}{\ensuremath{\mathcal{O}}}
\newcommand{\oV}{\ensuremath{\overline{V}}}
\DeclarePairedDelimiter{\ip}\langle\rangle
\DeclarePairedDelimiter{\nrm}\lVert\rVert
\def\Re{\mathop\mathrm{Re}\nolimits}			
\newcommand{\Real}{\mathbb{R}}							
\newcommand{\abs}[1]{\left\vert#1\right\vert}			
\newcommand{\norm}[1]{\left\Vert#1\right\Vert}		
\newcommand{\sref}[1]{(\ref{#1})}                       
\newtheorem{thm}{Theorem}[section]
\newtheorem{cor}[thm]{Corollary}
\newtheorem{lem}[thm]{Lemma}
\newtheorem{prop}[thm]{Proposition}
\newtheorem*{defn*}{Definition}
\begin{document}

\numberwithin{equation}{section}
\renewcommand{\theequation}{\thesection.\arabic{equation}}

\begin{frontmatter}
\title{Stability of Traveling Waves \\ for Systems of Reaction-Diffusion Equations \\
  with Multiplicative Noise }
\journal{SIMA}

\author[LD1]{C. H. S. Hamster\corauthref{coraut}},
\corauth[coraut]{Corresponding author. }
\author[LD2]{H. J. Hupkes},
\address[LD1]{
  Mathematisch Instituut - Universiteit Leiden \\
  P.O. Box 9512; 2300 RA Leiden; The Netherlands \\
  Email:  {\normalfont{\texttt{c.h.s.hamster@math.leidenuniv.nl}}}
}
\address[LD2]{
  Mathematisch Instituut - Universiteit Leiden \\
  P.O. Box 9512; 2300 RA Leiden; The Netherlands \\ Email:  {\normalfont{\texttt{hhupkes@math.leidenuniv.nl}}}
}

\date{Version of \today}

\begin{abstract}
\singlespacing

We consider reaction-diffusion equations
that are stochastically forced by a small multiplicative noise term.
We show that spectrally stable traveling wave solutions
to the deterministic system retain their orbital stability
if the amplitude of the noise is sufficiently small.

By applying a stochastic phase-shift
together with a time-transform, we obtain a
quasi-linear SPDE that describes the fluctuations from the
primary wave. We subsequently follow the semigroup approach developed in \cite{Hamster2017} to handle the nonlinear stability question. 
The main novel feature is that we no longer require
the diffusion coefficients to be equal.
\end{abstract}

\begin{subjclass}
\singlespacing
35K57 \sep 35R60 .
\end{subjclass}

\begin{keyword}
\singlespacing
traveling waves, stochastic forcing,
nonlinear stability, stochastic phase shift.
\end{keyword}

\end{frontmatter}

\section{Introduction}

In this paper we consider stochastically perturbed versions
of a class of reaction-diffusion equations
that includes the FitzHugh-Nagumo equation
\begin{equation}
\label{eq:int:fzhnag:pde}
\begin{array}{lcl}
u_t & = & u_{xx} + f_{\mathrm{cub}}(u) - w
\\[0.2cm]
w_t & = & \varrho w_{xx} + \e [ u - \gamma w ] .
\end{array}
\end{equation}
Here we take $\e,\varrho,\gamma > 0$ and consider
the standard bistable nonlinearity
\begin{equation}
f_{\mathrm{cub}}(u) = u ( 1 - u )( u -a ).
\end{equation}
It has been known for quite some time that this system admits spectrally (and nonlinearly)
stable traveling pulse solutions when
$(\varrho, \gamma, \e)$ are all small \cite{alexander1990topological}. Recently, such results have
also become available for the equal-diffusion setting $\varrho = 1$
by using variational techniques
together with the Maslov index \cite{chen2015traveling,cornwell2017opening,cornwell2017existence}.

Our goal here is to show that these spectrally
stable wave solutions survive in a suitable
sense upon adding a small pointwise
multiplicative noise term to the underlying PDE. In particular,
we generalize previous results in \cite{Hamster2017} where we were only able to consider the special case $\varrho = 1$.
For example, we are now able to cover the Stochastic Partial Differential Equation (SPDE)
\begin{align}
\label{eq:int:SPDE:fhn}\begin{split}
dU &= \big[ U_{xx} + f_{\mathrm{cub}}(U) -W ] dt
+ \sigma \chi(U)  U ( 1 - U) d \beta_t\\
dW &= \big[ \varrho W_{xx} + \e(U-\gamma W)] dt\end{split}
\end{align}
for small $\abs{\sigma}$, in which $(\beta_t)$
is a Brownian motion and $\chi(U)$
is a cut-off function with $\chi(U) = 1$
for $\abs{U} \le 2$. The presence
of this cut-off is required to
enforce the global Lipschitz-smoothness of the noise term.
In this regime, one can think of \sref{eq:int:SPDE:fhn}
as a version of the FitzHugh-Nagumo PDE \sref{eq:int:fzhnag:pde}
where the parameter $a$ is replaced by
$a + \sigma  \dot{\beta}_t$. Notice that
the noise vanishes at the asymptotic state $U = 0$ of the pulse.

\paragraph{Phase tracking}
Although the ability to include noise in models is becoming an essential tool
in many disciplines \cite{Bressloff,bressloff2015nonlinear,
Zhang,NunnoAdvMathFinance2011,Climate},
our understanding of the impact that such distortions have
on basic patterns such as stripes, spots and waves
is still in a preliminary stage
\cite{brassesco1995brownian,funaki1995scaling,Kuske2017,gowda2015early,vinals1991numerical,Lord2012,Shardlow,Bloemker}.
As explained in detail in \cite[{\S}1]{Hamster2017},
several approaches are being developed \cite{Stannat,stannat2014stability,Lang,Inglis}
to analyze stochastically forced waves
that each require a different set of conditions on the noise and structure of the system. The first main 
issue that often limits the application range of the results
is that the underlying linear flow is required to be immediately
contractive, which is (probably) not true for multi-component systems
such as \sref{eq:int:fzhnag:pde}.
The second main issue 
is that an appropriate phase needs to be defined for the wave.
Various ad-hoc choices have been made for this purpose,
which typically rely on geometric intuition of some kind.

Inspired by the agnostic viewpoint described in the expository paper \cite{Zumbrun2009},
we initiated a program in \cite{Hamster2017} that aims to define the phase, shape
and speed of a stochastic wave purely by the technical considerations
that arise when mimicking a deterministic nonlinear stability argument. In particular,
the phase is constantly updated in such a way that the neutral part of the linearized flow
is not felt by the nonlinear terms. 
Together with our novel semigroup approach, this allows us to significantly extend the class of systems
for which it is possible to obtain 
stability results. 

Although defined a priori by technical considerations, we emphasize that in practice our stochastic phase tracking gives us
high quality a posteriori information concerning the position
of the stochastic wave; see Figure \ref{fig:UDifRef} in the next section. 
We remark that the formal approach  recently developed in \cite{cartwright2019} also touches upon several of the ideas
underlying our approach.

\paragraph{Stochastic wave speed}
In principle, once the notion of a stochastic phase
has been defined, one can introduce the stochastic speed 
by taking a derivative in a suitable sense.
In \S\ref{sec:mr:od} and \S\ref{sec:mr:ex} we discuss
two key effects that influence the stochastic speed.
On the one hand, we show how to construct a 
`frozen wave' $\Phi_\sigma$ that feels only
instantaneous stochastic forcing and travels
at an instantaneous speed $c_\sigma$. The interaction
between the nonlinear terms and the stochastic forcing
causes the shape of the wave profile to fluctuate around
$\Phi_\sigma$, which 
introduces an extra net effect in the speed that we refer to as `orbital drift'. 

Our framework provides a mechanism by which both effects can be explicitly described in a perturbative fashion. Indeed, the example in \S\ref{sec:mr:ex} illustrates how this expansion
can be performed up to order $\O(\s^2)$.
Our computations show that the results are in good agreement with numerical simulations of the full SPDE. We emphasize
here that our explicit expressions crucially
and non-trivially
involve the long-term behavior of the underlying semigroup, 
a clear indication of the benefits that can be gained by pursuing a semigroup-based approach.

\paragraph{Obstructions}
Applying the phase tracking procedure sketched above
to the FitzHugh-Nagumo SPDE \sref{eq:int:SPDE:fhn},
one can show that
the
deviation
$(\tilde{U}, \tilde{W})$ from the phase-shifted stochastic wave
satisfies a SPDE of the general form
\begin{equation}
\label{eq:int:fhn:spde:deviation}
\begin{array}{lcl}
d \tilde{U} & = & \Big[
   \big( 1 + \frac{1}{2} \sigma^2 b( \tilde{U}, \tilde{W} )^2  \big) \tilde{U}_{xx}
  + \mathcal{R}_U(\tilde{U}, \tilde{W},  \tilde{U}_x, \tilde{W}_x )  \Big] dt + \mathcal{S}_U(\tilde{U}, \tilde{W},  \tilde{U}_x, \tilde{W}_x ) \, d \beta_t ,
\\[0.2cm]
d \tilde{W} & = & \Big[
   \big( \varrho + \frac{1}{2} \sigma^2 b( \tilde{U}, \tilde{W} )^2  \big) \tilde{W}_{xx}
    + \mathcal{R}_W(\tilde{U}, \tilde{W},  \tilde{U}_x, \tilde{W}_x )  \Big] dt
    + \mathcal{S}_W(\tilde{U}, \tilde{W},  \tilde{U}_x, \tilde{W}_x ) \, d \beta_t
\end{array}
\end{equation}
in which $b$ is a bounded scalar function. For $\sigma \neq 0$
this is a quasi-linear system, but the coefficients in front of the second-order derivatives
are constant with respect to the spatial variable $x$.
These extra second-order terms are a direct consequence of It\^o's formula,
which shows that second derivatives need to be included when applying the chain rule
in a stochastic setting. In particular, deterministic phase-shifts lead to
extra convective terms, while stochastic phase-shifts lead to
extra diffusive terms.

These extra nonlinear diffusive terms cause short-term
regularity issues that
prevent a direct analysis of \sref{eq:int:fhn:spde:deviation}
in a semigroup framework. However, in the special case $\varrho = 1$
they can be transformed away by introducing a new time variable $\tau$
that satisfies
\begin{equation}
\tau'(t) =  1 + \frac{1}{2} \sigma^2 b( \tilde{U}, \tilde{W} )^2 .
\end{equation}
This approach was taken in \cite{Hamster2017},
where we studied reaction-diffusion systems with equal diffusion strengths.

In this paper we concentrate on the case $\varrho \neq 1$ and develop
a more subtle version of this argument. In fact,
we use a similar procedure to
scale out the first of the two nonlinear diffusion terms.
The remaining nonlinear second-order term is only present in the equation
for $\tilde{W}$, which allows us to measure its effect on $\tilde{U}$
via the off-diagonal elements of the associated semigroup. The key point
is that these off-diagonal elements have better regularity properties than their on-diagonal
counterparts, which allows us to side-step the regularity issues outlined above.
Indeed, by commuting $\partial_x$ with the semigroup,
one can obtain an integral expression for $\tilde{U}$
that only involves $(\tilde{U}, \tilde{W}, \partial_x \tilde{U}, \partial_x \tilde{W})$
and that converges in $L^2(\Real)$. A second time-transform
can be used to obtain similar results for $\tilde{W}$.

A second major complication in our stochastic setting is that
$(\partial_x \tilde{U}, \partial_x \tilde{W})$
cannot be directly estimated in $L^2(\Real)$. Indeed,
in order to handle the stochastic integrals
we need tools such as the It\^o Isometry, which
requires square integrability in time.
However, squaring the natural $\O(t^{-1/2})$ short-term behavior
of the semigroup as measured in $\mathcal{L}( L^2; H^1 )$
leads to integrals involving $t^{-1}$ which diverge.

This difficulty was addressed in \cite{Hamster2017} by controlling
temporal integrals of the $H^1$-norm. By performing a delicate
integration-by-parts procedure one can explicitly isolate the
troublesome terms and show that the divergence is in fact
`integrated out'. A similar approach works for our setting here,
but the interaction between the separate time-transforms used for $\tilde{U}$ and $\tilde{W}$
requires a careful analysis with some non-trivial modifications.

\paragraph{Outlook}
Although this paper relaxes the severe equal-diffusion
requirement in \cite{Hamster2017}, we wish to emphasize
that our technical phase-tracking approach is still in
a proof-of-concept state. For example, we rely heavily
on the diffusive smoothening of the deterministic flow
to handle the extra diffusive effects introduced by the stochastic
phase shifts. Taking $\varrho= 0$ removes the former but keeps
the latter, which makes it unclear at present how
to handle such a situation. This is particularly relevant
for many neural field models where the diffusion is modeled
by convolution kernels rather than the standard Laplacian.

It is also unclear at present if our framework can be generalized
to deal with branches of essential spectrum that touch the imaginary axis.
This occurs when analyzing planar
waves in two or more dimensions \cite{BHM, KAP1997, HJHSTB2D, HJHOBST2D}
or when studying viscous shocks in the context of conservation laws
\cite{beck2010nonlinear,MasciaZumbrun02,HJHNLS}. In the deterministic
case these settings require the use of pointwise estimates
on Green's functions, which give more refined control on the linear
flow than standard semigroup bounds.

We are more confident about the possibility of including more general types of noise
in our framework.  For instance, we believe that there is no fundamental obstruction
including noise that is colored in space\footnote{During the review 
process, results in this direction were published in \cite{Hamster2020}.}, which arises frequently in many applications
\cite{Lang,Garcia2001}. In addition, it should also be possible
to remove our dependence on the variational framework
developed by Liu and R\"ockner \cite{LiuRockner}. Indeed, our estimates on the mild solutions
appear to be strong enough to allow short-term existence results to be obtained for the original
SPDE in the vicinity of the wave.

\paragraph{Organization}
This paper is reasonably self-contained and the main narrative
can be read independently of \cite{Hamster2017}. However, we do borrow
some results from \cite{Hamster2017} that do not depend on the structure of the diffusion matrix.
This allows us to focus our attention on the parts that are essentially different.

We formulate our phase-tracking mechanism and state our
main results in \S\ref{sec:mr}. In addition, we
illustrate these results in the same section 
by numerically analyzing an example system of FitzHugh-Nagumo type.
In \S\ref{sec:SplitSem} we decompose the semigroup associated to the linearization
of the deterministic wave into its diagonal and off-diagonal parts.
We focus specifically on the short-time behavior of the off-diagonal elements
and show that the commutator of $\partial_x$ and the semigroup extends to a bounded
operator on $L^2$.
In \S\ref{sec:STT}  we describe the stochastic phase-shifts and time-shifts
that are required to eliminate the problematic terms from our equations.
We apply the results from \S\ref{sec:SplitSem} to recast the resulting SPDE into a mild
formulation and establish bounds for the final nonlinearities.
This allows us to close a nonlinear stability argument in \S\ref{sec:nls}
by carefully estimating each of the mild integrals.

\paragraph{Acknowledgements.}
Hupkes acknowledges support from the Netherlands Organization
for Scientific Research (NWO) (grant 639.032.612).

\section{Main results}
\label{sec:mr}

In this paper we are interested in the stability of traveling
wave solutions to SPDEs of the form
\begin{equation}
\label{eq:mr:main:spde}
dU =
  \big[ \rho \partial_{xx} U + f(U) \big] dt
  + \sigma g(U) d \beta_t.
\end{equation}
Here we take $U = U(x,t) \in \Real^n$ with $x \in \Real$ and $t \ge 0$.
We start in \S\ref{sec:mr:fs} by formulating
precise conditions on the system above
and stating our main theorem.
In \S\ref{sec:mr:od}, we subsequently discuss how our formalism gives us explicit expressions for the stochastic corrections
to the deterministic wave speed. We actually compute these corrections up to $\O(\sigma^2)$ for the FitzHugh-Nagumo equation in \S\ref{sec:mr:ex}
and show that the results are in good agreement with numerical simulations of the full SPDE.  

\subsection{Formal setup}
\label{sec:mr:fs}
We start by formulating two structural conditions on the deterministic
and stochastic part of \sref{eq:mr:main:spde}.
Together these imply that our system
has a variational structure with a nonlinearity $f$ that grows at most cubically.
In particular, it is covered by the variational framework developed in \cite{LiuRockner} with $\alpha = 2$.
The crucial difference between assumption (HDt) below and assumption (HA) in  \cite{Hamster2017}
is that the diagonal elements of $\rho$ no longer have to be equal.

\begin{itemize}
\item[(HDt)]{
  The matrix $\rho \in \Real^{n\times n}$ is a diagonal matrix with
  strictly positive diagonal elements $\{\rho_i\}_{i=1}^n$.
  In addition, we have $f \in C^3(\Real^n; \Real^n)$
 and there exist $u_\pm \in \Real^n$
 for which $f(u_-) = f(u_+) = 0 $.
 Finally, 
 $D^3 f$ is bounded
 and there exists
 a constant $K_{\mathrm{var}} > 0$ so that the one-sided inequality
 \begin{equation}
    \langle f( u_A ) - f(u_B) , u_A - u_B \rangle_{\Real^n} \le K_{\mathrm{var}} \abs{ u_A - u_B}^2
 \end{equation}
 holds for all pairs $(u_A,u_B) \in \Real^n \times \Real^n$.
}
\item[(HSt)]{
  The function $g \in C^2(\Real^n; \Real^n)$
  is globally Lipschitz with $g(u_-) = g(u_+) = 0$.
  In addition, $Dg$ is bounded and globally Lipschitz.
  Finally, the process $(\beta_t)_{t \ge 0}$ is a Brownian motion
  with respect to the complete filtered probability space
  \begin{equation}
    \Big(\Omega, \mathcal{F}, ( \mathcal{F}_t)_{t \ge 0} , \mathbb{P} \Big).
  \end{equation}
}
\end{itemize}

We write $\rho_{\mathrm{min}}=\min\{\rho_i\}>0$,
together with $\rho_{\mathrm{max}} = \max\{\rho_i\}$.
In addition, we introduce the shorthands
\begin{equation}
L^2 = L^2(\Real;\Real^n),
\qquad
H^1 = H^1(\Real ;\Real^n),
\qquad
H^2 = H^2(\Real; \Real^n).
\end{equation}
Our final assumption states that the deterministic part
of \sref{eq:mr:main:spde}
has a spectrally stable traveling wave solution
that connects the two equilibria $u_\pm$
(which are allowed to be equal). This traveling
wave should approach these equilibria at an exponential rate.

\begin{itemize}
\item[(HTw)]{
  There exists a wavespeed $c_0 \in \Real$ and a waveprofile
  $\Phi_0 \in C^2(\Real ; \Real^n)$ that
  satisfies the traveling wave ODE
  \begin{equation}\label{eq:MR:TWODE}
    \rho \Phi_0''+c_0 \Phi_0' + f(\Phi_0)=0
  \end{equation}
  and approaches its limiting values
  $\Phi_0(\pm \infty) = u_\pm$ at an exponential rate.
  In addition, the associated linear operator
  $\mathcal{L}_{\mathrm{tw}}: H^2 \to L^2$ that acts as
  \begin{equation}
   \label{eq:mr:def:l:tw}
   [\mathcal{L}_{\mathrm{tw}} v](\xi) =
      \rho v''(\xi)+c_0 v'(\xi)  + Df\big(\Phi_0(\xi) \big) v(\xi)
  \end{equation}
  has a simple eigenvalue at $\lambda = 0$
  and has no other spectrum
  in the half-plane $\{\Re \lambda \ge -2\beta\} \subset \mathbb{C}$
  for some $ \beta > 0$.
}
\end{itemize}

The formal adjoint
\begin{equation}
\mathcal{L}_{\mathrm{tw}}^*: H^2 \to L^2
\end{equation}
of the operator \sref{eq:mr:def:l:tw}  acts as
\begin{equation}
[\mathcal{L}_{\mathrm{tw}}^* w](\xi) =
\rho w''(\xi) -c_0 w'(\xi) +  (Df\big(\Phi_0(\xi) \big))^* w(\xi) .
\end{equation}
Indeed, one easily verifies that
\begin{equation}
\langle \mathcal{L}_{\mathrm{tw}} v , w \rangle_{L^2}
= \langle v, \mathcal{L}_{\mathrm{tw}}^* w \rangle_{L^2}
\end{equation}
whenever $(v,w) \in H^2 \times H^2$. Here $\langle \cdot, \cdot \rangle_{L^2}$
denotes the standard inner-product on $L^2$.
The assumption that zero is a simple eigenvalue for $\mathcal{L}_{\mathrm{tw}}$
implies that $\mathcal{L}_{\mathrm{tw}}^* \psi_{\mathrm{tw}} = 0$
for some $\psi_{\mathrm{tw}} \in H^2$ that we normalize to get
  \begin{equation}
    \label{eq:mr:hs:norm:cnd:psitw}
     \langle \Phi_0' , \psi_{\mathrm{tw}} \rangle_{L^2} = 1.
  \end{equation}

We remark here that it is advantageous to view
SPDEs as evolutions on Hilbert spaces, since
powerful tools are available in this setting.
However, in the case where $u_- \neq u_+$,
the waveprofile $\Phi_0$ does not lie in the natural
statespace $L^2$. In order to circumvent this problem,
we use $\Phi_0$ as a reference function
that connects $u_-$ to $u_+$, allowing
us to measure deviations from this
function in the Hilbert spaces $H^1$ and $L^2$. In
 order to highlight this dual role and prevent
any confusion,
we introduce the duplicate notation
\begin{equation}
\Phi_{\mathrm{ref}} = \Phi_0.
\end{equation}
This allows us to introduce the sets
\begin{equation}
\mathcal{U}_{L^2} = \Phi_{\mathrm{ref}} + L^2,
\qquad
\mathcal{U}_{H^1} = \Phi_{\mathrm{ref}} + H^1,
\qquad
\mathcal{U}_{H^2} = \Phi_{\mathrm{ref}} + H^2,
\end{equation}
which we will use as the relevant state-spaces
to capture the solutions $U$ to \sref{eq:mr:main:spde}.

We now set out
to couple an extra phase-tracking\footnote{See \cite[\S 2.4]{Hamster2017} for a more intuitive explanation of this phase.} SDE
to our SPDE \sref{eq:mr:main:spde}.
As a preparation, we
pick a sufficiently large constant $K_{\mathrm{high}} > 0$
together with
two $C^\infty$-smooth non-decreasing cut-off functions
\begin{equation}
\chi_{\mathrm{low}}: \Real \to [\frac{1}{4}, \infty),
\qquad
\chi_{\mathrm{high}}: \Real \to
[- K_{\mathrm{high}} - 1, K_{\mathrm{high}} + 1]
\end{equation}
that satisfy the identities
\begin{equation}
\chi_{\mathrm{low}}(\vartheta) = \frac{1}{4} \hbox{ for } \vartheta \le \frac{1}{4},
\qquad
\chi_{\mathrm{low}}(\vartheta) = \vartheta \hbox{ for } \vartheta \ge \frac{1}{2},
\end{equation}
together with
\begin{equation}
\chi_{\mathrm{high}}(\vartheta) = \vartheta \hbox{ for } \abs{\vartheta} \le K_{\mathrm{high}},
\qquad
\chi_{\mathrm{high}}(\vartheta) = \mathrm{sign}(\vartheta) \big[K_{\mathrm{high}} + 1]
   \hbox{ for } \abs{\vartheta} \ge K_{\mathrm{high}} + 1.
\end{equation}

For any $u \in \mathcal{U}_{H^1}$
and $\psi \in H^1$,
this allows us to introduce
the function
\begin{equation}
\label{eq:mr:def:b}
\begin{array}{lcl}
b(u, \psi)
 & = &
  - \Big[
    \chi_{\mathrm{low}}\big(
    \langle \partial_\xi u ,
   \psi \rangle_{L^2} \big) \Big]^{-1}
     \chi_{\mathrm{high}}\big(
        \langle g(u) , \psi \rangle_{L^2}
     \big) ,
\\[0.2cm]
\end{array}
\end{equation}
together with the diagonal $n\times n$-matrix
\begin{equation}
\label{eq:mr:def:kappa}
\begin{array}{lcl}
\kappa_{\sigma}(u, \psi)
& = & \mathrm{diag} \{ \kappa_{\sigma;i}(u, \psi) \}_{i=1}^n
  := \mathrm{diag} \{ 1 + \frac{1}{2\rho_i} \sigma^2 b( u, \psi )^2 \}_{i=1}^n.
\end{array}
\end{equation}
In addition, for any
$u \in \mathcal{U}_{H^1}$, $c \in \Real$ and $\psi \in H^2$
we write
\begin{equation}
\begin{array}{lcl}
\label{eq:mr:def:a}
a_{\sigma}(u,  c, \psi )
 & = & -
    \Big[ \chi_{\mathrm{low}}\big( \langle \partial_\xi u, \psi \rangle_{L^2} \big) \Big]^{-1}
    \langle \kappa_{\sigma}(u,\psi)u , \rho \partial_{\xi \xi} \psi \rangle_{L^2}
\\[0.2cm]
& & \qquad
    -\Big[ \chi_{\mathrm{low}}\big( \langle \partial_\xi u, \psi \rangle_{L^2} \big) \Big]^{-1}
    \langle
        f(u )
    +c \partial_\xi u
    + \sigma^2 b( u, \psi)
      \partial_\xi [g(u )]
       ,
     \psi \rangle_{L^2} .
\end{array}
\end{equation}
The essential difference with the definitions of $\kappa_\sigma$
and $a_{\sigma}$ in \cite{Hamster2017} is that
$\kappa_{\sigma}$ is now a matrix instead of a constant.
However, this does not affect the ideas and results in {\S}3,4 and {\S}7 of \cite{Hamster2017},
which can be transferred to the current setting almost verbatim. Indeed,
one simply replaces $\rho$ by $\rho_\mathrm{min}$ or
$\rho_\mathrm{max}$ as necessary.

The traveling wave ODE \sref{eq:MR:TWODE} implies that
$a_0(\Phi_0, c_0, \psi_{\mathrm{tw}}) = 0$.
Following \cite[Prop. 2.2]{Hamster2017},
one can show that
there exists a branch
of profiles and speeds
$(\Phi_\s,c_\s)$ in $\mathcal{U}_{H^2} \times \R$ that is $\mathcal{O}(\s^2)$ close to $(\Phi_0,c_0)$,
for which
\begin{equation}
a_{\sigma}(\Phi_{\sigma}, c_{\sigma}, \psi_{\mathrm{tw}}) = 0.
\end{equation}
Upon introducing the
right-shift operators
\begin{equation}
[T_{\gamma} u](\xi)
= u(\xi - \gamma)
\end{equation}
we can now formally introduce the coupled
SPDE
\begin{equation}
\label{eq:mr:formal:spde}
\begin{array}{lcl}
dU &= &
  \big[ \rho\p_{xx} U + f(U) \big] dt
  + \sigma g(U) d \beta_t,
\\[0.2cm]
d\Gamma & = &
  \big[ c_{\sigma} + a_\sigma \big( U ,c_{\sigma}, T_{\Gamma} \psi_{\mathrm{tw}} \big)
  \big] dt
  + \sigma b\big(U , T_{\Gamma} \psi_{\mathrm{tw}} \big)
     \, d \beta_t ,
\end{array}
\end{equation}
which is the main focus in this paper.
Following the procedure used to establish
\cite[Prop. 2.1]{Hamster2017},
one can show
that this SPDE coupled with an initial condition
\begin{equation}
\label{eq:mr:init:cond}
(U, \Gamma)(0) = (u_0 , \gamma_0)  \in \mathcal{U}_{H^1} \times \Real
\end{equation}
has solutions\footnote{We refer to \cite[Prop. 2.1]{Hamster2017} for
the precise notion of a solution.}
$\big(U(t), \Gamma(t) \big) \in \mathcal{U}_{H^1} \times \Real$
that can be defined for all $t \ge 0$ and are almost-surely continuous
as maps into $\mathcal{U}_{L^2} \times \Real$.

For any initial condition $u_0 \in \mathcal{U}_{H^1}$
that is sufficiently close to $\Phi_{\sigma}$,
\cite[Prop. 2.3]{Hamster2017}
shows that it is possible to pick $\gamma_0$ in such a way that
\begin{equation}
\langle T_{-\g_{0}}u(0)-\Phi_\s, \psi_{\mathrm{tw} } \rangle_{L^2} = 0.
\end{equation}
This allows us to define the process
\begin{equation}
\label{eq:MR:DefV}
V_{u_0}(t) = T_{-\Gamma(t) }
  \big[ U(t)\big] - \Phi_{\sigma} ,
\end{equation}
which can be thought of as the
deviation of the solution
$U(t)$ of \sref{eq:mr:formal:spde}-\sref{eq:mr:init:cond}
from the stochastic wave
$\Phi_{\sigma}$ shifted to the position $\Gamma(t)$.

In order to measure the size
of this deviation
we pick $\e > 0$
and introduce the scalar function
\begin{equation}\label{eq:DefNeps}
N_{\e;u_0} (t) =
\norm{V_{u_0}(t)}_{L^2}^2
 + \int_0^t e^{- \e (t - s) }
    \norm{  V_{u_0}(s)}_{H^1}^2 \, ds .
\end{equation}
For each $T > 0$ and $\eta > 0$ we now define
the probability
\begin{equation}
p_{\e}(T, \eta, u_0) = P\Big(
 \sup_{0 \le t \le T}
 N_{\e;u_0}(t) > \eta
\Big) .
\end{equation}
Our  main result shows that
the probability that
$N_{\e;u_0}$ remains
small on timescales of order $\s^{-2}$ can be pushed arbitrarily close to one
by restricting the strength of the noise
and the size of the initial perturbation.
This extends \cite[Thm. 2.4]{Hamster2017}
to the current setting where the diffusion matrix $\rho$ need not be proportional to the identity.

\begin{thm}[{see \S\ref{sec:nls}}]
\label{thm:mr:orbital:stb}
Suppose that (HDt), (HSt) and
(HTw) are all satisfied
and pick sufficiently small constants $\e > 0$,
$\delta_0>0$, $\delta_{\eta} > 0$ and $\delta_{\sigma} > 0$.
Then there exists a constant $K > 0$
so that for every $0 \le \sigma \le \delta_{\sigma}T^{-1/2}$,
any $u_0 \in \mathcal{U}_{H^1}$
that satisfies $\nrm{u_0-\Phi_\s}_{L^2}<\delta_0$,
any $0 < \eta \le \delta_{\eta}$ and any $T > 0$,
we have the inequality
\begin{equation}
\label{eq:mr:bnd:for:p:eps}
p_{\e}(T, \eta, u_0)
  \le   \eta^{-1} K \Big[ \norm{u_0 - \Phi_{\sigma}}_{H^1}^2 + \sigma^2T \Big].
\end{equation}
\end{thm}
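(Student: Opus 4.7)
The plan is to mirror the semigroup approach of \cite{Hamster2017}, inserting the diagonal/off-diagonal decomposition of $e^{t\mathcal{L}_{\mathrm{tw}}}$ from \S\ref{sec:SplitSem} and the stochastic time-transforms of \S\ref{sec:STT} wherever the equal-diffusion argument breaks down. First I would apply It\^o's formula to the process $V_{u_0}(t) = T_{-\Gamma(t)} U(t) - \Phi_\sigma$, using the coupled SPDE \sref{eq:mr:formal:spde}. By construction of $a_\sigma$ and $b$, the drift and diffusion of $V_{u_0}$ are orthogonal to $\psi_{\mathrm{tw}}$ at every instant, so the neutral translation mode is annihilated. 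The resulting evolution is quasi-linear with leading term $\rho\, \kappa_\sigma\, \partial_{\xi\xi} V_{u_0}$, where $\kappa_\sigma = \mathrm{diag}\{1+\tfrac{1}{2\rho_i}\sigma^2 b^2\}$. The central new difficulty relative to \cite{Hamster2017} is that the component-wise principal symbols $\rho_i \kappa_{\sigma;i}$ differ from one another as soon as the $\rho_i$ do, so a single scalar time-change can no longer restore a constant-coefficient leading operator.

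To get around this I would introduce the coupled stochastic time-transforms from \S\ref{sec:STT}, one per component, with $\tau_i'(t) = \kappa_{\sigma;i}\big(V_{u_0}, T_\Gamma \psi_{\mathrm{tw}}\big)$, each of which individually replaces the $i$-th diagonal diffusion coefficient by the constant $\rho_i$. The mild integral formula for $V_{u_0}$ in terms of $e^{t \mathcal{L}_{\mathrm{tw}}}$ then inherits residual terms because different components are evaluated on different clocks. Here I would use the off-diagonal estimates from \S\ref{sec:SplitSem}: the off-diagonal blocks of $e^{t\mathcal{L}_{\mathrm{tw}}}$ are more regular than the diagonal ones, and $[\partial_\xi, e^{t \mathcal{L}_{\mathrm{tw}}}]$ extends to a bounded operator on $L^2$. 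Commuting $\partial_\xi$ through the semigroup lets every residual term be rewritten as a convolution against a kernel that loses at most one factor of $t^{-1/2}$, keeping the mild integrals convergent in $L^2$ despite the absence of classical parabolic regularization across the unequal-$\rho_i$ components.

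With the mild formula in hand I would estimate each contribution to $N_{\e;u_0}(t)$ on the stopping time $\tau_\eta = \inf\{ t \ge 0 : N_{\e;u_0}(t) > \eta \}$. The linear semigroup term contributes $K \|u_0 - \Phi_\sigma\|_{H^1}^2$ by spectral stability of $\mathcal{L}_{\mathrm{tw}}$ on the complement of the neutral eigenspace, together with the normalization \sref{eq:mr:hs:norm:cnd:psitw}. The deterministic nonlinear integrals are superlinear in $V_{u_0}$ and $\partial_\xi V_{u_0}$, so on $[0,\tau_\eta]$ they are bounded by $K\eta\, N_{\e;u_0}$. For the stochastic integrals I would apply the It\^o isometry; the naive estimate yields a time integral with a divergent $t^{-1}$ weight coming from the squared short-time norm in $\mathcal{L}(L^2;H^1)$. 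Following \cite{Hamster2017} I would exploit the exponentially weighted $H^1$-integral already present in $N_{\e;u_0}$ and perform an integration-by-parts in the mild integral: the commutator bound from \S\ref{sec:SplitSem} shifts one spatial derivative off the stochastic kernel onto the integrand, so the $t^{-1}$ divergence is integrated out and one ends up with a convergent contribution controlled by $\sigma^2 \int_0^t e^{-\e(t-s)}\|V_{u_0}(s)\|_{H^1}^2\, ds$.

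Collecting everything and applying Doob/Burkholder--Davis--Gundy to take the supremum inside the expectation yields an inequality of the form
\begin{equation*}
\mathbb{E}\sup_{0\le s\le t\wedge\tau_\eta} N_{\e;u_0}(s)
  \le K\|u_0 - \Phi_\sigma\|_{H^1}^2 + K\sigma^2 t
     + K(\eta + \sigma^2)\,\mathbb{E}\sup_{0\le s\le t\wedge\tau_\eta} N_{\e;u_0}(s).
\end{equation*}
Choosing $\delta_\eta$ and $\delta_\sigma$ small enough to absorb the last term into the left-hand side, then specializing to $t = T$ and invoking Markov's inequality, delivers \sref{eq:mr:bnd:for:p:eps}. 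The hardest step will be the one in the second paragraph: threading the multi-component time-changes through the mild formula and verifying that the cross-component residuals truly fall into the off-diagonal part of $e^{t\mathcal{L}_{\mathrm{tw}}}$, since this is precisely what makes the commutator trick save us from the loss of regularity that would otherwise make the stochastic integrals uncontrollable.
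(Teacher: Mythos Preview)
Your overall architecture matches the paper's approach closely: component-wise time transforms $\tau_i$, the splitting of the nonlinearity so that the second-order residual sits behind $\partial_\xi$, and the use of the off-diagonal semigroup bounds and the commutator $[\partial_\xi, S(t)Q]$ from \S\ref{sec:SplitSem}. Three points deserve sharpening, however.

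First, your description of the mechanism is slightly off. It is not that ``different components are evaluated on different clocks'' within a single mild formula. Rather, for each fixed $i$ one works entirely in the clock $\tau_i$, obtaining a mild formula for the full vector $\overline{V}_i$. The residual second-order term is $\partial_\xi \Upsilon_{\sigma;i}$ with $\Upsilon_{\sigma;i} = \rho\,\phi_{\sigma;i}(v)\,\partial_\xi v$, and the crucial structural fact is that the $i$-th diagonal entry of $\phi_{\sigma;i}$ vanishes identically, so $\Upsilon_{\sigma;i}^{\,i}=0$. Hence, when one extracts the $i$-th \emph{component} of the mild formula to estimate $N_\e^i(t)$, the dangerous short-time contribution $[\partial_\xi S(\tau_i(t)-\tau)Q\,\Upsilon_{\sigma;i}]^i$ can be rewritten using only $S_{\mathrm{od}}$ and $S P$, both of which are bounded into $H^2$. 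One then sums the $N_\e^i$ over $i$ at the very end. Your phrasing suggests a single mild formula with mixed clocks, which is not what is actually done.

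Second, your displayed closing inequality has absorption constant $K(\eta+\sigma^2)$, but the paper obtains $K(\eta+\sigma^2 T+\sigma^4)$. The extra factor of $T$ arises from the supremum bounds on the stochastic convolutions (Lemmas~\ref{lem:nls:b:lin:sup:e} and \ref{lem:nls:st:sup:on:b:lin:i}), and it is precisely this factor that forces the hypothesis $\sigma\le\delta_\sigma T^{-1/2}$ rather than merely $\sigma$ small.

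Third, Burkholder--Davis--Gundy does not apply directly to stochastic convolutions of the form $\int_0^t S(t-s)\,dW_s$; the paper instead invokes a maximal inequality for analytic semigroups admitting an $H^\infty$-calculus (Lemma~\ref{lem:nls:mild:regularity}, following \cite{veraar2011note}). You should replace the BDG step by this tool.
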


\subsection{Orbital drift}
\label{sec:mr:od}
On account of the theory developed in \cite[{\S}12]{leadbetter2012extremes}
to describe the suprema of finite-dimensional Gaussian processes,
we suspect that the $\sigma^2 T$ term appearing
in the bound \sref{eq:mr:bnd:for:p:eps} can
be replaced by $\sigma^2 \ln T$. This would allow us to consider
timescales of order  $\mathrm{exp}[\delta_\sigma /\sigma^2 ]$,
which are exponential in the noise-strength instead of merely
polynomial. The key limitation is that the theory of stochastic convolutions in Hilbert spaces is still in the early stages of development.

In order to track the evolution of the phase
over such long timescales,
we follow \cite{Hamster2017} and introduce the formal Ansatz
\begin{equation}
\Gamma(t) = c_{\sigma} t + \sigma \Gamma_{\sigma;1}(t)
+ \sigma^2 \Gamma_{\sigma;2}(t) + O (\s^3) .
\end{equation}
The first-order term is the scaled Brownian motion
\begin{equation}
\label{eq:gamma:sigma:1}
\Gamma_{\sigma;1}(t) = b(\Phi_{\sigma}, \psi_{\mathrm{tw}}) \beta_t,
\end{equation}
which naturally has zero mean and hence does not
contribute to any deviation of the average observed wavespeed.

In order to understand the second-order term,
we introduce the orbital drift coefficient
\begin{equation}
\label{eq:mr:def:c:orb:drift}
\begin{array}{lcl}
c^{\mathrm{od}}_{\sigma;2}
 &  = &
\frac{1}{2} \int_0^\infty D_1^2 a_{\sigma}\big(\Phi_{\sigma} ,
  c_{\sigma} , \psi_{\mathrm{tw}} \big)
     \Big[S(s) \big( g(\Phi_{\sigma} ) + b(\Phi_{\sigma}, \psi_{\mathrm{tw}}) \Phi'_{\sigma}   \big),
     S(s) \big( g(\Phi_{\sigma} ) + b(\Phi_{\sigma}, \psi_{\mathrm{tw}}) \Phi'_{\sigma}   \big)
     \Big] \, d s ,
\end{array}
\end{equation}
in which $\{S(s)\}_{s \ge 0}$ denotes the semigroup 
generated by $\mathcal{L}_{\mathrm{tw}}$.
In \cite[{\S}2.4]{Hamster2017} we gave
an explicit expression for $\Gamma_{\sigma;2}$
and showed that
\begin{equation}
\lim_{t \to \infty} t^{-1} E \Gamma_{\sigma;2}(t)
 = c^{\mathrm{od}}_{\sigma;2} .
\end{equation}
Note that we are keeping the $\sigma$-dependence
in these definitions for notational convenience,
but in {\S}\ref{sec:mr:ex} we show how the leading
order contribution can be determined.

The discussion above suggests that it is natural
to introduce the expression
\begin{equation}
\label{eq:mr:def:c:sigma:lim}
c_{\sigma;\lim}^{(2)} =
  c_{\sigma} + \sigma^2 c^{\mathrm{od}}_{\sigma;2},
\end{equation}
which satisfies $c_{\sigma;\lim}^{(2)} -c_0 = O(\sigma^2)$.
Our conjecture is that the expected value of the wavespeed
for large times
behaves as $c_{\sigma;\lim}^{(2)} + O(\s^3)$.
In order to interpret this,
we note that the profile $\Phi_\s$ travels at an instantaneous
velocity $c_{\sigma}$, but also experiences stochastic forcing.
As a consequence of this forcing, which is mean reverting
toward $\Phi_\s$, the profile 
fluctuates in the orbital vicinity of $\Phi_\sigma$.
At leading order, the underlying mechanism behind this behavior resembles
an Ornstein-Uhlenbeck process, which 
means that the amplitude
of these fluctuations can be expected to stabilize for large times.
This leads to an extra contribution to the observed
wavespeed, which we refer to as an orbital drift.
The second term in \sref{eq:mr:def:c:sigma:lim}
describes the leading order contribution to this orbital drift.

\subsection{Example}
\label{sec:mr:ex}

In order to illustrate our results, let us consider the
FitzHugh-Nagumo system
\begin{align}
\label{eq:example:SPDE:fhn}\begin{split}
dU &= \big[ U_{xx} + f_{\mathrm{cub}}(U) -W ] dt
+ \sigma  g^{(u)}( U)  d \beta_t,
\\[0.2cm]
dW &= \big[ \varrho V_{xx} + \e(U-\gamma W)] dt
\end{split}
\end{align}
in a parameter regime where (HDt), (HSt) and (HTw) all hold. 
We write $\Phi_0=(\Phi^{(u)}_0,\Phi^{(w)}_0)$ 
for the deterministic wave defined in (HTw)
and recall the associated linear operator
$\mathcal{L}_{\mathrm{tw}}: H^2(\Real ;\Real^2) \to L^2(\Real; \Real^2)$
that acts as
\begin{equation}
\begin{array}{lcl}
\mathcal{L}_{\mathrm{tw}}
 & = & \left(
     \begin{array}{cc}
         \partial_{\xi \xi} + c_0 \partial_\xi + f'_{\mathrm{cub}}(\Phi_0^{(u)}) & - 1
         \\[0.2cm]
         \e  & \varrho \partial_{\xi \xi} + c_0 \partial_\xi   - \e \gamma
         \\[0.2cm]
     \end{array}
 \right) .
\end{array}
\end{equation}
The adjoint operator acts as
\begin{equation}
\begin{array}{lcl}
\mathcal{L}^*_{\mathrm{tw}}
 & = & \left(
     \begin{array}{cc}
         \partial_{\xi \xi} - c_0 \partial_\xi + f'_{\mathrm{cub}}(\Phi_0^{(u)}) & \e
         \\[0.2cm]
         -1  & \varrho \partial_{\xi \xi} - c_0 \partial_\xi   - \e \gamma
         \\[0.2cm]
     \end{array}
 \right)
\end{array}
\end{equation}
and admits the eigenfunction
$\psi_{\mathrm{tw}}=(\psi^{(u)}_{\mathrm{tw}},\psi^{(w)}_{\mathrm{tw}})$
that can be normalized in such a way that
\begin{equation}
\label{eq:mr:ex:norm:adj}
\langle \partial_\xi \Phi_0 , \psi_{\mathrm{tw}} \rangle_{L^2(\Real; \Real^2)} = 1.
\end{equation}
To summarize, we have
\begin{equation}
\mathcal{L}_{\mathrm{tw}} \partial_\xi ( \Phi^{(u)}_0, \Phi^{(w)}_0 )^T = 0,
\qquad \qquad
\mathcal{L}^*_{\mathrm{tw}}  ( \psi^{(u)}_{\mathrm{tw}}, \psi^{(w)}_{\mathrm{tw}})^T = 0.
\end{equation}

\begin{figure}
    \centering
        \begin{subfigure}{0.5\textwidth}
 		\def\svgwidth{\columnwidth}
    		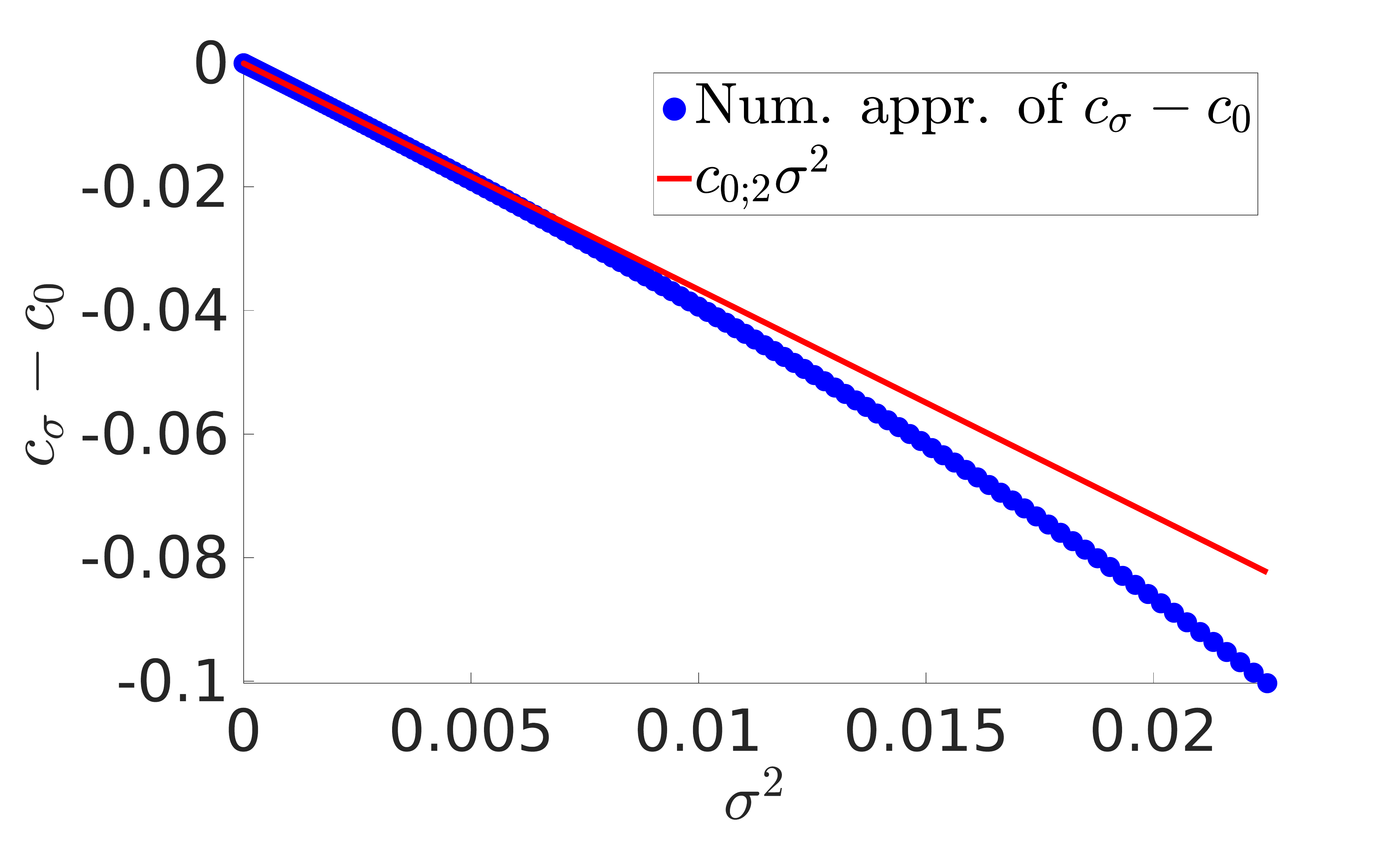
        \caption{}
    \end{subfigure}%
    \begin{subfigure}{0.5\textwidth}
 		\def\svgwidth{\columnwidth}
    		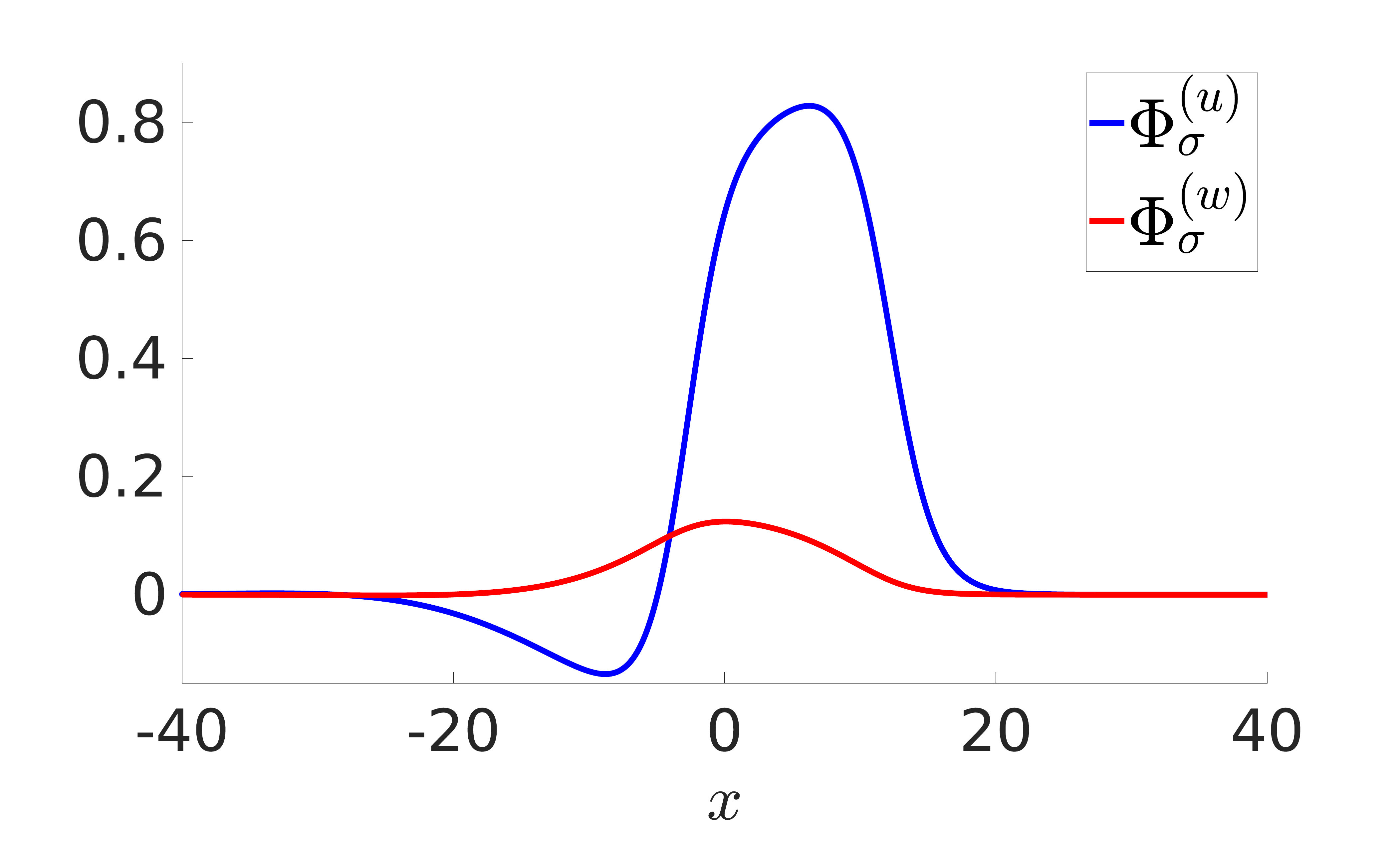
        \caption{}
    \end{subfigure}
    \caption{Numerical results for the solution $(\Phi_\s,c_\s)$ to equation \sref{eq:example:TWode}. Figure (a) shows the numerical approximation of $c_\s-c_0$ and the first order approximation of this difference. We chose $g^{(u)}(u)=u$ with parameters $a=0.1$, $\varrho=0.01$, $\e=0.01$, $\gamma=5$.
    Using \sref{eq:mr:melnikov:cop:c:zero:2}
    we numerically computed $c_{0;2}=-3.66$. Figure (b) shows the two components of $\Phi_\s$ for $\s=0.15$ for the same parameter values. On the scale of this figure they are almost identical to $\Phi_0$. }
    \label{fig:cs}
\end{figure}

Upon writing $\Phi_\s=(\Phi^{(u)}_\s,\Phi^{(w)}_\s)$,
the stochastic wave equation $a_{\sigma}(\Phi_\s, c_\s, \psi_{\mathrm{tw}} ) = 0$
can be written as
\begin{equation}
\begin{array}{lcl}
\label{eq:example:TWode}
 - c_\s \p_{x}\Phi^{(u)}_\s & = &
   \left(1+\frac{\s^2}{2}\tilde{b}(\Phi_\s)^2\right)\p_{xx}\Phi^{(u)}_\s  + f_{\mathrm{cub}}(\Phi^{(u)}_\s) -\Phi^{(w)}_\s
     +\s^2\tilde{b}(\Phi_\s)\p_x[ g^{(u)}(\Phi^{(u)}_\s)] ,
     \\[0.2cm]
 - c_\s \p_{x}\Phi^{(w)}_\s & = &
  \left(\varrho +\frac{\s^2}{2}\tilde{b}(\Phi_\s)^2\right)\p_{xx}\Phi^{(w)}_\s
      + \e(\Phi^{(u)}_\s-\gamma \Phi^{(w)}_\s) ,
\end{array}
\end{equation}
where $\tilde{b}$ is given by
\begin{align}
    \tilde{b}(\Phi_\s)=-\frac{\ip{  g^{(u)} (\Phi^{(u)}_\s ),\psi^{(u)}_{\mathrm{tw}}}_{L^2(\Real;\Real)}}
                     {\ip{\p_x\Phi_\s,\psi_{\mathrm{tw}}}_{L^2(\Real;\Real^2)}}.
\end{align}
We now introduce the expansions
\begin{equation}
\Phi_\s=\Phi_0+\s^2\Phi_{0;2}+\O(\s^4),
\qquad \qquad
c_\s = c_0 + \s^2 c_{0;2} + \O(\s^4)
\end{equation}
with $\Phi_{0;2} = \big( \Phi^{(u)}_{0; 2} , \Phi^{(w)}_{0; 2} \big)$.
Substituting these expressions into \sref{eq:example:TWode}
and balancing the second order terms, we find
\begin{equation}
\begin{array}{lcl}
 - c_{0;2} \p_{x}\Phi^{(u)}_0 - c_{0} \p_{x}\Phi^{(u)}_{0;2}& = &
   \p_{xx}\Phi^{(u)}_{0;2} + \frac{1}{2} \tilde{b}(\Phi_0)^2 \p_{xx} \Phi^{(u)}_0
     + f'_{\mathrm{cub}}(\Phi^{(u)}_0) \Phi^{(u)}_{0;2} -\Phi^{(w)}_{0;2}
    \\[0.2cm]
    & & \qquad
     +\tilde{b}(\Phi_0)\p_x g^{(u)}(\Phi^{(u)}_0 ) ,
     \\[0.2cm]
 - c_{0;2} \p_{x}\Phi^{(w)}_0  - c_{0} \p_{x}\Phi^{(w)}_{0;2}& = &
   \varrho \p_{xx} \Phi^{(w)}_{0;2}
   + \frac{1}{2} \tilde{b}(\Phi_0)^2 \p_{xx} \Phi^{(w)}_{0}
     + \e(\Phi^{(u)}_{0;2}-\gamma \Phi^{(w)}_{0;2}) ,
\end{array}
\end{equation}
which can be rephrased as
\begin{equation}
\begin{array}{lcl}
\mathcal{L}_{\mathrm{tw}} \Phi_{0;2}
& = &
  - c_{0;2} \p_{x} \Phi_0  - \frac{1}{2} \tilde{b}(\Phi_0)^2 \p_{\xi \xi} \Phi_0
   - \tilde{b} (\Phi_0) \big( \p_x g^{(u)}( \Phi^{(u)}_0 ) , 0 \big)^T .
\end{array}
\end{equation}
Using the normalization \sref{eq:mr:ex:norm:adj}
together with the fact that $\langle \psi_{\mathrm{tw}}, \mathcal{L}_{\mathrm{tw}} \Phi_{0;2} \rangle_{L^2(\Real;\Real^2)} = 0$,
we find the explicit expression
\begin{equation}
\label{eq:mr:melnikov:cop:c:zero:2}
c_{0;2} = - \frac{1}{2} \tilde{b}(\Phi_0)^2 \langle \p_{\xi \xi} \Phi_0 , \psi_{\mathrm{tw}} \rangle_{L^2(\Real ; \Real^2) }
 - \tilde{b}(\Phi_0) \langle \p_x g^{(u)} ( \Phi^{(u)}_0 ) , \psi^{(u)}_{\mathrm{tw}} \rangle_{L^2(\Real ;\Real)}
\end{equation}
for the coefficient that governs the leading order behavior of 
$c_{\s} - c_0$.
In Figure \ref{fig:cs} we show numerically
that $c_{0;2} \s^2$ indeed corresponds well with $c_{\s} - c_0$
for small values of $\sigma^2$.

In Figure \ref{fig:UDifRef} we illustrate the behavior 
of a representative sample solution to \sref{eq:example:SPDE:fhn}
by plotting it in three different moving frames.
Figure \ref{fig:UDifREfa} clearly shows that the deterministic
speed $c_0$ overestimates the actual speed as the wave 
moves to the left. The situation is improved
in Figure \ref{fig:UDifREfb}, where we use a frame
that travels with the stochastic speed $c_{\sigma}$.
However, the position of the wave now fluctuates
around a position that still moves slowly to the left
as a consequence of the orbital drift. This is remedied
in Figure \ref{fig:UDifREfc} where we use the full stochastic
phase $\Gamma(t)$. Indeed, the wave now appears to be at
a fixed position, but naturally still experiences fluctuations in its shape. This shows that $\Gamma(t)$ is indeed a powerful tool
to characterize the position of the wave.

\begin{figure}
\centering
\begin{subfigure}{.33\textwidth}
  \centering
 		\def\svgwidth{\columnwidth}
    		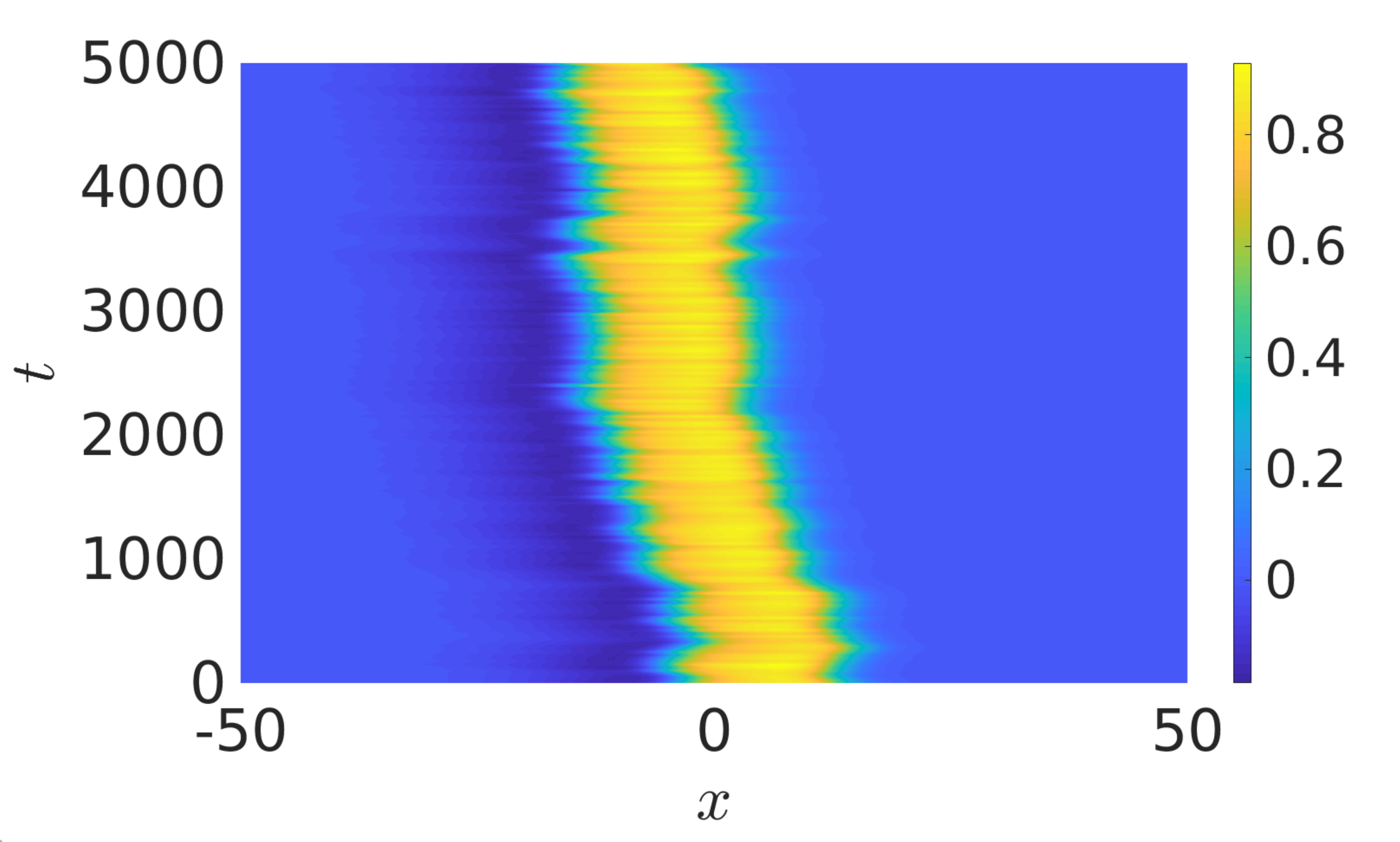
  \caption{$U(\cdot +c_0t,t)$}
    \label{fig:UDifREfa}
\end{subfigure}%
\begin{subfigure}{.33\textwidth}
  \centering
 		\def\svgwidth{\columnwidth}
    		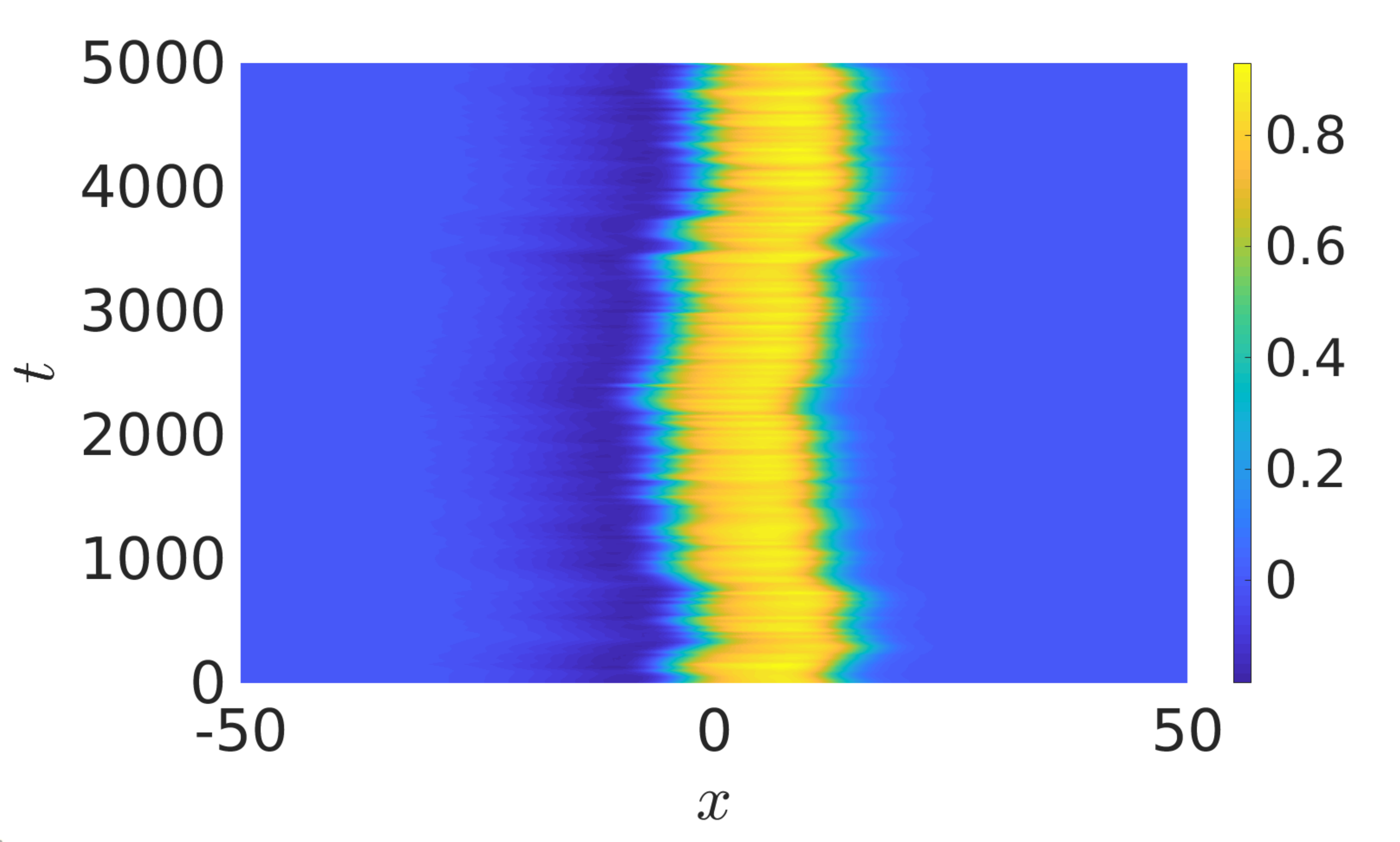
  \caption{$U(\cdot +c_\sigma t,t)$}
      \label{fig:UDifREfb}
\end{subfigure}%
\begin{subfigure}{.33\textwidth}
  \centering
 		\def\svgwidth{\columnwidth}
    		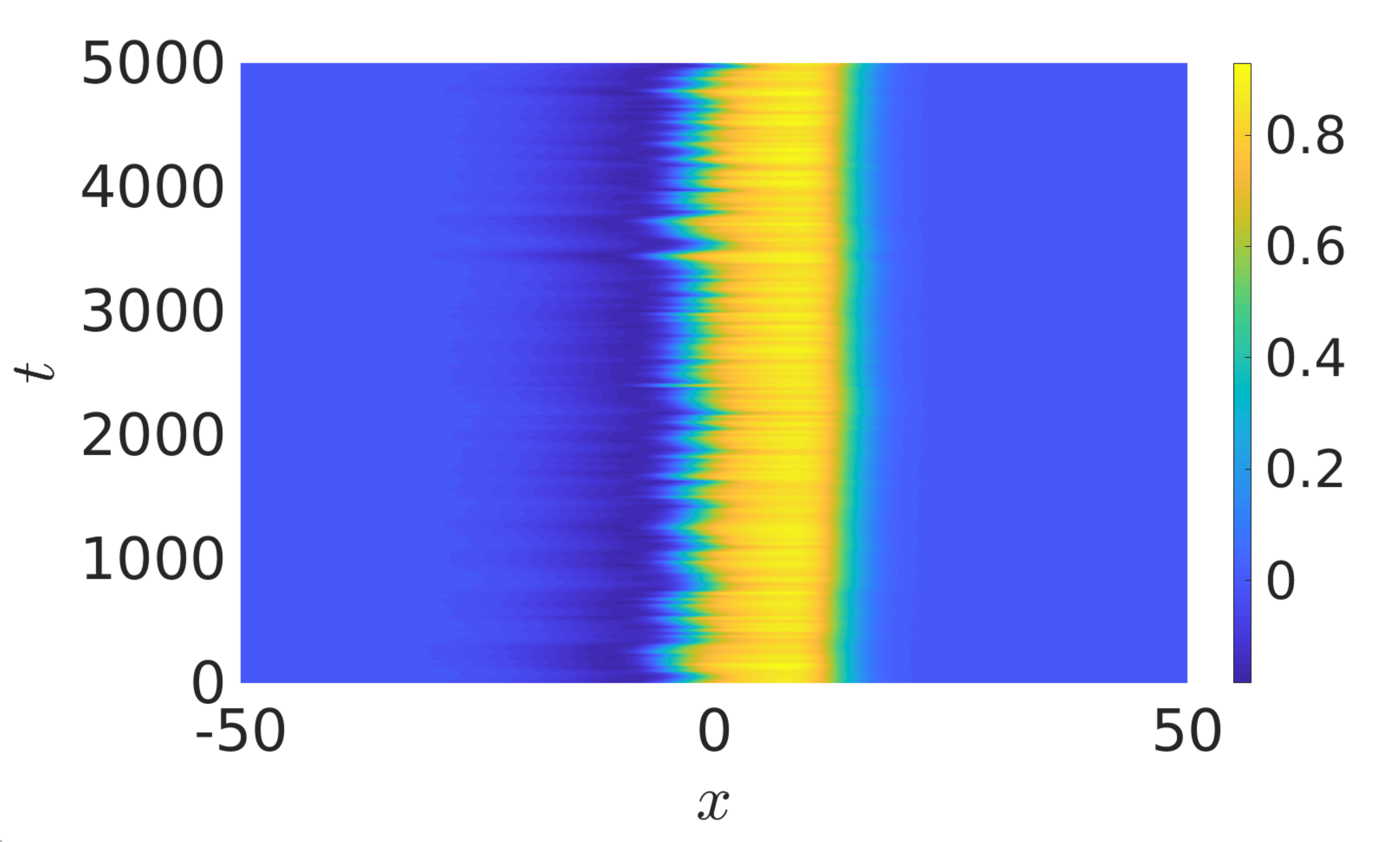
  \caption{$U(\cdot +\Gamma(t),t)$}
        \label{fig:UDifREfc}
\end{subfigure}
\caption{A single realization of the $U$-component of  \sref{eq:example:SPDE:fhn} with initial condition $\Phi_\s$ in 3 different reference frames. We chose $g^{(u)}(u)=u$ with parameters $a=0.1$, $\sigma=0.03$, $\varrho=0.01$, $\e=0.01$, $\gamma=5$.}
\label{fig:UDifRef}
\end{figure}

In order to study the orbital drift
mentioned above,
we split the semigroup $S(t)$ generated by $\mathcal{L}_{\mathrm{tw}}$ into its components
\begin{equation}
    S(t) = \left( \begin{array}{cc}
       S^{(uu)}(t) & S^{(uw)}(t) \\[0.2cm]
       S^{(wu)}(t) & S^{(ww)}(t)
    \end{array} \right)
\end{equation}
and introduce the expression
\begin{equation}
\begin{array}{lcl}
    \mathcal{I}(s) &  = & 
    S^{(uu)}(s) g^{(u)}(\Phi_0 ) 
    + \tilde{b}(\Phi_{0}) S^{(uu)} \partial_\xi \Phi^{(u)}_{0} 
    + \tilde{b}(\Phi_{0}) S^{(uw)} \partial_\xi \Phi^{(w)}_{0} ,
\end{array}
\end{equation}
together with
\begin{equation}
\label{eq:mr:c:zero:od}
c^{\mathrm{od}}_{0;2}
= -\frac{1}{2}\int_0^\infty
\ip{f''_{\mathrm{cub}}(\Phi_0^{(u)})
  \mathcal{I}(s)^2, \psi^{(u)}_{\mathrm{tw}}}_{L^2} \, ds.
\end{equation}
This last quantity is in fact the leading order term
in the Taylor expansion of 
\sref{eq:mr:def:c:orb:drift},
which means that
\begin{equation}
\begin{array}{lcl}
c^{\mathrm{od}}_{\sigma;2}
= c^{\mathrm{od}}_{0;2}
+ O(\sigma^2) .
\end{array}
\end{equation}
In particular, we see that
\begin{equation}
c^{(2)}_{\sigma;\lim} = c_0 + \sigma^2 \big[ c_{0;2} + c^{\mathrm{od}}_{0;2} \big]
+ O(\sigma^3) ,
\end{equation}
which means that we have explicitly identified 
the leading order correction to the full limiting wavespeed.

To validate our prediction for the size of the orbital drift,
we first approximated
$E[\Gamma(t)-c_\s t]$ numerically by performing an average over 
a set of numerical simulations. In fact, to speed
up the convergence rate, we first
subtracted the term $\Gamma_{\sigma;1}(t)$
defined in \sref{eq:gamma:sigma:1} from each simulation,
using the same realization of the
Brownian motion that was used to generate the path for $(U,W)$.
The results can be found in Figure \ref{fig:Difta}.

\begin{figure}
\centering
\begin{subfigure}{.5\textwidth}
  \centering
 		\def\svgwidth{\columnwidth}
    		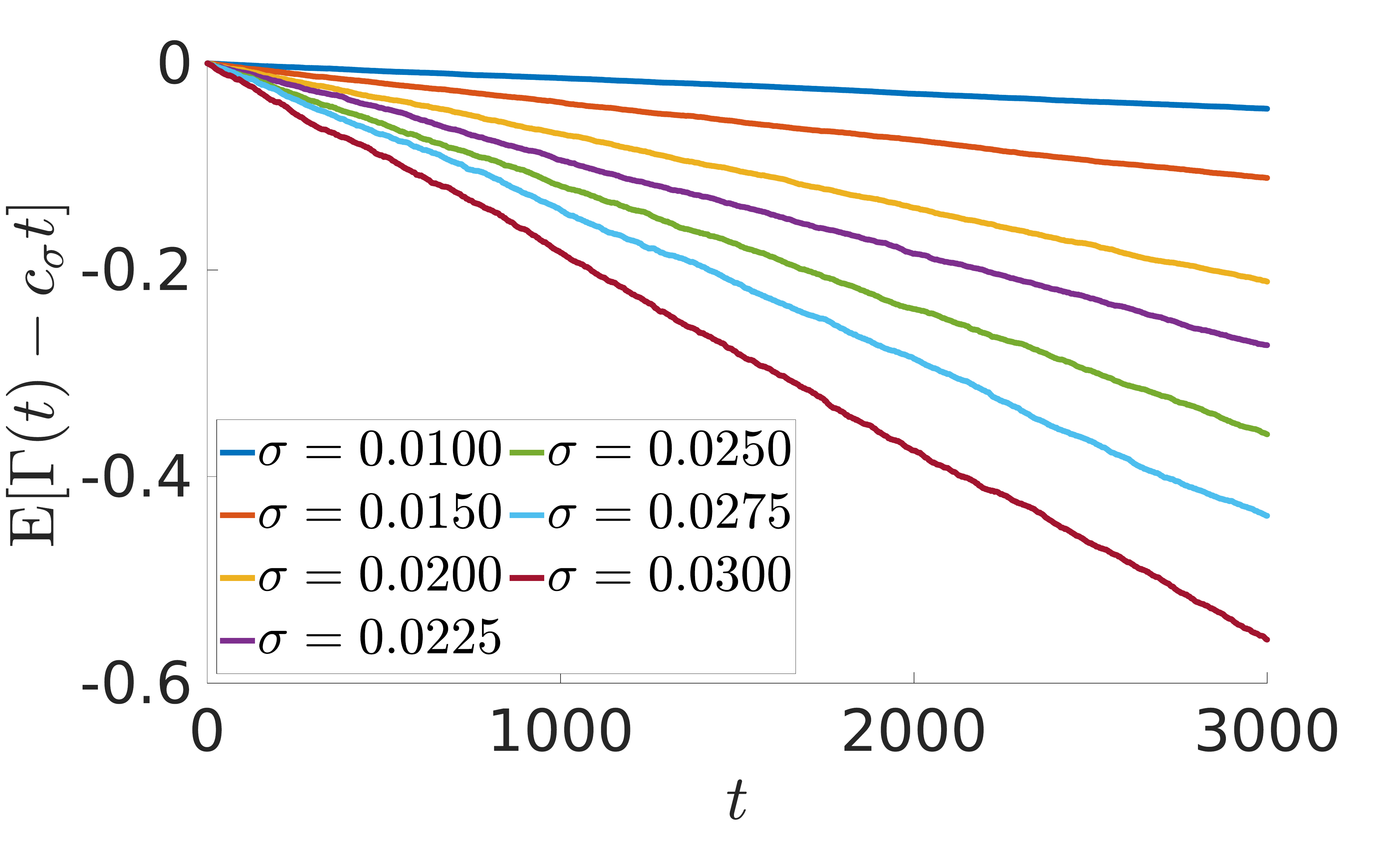
  \caption{}
  \label{fig:Difta}
\end{subfigure}%
\begin{subfigure}{.5\textwidth}
  \centering
 		\def\svgwidth{\columnwidth}
    		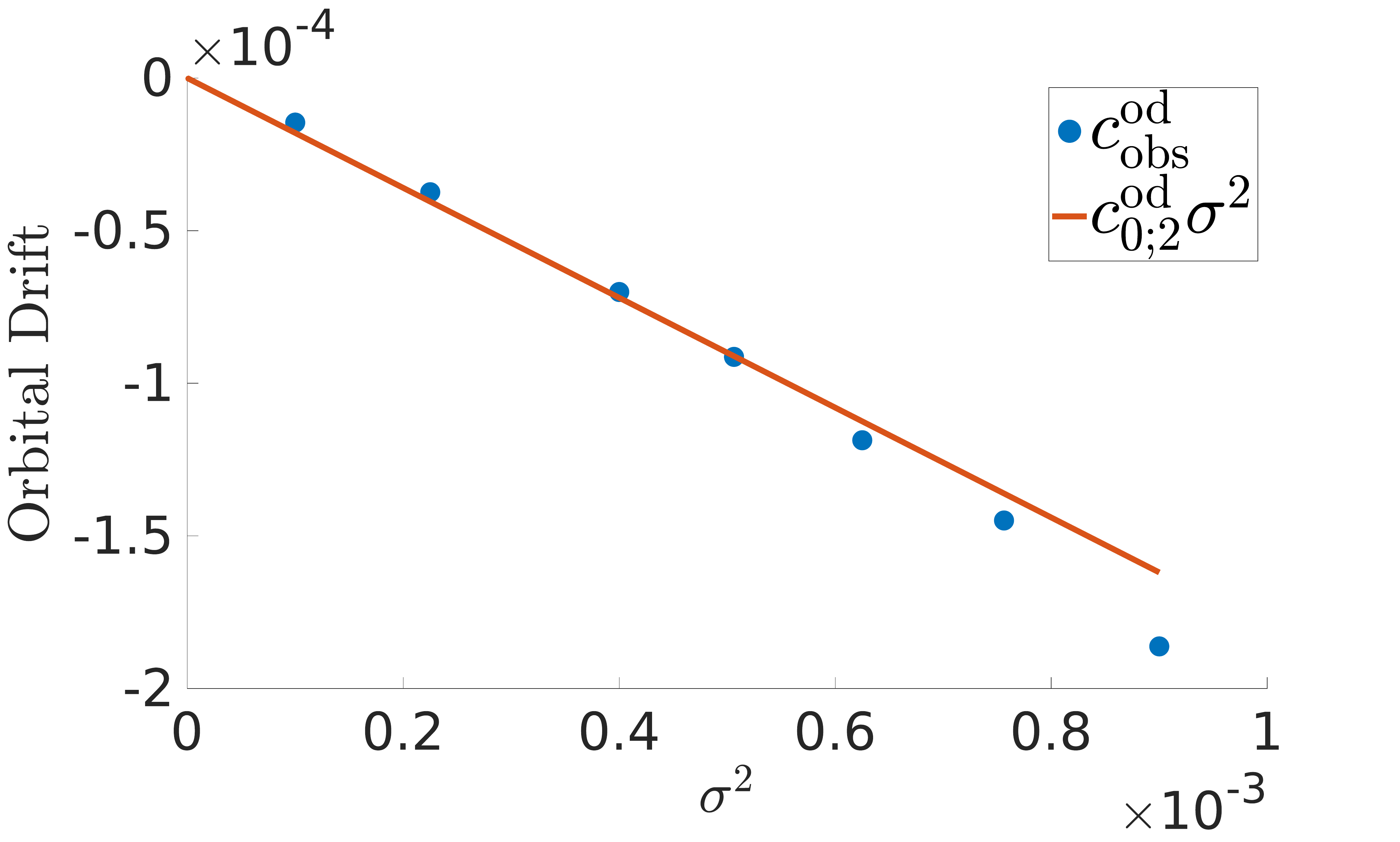
  \caption{}
    \label{fig:Diftb}
\end{subfigure}
\caption{In (a) we computed the average $E[\Gamma(t)-c_\s t]$
over 1000 simulations of \sref{eq:example:SPDE:fhn}, using
the procedure described in the main text for several values of $\s$.
Notice that a clear trend is visible. In (b) 
we computed the corresponding orbital drift
by evaluating the average \sref{eq:mr:def:c:od:obs}
for the data in (a).
Observe that there is a reasonable match with the predicted values $c_{0;2}^{\mathrm{od}} \sigma^2$. 
We chose $g^{(u)}(u)=u$ with parameters $a=0.1$, $\varrho=0.01$, $\e=0.01$, $\gamma=5$.
We used the value $c^{\mathrm{od}}_{0;2}=-0.18$,
which was found by evaluating \sref{eq:mr:c:zero:od} numerically.
}
\label{fig:Drift}
\end{figure}

In order to eliminate any transients from the data,
we subsequently numerically computed the
quantity
\begin{align}
\label{eq:mr:def:c:od:obs}
 c^{\mathrm{od}}_{\mathrm{obs}} 
 =\frac{2}{T} \int_{\frac{T}{2}}^T \frac{1}{t}E[\Gamma(t)-c_\s t]dt .
\end{align}
This corresponds with the average slope of
the data in  Figure \ref{fig:Difta} on the interval $[T/2, T]$,
which is a useful proxy for the observed orbital drift.
Figure \ref{fig:Diftb} shows that these 
quantities are well-approximated by 
our leading order expression $\sigma^2 c_{0;2}^{\mathrm{od}}$.


\section{Structure of the semigroup}
\label{sec:SplitSem}

In this section we analyze the analytic semigroup $S(t)$ generated by the linear operator
$\mathcal{L}_\mathrm{tw}$, focusing specially on its off-diagonal elements.
Assumption (HTw) implies that $\mathcal{L}_\mathrm{tw}$ has a spectral gap,
which is essential for our computations. In order to exploit this, we introduce the
maps $P: L^2 \to L^2$ and $Q: L^2 \to L^2$ that act as
\begin{equation}
P v = \langle v , \psi_{\mathrm{tw}} \rangle_{L^2} \Phi_0',
\qquad \qquad
Q v = v - P v  .
\end{equation}
We also introduce the suggestive notation $P_\xi \in \mathcal{L}(L^2; L^2)$
to refer to the map
\begin{equation}
\label{eq:spl:def:p:x}
P_\xi v = - \langle v , \partial_\xi \psi_{\mathrm{tw}} \rangle_{L^2} \Phi_0',
\end{equation}
noting that $P_\xi v = P \partial_\xi v $ whenever $v \in H^1$.
These projections
enable us to remove the simple eigenvalue at the origin
and obtain the following bounds.

\begin{lem}[see \cite{lorenzi2004analytic}]
\label{lem:nls:sem:group:decay}
Assume that (HDt) and (HTw) hold.
Then $\mathcal{L}_{\mathrm{tw}}$ generates an analytic
semigroup semigroup $S(t)$
and there exists a constant $M\geq 1$ for which
we have the bounds
\begin{equation}
\begin{array}{lclcl}
\nrm{S(t)Q}_{\L(L^2;L^2)}&\leq& M e^{-\b t},
  && 0<t<\infty ,
  \\[0.2cm]
\nrm{S(t)Q}_{\L(L^2;H^1)}&\leq& M t^{-\frac{1}{2}} ,
 &&  0<t\leq 2 ,
\\[0.2cm]
\nrm{S(t)P}_{\L(L^2;H^2)}
+
\nrm{S(t)P_\xi}_{\L(L^2;H^2)}
+ \nrm{S(t)\partial_\xi P }_{\mathcal{L}(L^2; H^2)}
&\leq& M , & & 0<t\leq 2 ,
\\[0.2cm]
\nrm{S(t)Q}_{\L(L^2;H^2)}&\leq& M e^{-\b t},
  & & t\geq 1,
\\[0.2cm]
\norm{[\mathcal{L}_\mathrm{tw} - \rho \partial_{\xi\xi}] S(t)Q}_{\L(L^2;L^2) }
  & \le & M t^{-\frac{1}{2}},
    & &  0 < t\leq 2,
\\[0.2cm]
\norm{[\mathcal{L}^*_\mathrm{tw} - \rho \partial_{\xi\xi}] S(t)Q }_{\L(L^2;L^2) }
  & \le & M t^{-\frac{1}{2}},
   & & 0 < t\leq 2 .
\end{array}
\end{equation}
\end{lem}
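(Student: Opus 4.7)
The plan is to combine standard analytic-semigroup theory with the spectral information supplied by (HTw). First I would verify that $\mathcal{L}_{\mathrm{tw}}$ is sectorial on $L^2$: since $\rho$ is positive-diagonal, the principal part $\rho\p_{\xi\xi}$ is sectorial with resolvent bounds on a half-plane, while the lower-order terms $c_0\p_\xi + Df(\Phi_0)$ are relatively bounded with relative bound zero (using the interpolation $\|v'\|_{L^2}^2 \le \|v\|_{L^2}\|v''\|_{L^2}$ together with boundedness of $Df(\Phi_0)$). Hence $\mathcal{L}_{\mathrm{tw}}$ generates an analytic semigroup $S(t)$ on $L^2$. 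Since $\Phi_0'$ lies in $\ker\mathcal{L}_{\mathrm{tw}}$ (by differentiating \sref{eq:MR:TWODE}), $\psi_{\mathrm{tw}} \in \ker\mathcal{L}_{\mathrm{tw}}^*$, and the normalization \sref{eq:mr:hs:norm:cnd:psitw} holds, the projection $P$ coincides with the Riesz projection associated to the simple eigenvalue at $\lambda=0$. Consequently $P$ and $Q=I-P$ commute with $\mathcal{L}_{\mathrm{tw}}$ and with $S(t)$, and the restriction of $\mathcal{L}_{\mathrm{tw}}$ to $QL^2$ has spectrum contained in $\{\Re\lambda < -2\b\}$.

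The bounds involving $P$ and $P_\xi$ are essentially finite-dimensional. Bootstrapping \sref{eq:MR:TWODE} using $f \in C^3$ shows that $\Phi_0$ is at least $C^4$-smooth, while the exponential approach to $u_\pm$ postulated in (HTw) propagates through the ODE to give exponential decay of all derivatives $\Phi_0^{(k)}$ for $k \ge 1$, so in particular $\Phi_0', \Phi_0'' \in H^2$. Since $\mathcal{L}_{\mathrm{tw}}\Phi_0' = 0$ we have $S(t)\Phi_0' = \Phi_0'$, hence $S(t)P = P$ and $S(t)P_\xi = P_\xi$, both of which are bounded from $L^2$ to $H^2$ directly from their defining formulas. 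For $\p_\xi P$, a direct computation gives $\p_\xi P v = \langle v, \psi_{\mathrm{tw}}\rangle_{L^2}\Phi_0''$, so $S(t)\p_\xi P v = \langle v, \psi_{\mathrm{tw}}\rangle_{L^2} S(t)\Phi_0''$; since $\Phi_0'' \in H^2 = D(\mathcal{L}_{\mathrm{tw}})$ is invariant under $S(t)$ with uniform bound on $t \in [0,2]$, the claimed estimate follows.

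For the $Q$-part I would exploit the spectral gap. Standard results for sectorial operators whose spectral bound is strictly less than $-2\b$ (as collected in \cite{lorenzi2004analytic}) yield $\|S(t)Q\|_{\L(L^2;L^2)} \le Me^{-\b t}$ for all $t>0$, together with the short-time smoothing $\|S(t)Q\|_{\L(L^2;D((-\mathcal{L}_{\mathrm{tw}}|_{QL^2})^{1/2}))} \le Ct^{-1/2}$. The main technical step, which I regard as the principal obstacle and which is precisely where the diagonal positive structure of $\rho$ enters, is the identification of this fractional-power domain with $H^1 \cap QL^2$ (with equivalent norms); this is a standard fact for uniformly strongly elliptic systems but needs to be invoked carefully. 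Once this identification is in hand, the $L^2 \to H^1$ smoothing bound is immediate. For $t \ge 1$ I would use commutativity of $Q$ with $S$ to write $S(t)Q = S(1)Q \cdot S(t-1)Q$ and combine the $L^2 \to H^2$ bound on $[0,1]$ with exponential decay on $[1,t]$, obtaining $\|S(t)Q\|_{\L(L^2;H^2)} \le Me^{-\b t}$. Finally, $\mathcal{L}_{\mathrm{tw}} - \rho\p_{\xi\xi} = c_0\p_\xi + Df(\Phi_0)$ is bounded as $H^1 \to L^2$, and the same holds for $\mathcal{L}_{\mathrm{tw}}^* - \rho\p_{\xi\xi} = -c_0\p_\xi + Df(\Phi_0)^*$; composing with $S(t)Q$ and invoking the $L^2 \to H^1$ smoothing yields the last two bounds of the lemma.
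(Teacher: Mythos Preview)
Your proposal is correct and follows essentially the same route as the paper. The paper's proof is extremely terse---it simply observes that $\rho\partial_{\xi\xi}$ generates $n$ independent heat semigroups, invokes \cite[Prop.~4.1.4]{lorenzi2004analytic} for analyticity (the perturbation argument you spell out), cites \cite[Prop.~5.2.1]{lorenzi2004analytic} for the decay and smoothing bounds, and notes that $\Phi_0'\in H^3$---while you have unpacked what those citations actually contain, including the spectral-projection identification, the rank-one computations for $P$, $P_\xi$, $\partial_\xi P$, and the fractional-domain characterisation $D((-\mathcal{L}_{\mathrm{tw}}|_{QL^2})^{1/2})\simeq H^1\cap QL^2$ that you rightly flag as the main technical input.
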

\begin{proof}
Since $\rho \partial_{\xi\xi}$ generates $n$ independent heat-semigroups,
the analyticity of the semigroup $S(t)$
can be obtained from \cite[Prop 4.1.4]{lorenzi2004analytic};
see also \cite[Prop 6.3.vi]{Hamster2017}. The desired bounds
follow from \cite[Prop 5.2.1]{lorenzi2004analytic}
together with the fact that $\Phi_0' \in H^3$.
\end{proof}

In {\S}\ref{sec:STT} we will show that the function $V(t)$ defined in \sref{eq:MR:DefV}
satisfies an SPDE that involves nonlinear terms containing second order derivatives.
The short-term bounds above are too crude to handle such terms as they lead to divergences
in the integrals governing short-time regularity. In addition, the variational
framework in \cite{LiuRockner} only provides control on the $H^1$-norm of $V$.

In order to circumvent the first issue, we
introduce the representation
\begin{equation}
S(t) v = \left( \begin{array}{ccc}
 S_{11}(t) & \ldots & S_{1n}(t) \\ \vdots & \ddots & \vdots \\
 S_{n1}(t) & \ldots  & S_{nn}(t)
 \end{array} \right) \left( \begin{array}{c} v_1 \\ \vdots \\ v_n \end{array} \right)
\end{equation}
with operators $S_{ij}(t) \in \mathcal{L}\big( L^2(\Real ; \Real) ; L^2(\Real; \Real) \big)$.
Upon writing
\begin{equation}
S_{\mathrm{d}}(t) = \mathrm{diag}\big( S_{11}(t) , \ldots, S_{nn}(t) \big)
\end{equation}
this allows us to make the splitting
\begin{equation}
S(t) = S_{\mathrm{d}}(t) + S_{\mathrm{od}}(t).
\end{equation}
Our main result below shows that the off-diagonal terms $S_{\mathrm{od}}(t)$
have better short-term bounds
than the original semigroup.


The second issue can be addressed by introducing the commutator
\begin{equation}
\Lambda(t) = [S(t) Q, \p_{\xi}] = S(t) Q \p_{\xi} - \p_{\xi} S(t) Q
\end{equation}
that initially acts on $H^1$. In fact, we show that this commutator
can be extended to $L^2$ in a natural fashion
and that it has better short-time bounds than $S(t)$.
Upon writing
\begin{equation}
\label{eq:spl:eqn:with:comm}
S(t) \p_{\xi} v = S(t) Q  \p_{\xi} v + S(t) P_\xi v = \p_{\xi} S(t) Q v + \Lambda(t) v + S(t) P_\xi v,
\end{equation}
we hence see that the right-hand side of this identity
is well-defined for $v \in L^2$.
In {\S}\ref{sec:STT} this observation will allow us to give a mild interpretation
of the SPDE satisfied by $V(t)$ posed on the space $H^1$.

\begin{prop}\label{prp:MainPropSplit}
Suppose that (HDt) and (HTw) are
satisfied. Then the operator $\Lambda(t)$ can be extended to $L^2$ for each $t \ge 0$.
In addition, there is a constant $M > 0$ so that the short-term bound
\begin{equation}
\label{eq:MainPropSpliti}
\nrm{\Lambda(t)}_{L^2\to H^2}
  +\nrm{S_{\mathrm{od}}(t)}_{L^2\to H^2}\leq M
\end{equation}
holds for $0 < t \le 1$,
while the long-term bound
\begin{equation}
\label{eq:MainPropSplitii}
\nrm{\Lambda(t)}_{L^2\to H^2}\leq Me^{-\b t}
\end{equation}
holds for $t \ge 1$.
\end{prop}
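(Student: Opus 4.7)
The strategy is to exploit the fact that $\L_\mathrm{tw}$ and $\p_\xi$ almost commute. Indeed, a direct calculation using (HDt) and the exponential localization of $\Phi_0'$ (supplied by (HTw)) shows that, for $v \in H^1$,
\begin{equation*}
[\p_\xi, \L_\mathrm{tw}] v = \Theta(\xi) v, \qquad \Theta(\xi) := D^2 f(\Phi_0(\xi))\Phi_0'(\xi),
\end{equation*}
where $\Theta$ is a bounded multiplication operator that acts continuously on both $L^2$ and $H^1$. This single identity is the engine behind the Duhamel representations that I will use for both $\Lambda(t)$ and $S_\mathrm{od}(t)$; its role is to transfer a derivative across $\L_\mathrm{tw}$ at the cost of a harmless lower-order term, making it possible to split the required $H^2$-smoothing between two semigroup factors, each contributing a square-root integrable singularity.

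For the off-diagonal block $S_\mathrm{od}(t)$, I would write $\L_\mathrm{tw} = A + B$ with $A := \rho \p_{\xi\xi} + c_0 \p_\xi$ diagonal and $B(\xi) := Df(\Phi_0(\xi))$ a bounded multiplication matrix, and let $T(t) := e^{tA}$ denote the corresponding strictly diagonal heat-drift semigroup. Variation of parameters gives
\begin{equation*}
S(t) - T(t) = \int_0^t S(t-s) B T(s) \, ds,
\end{equation*}
and since $T(t)$ is diagonal we have $S_\mathrm{od}(t) = [S(t) - T(t)]_\mathrm{od}$. Using the heat bound $\nrm{T(s)}_{\L(L^2;H^1)} \lesssim s^{-1/2}$, the boundedness of $B$ on $H^1$, and the analytic-semigroup smoothing $\nrm{S(t-s)}_{\L(H^1;H^2)} \lesssim (t-s)^{-1/2}$ (which follows from $D(\L_\mathrm{tw}) = H^2$ and standard interpolation), the integrand is dominated by $(t-s)^{-1/2} s^{-1/2}$, whose integral on $[0,t]$ is the finite beta-function value $\pi$. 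This yields the $L^2 \to H^2$ bound for $S_\mathrm{od}$ uniformly in $t \in (0,1]$.

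For $\Lambda(t)$, the same commutator identity leads to the ODE $\tfrac{d}{dt}\Lambda(t) v = \L_\mathrm{tw} \Lambda(t) v - \Theta S(t) Q v$ on $H^1$, so variation of parameters produces
\begin{equation*}
\Lambda(t) v = S(t) \Lambda(0) v - \int_0^t S(t-s) \Theta S(s) Q v \, ds.
\end{equation*}
The initial value can be computed explicitly: using $P\p_\xi v = P_\xi v$ and $\p_\xi P v = \ip{v, \psi_\mathrm{tw}}_{L^2}\Phi_0''$, one obtains $\Lambda(0) v = -P_\xi v + \ip{v, \psi_\mathrm{tw}}_{L^2}\Phi_0''$, a rank-two operator that extends at once to a bounded map $L^2 \to H^2$ since both $\Phi_0'$ and $\Phi_0''$ lie in $H^2$. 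This gives the desired extension of $\Lambda(t)$ to $L^2$, and, combined with the uniform $H^2 \to H^2$ bound on $S(t)$ for $t \in [0,1]$ and the same $(t-s)^{-1/2} s^{-1/2}$ splitting used above applied now to the integral term, yields the short-time half of \sref{eq:MainPropSpliti}.

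For the long-time bound \sref{eq:MainPropSplitii} I would separate $\Lambda(t) = P\Lambda(t) + Q\Lambda(t)$. Using $S(t)P = P$ and $PQ = 0$, the $P$-component collapses to the clean identity $P\Lambda(t) v = -P_\xi S(t) Q v$, which inherits the exponential decay $e^{-\b t}$ from Lemma \ref{lem:nls:sem:group:decay}. For $Q\Lambda(t)$, applying $Q$ to the Duhamel formula yields an integral $\int_0^t S(t-s) Q \Theta S(s) Q v \, ds$ that I would estimate by splitting at $t/2$: on $[t/2,t]$ the factor $S(s)Q$ is already exponentially small in $\L(L^2;H^2)$ while $S(t-s)Q$ stays uniformly bounded on $H^2$ for small $t-s$; on $[0,t/2]$ the factor $S(t-s)Q$ is exponentially small in $\L(L^2;H^2)$ while $\Theta S(s)Q v$ remains uniformly bounded in $L^2$. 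Both halves contribute $O(e^{-\b t/2})$, which (after rescaling $\b$) delivers the claim. The main technical obstacle is precisely the coupling in this last step: since $\Theta S(s) Q v$ need not lie in $QL^2$, the exponentially decaying semigroup $S(\cdot)Q$ cannot be factored off cleanly from either side of the Duhamel integrand, and the explicit identification of $P\Lambda(t)$ above is what rescues the remaining $P$-$Q$ bookkeeping.
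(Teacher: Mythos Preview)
Your approach is correct and takes a genuinely different route from the paper. The paper works entirely in the resolvent picture: it computes the commutator $[R(\L_{\mathrm{tw}},\l)Q,\p_\xi]$ explicitly (Lemma~\ref{lem:split:res}), represents $\Lambda(t)$ as a contour integral of this expression, and reads off both the short- and long-time bounds by estimating the integrand on the contour via Lemmas~\ref{lem:spl:est:from:k}--\ref{lem:spl:exp:est:for:bnd:itg}; the off-diagonal block $S_{\mathrm{od}}(t)$ is handled the same way through the second resolvent identity $R(\L_{\mathrm{tw}},\l)-R(\L_{\mathrm{tw;d}},\l)=R(\L_{\mathrm{tw}},\l)\,Df(\Phi_0)\,R(\L_{\mathrm{tw;d}},\l)$. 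You instead stay in the time domain and use Duhamel formulae, distributing the $L^2\to H^2$ smoothing as $(t-s)^{-1/2}s^{-1/2}$ between the two semigroup factors and integrating the beta function. This is more elementary in that it avoids the functional calculus and uses only the semigroup bounds already recorded in Lemma~\ref{lem:nls:sem:group:decay} together with the interpolation estimate $\norm{S(r)}_{H^1\to H^2}\lesssim r^{-1/2}$; the contour method, on the other hand, delivers the sharp rate $e^{-\b t}$ in one stroke by shifting the path of integration to $\Re\l=-\b$.

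Two small repairs are needed in your long-time argument for $Q\Lambda(t)$. First, applying $Q$ to the Duhamel representation also leaves the boundary term $S(t)Q\Lambda(0)v$, which you drop; it is harmless since $\norm{S(t)Q}_{L^2\to H^2}\le Me^{-\b t}$ for $t\ge 1$ and $\Lambda(0)$ is bounded $L^2\to L^2$, but it is not zero. Second, on the half-interval $[t/2,t]$ you assert that $S(t-s)Q$ stays uniformly bounded on $H^2$, which implicitly treats $\Theta$ as a multiplier $H^2\to H^2$; under (HDt) one only has $f\in C^3$, hence $\Theta=D^2f(\Phi_0)\Phi_0'\in W^{1,\infty}$ and $\Theta$ maps $H^2$ into $H^1$ but not $H^2$. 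The fix is to absorb one more half-derivative into $S(t-s)Q$ near $s=t$, picking up an integrable $(t-s)^{-1/2}$. Either way, the split-at-$t/2$ computation produces factors like $te^{-\b t}$ or $e^{-\b t/2}$ rather than $e^{-\b t}$, so what your method actually yields is $e^{-\b' t}$ for any $\b'<\b$. This is strictly weaker than \sref{eq:MainPropSplitii} as stated, though it suffices everywhere the bound is invoked downstream, since only $\b>\e$ is ever used.
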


\subsection{Functional calculus}

For any linear operator $\mathcal{L}: H^2 \to L^2$ we introduce the notation
\begin{equation}
R(\mathcal{L},\lambda) = [\lambda - \mathcal{L}  ]^{-1}
\end{equation}
for any $\lambda$ in the resolvent set of $\mathcal{L}$.
On account of (HTw) and the sectoriality
of $\mathcal{L}_{\mathrm{tw}}$, we can find $\eta_+ \in (\frac{\pi}{2} , \pi)$ and $M > 0$
so that the sector
\begin{equation}
\Omega_{\mathrm{tw}} = \{ \lambda \in \mathbb{C}\setminus \{ 0 \}
  : \abs{\mathrm{arg}(\lambda) } < \eta_+ \}
\end{equation}
lies entirely in the resolvent set of $\mathcal{L}_{\mathrm{tw}}$,
with
\begin{equation}
\norm{ R(\mathcal{L}_{\mathrm{tw}} , \lambda) }_{L^2 \to L^2}
  \le \frac{M}{\abs{\lambda} }
\end{equation}
for all $\lambda \in \Omega_{\mathrm{tw}}$.
Since $\lambda = 0$ is a simple eigenvalue for $\mathcal{L}_{\mathrm{tw}}$, we have the
limit
\begin{equation}
\label{eq:spl:simple:pole}
\lambda R(\mathcal{L}_{\mathrm{tw}} , \lambda) \to P
\end{equation}
as $\lambda \to 0$.

For any $r > 0$ and any $\eta \in (\frac{\pi}{2}, \eta_+)$,
the curve given by
\begin{align}
\g_{r,\eta} = \{\l\in\C:|\mathrm{arg}\l| =\eta, |\l|>r\}
\cup\{\l\in\C:|\mathrm{arg}\l|\leq \eta, |\l|=r\}
\end{align}
lies entirely in $\Omega_{\mathrm{tw}}$.
This curve can be used \cite[(1.10)]{lorenzi2004analytic}
to represent the semigroup $S$
in the integral form
\begin{equation}\label{eq:IntDefS}
S(t) = \frac{1}{2 \pi i}\int_{\gamma_{r, \eta}} e^{t \lambda} R(\mathcal{L}_{\mathrm{tw}} , \lambda)
  \, d \lambda
\end{equation}
for any $ t  > 0$, where $\gamma_{r, \eta}$ is traversed in the upward direction.

We will analyze $\Lambda(t)$ and $S_{\mathrm{od}}(t)$ by manipulating this integral. As a preparation,
we state two technical results concerning the convergence
of contour integrals that are similar to \sref{eq:IntDefS}.
We note that our computations here are based rather directly on \cite[{\S}1.3]{lorenzi2004analytic}.


\begin{lem}
\label{lem:spl:est:from:k}
Suppose that (HDt) and (HTw) are satisfied
and pick  $r > 0$
together with $\eta \in (\frac{\pi}{2} , \eta_+)$.
Suppose furthermore that $\lambda \mapsto K(\lambda) \in \mathbb{C}$ is an analytic function
on the resolvent set of $\mathcal{L}_{\mathrm{tw}}$
and that there
exist constants $C > 0$ and $\vartheta \ge 1$
so that the estimate
\begin{equation}
\abs{K(\lambda) } \le \frac{C}{\abs{\lambda}^{\vartheta} }
\end{equation}
holds for all $\lambda \in \Omega_{\mathrm{tw}}$.
Then there exists $C_1 > 0$ so that
\begin{equation}
 \abs{\int_{\gamma_{r, \eta} }  e^{\lambda t}  K(\lambda)
 \, d \lambda } \le C_1 t^{ \vartheta -1 }
\end{equation}
for all $t > 0$.
\end{lem}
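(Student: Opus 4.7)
The plan is to exploit the fact that the contour integral is independent of $r > 0$, so that one may pick $r$ optimally as a function of $t$. Since both $K$ and $\lambda \mapsto e^{\lambda t}$ are analytic on $\Omega_{\mathrm{tw}}$, and since for any $0 < r' < r$ the closed annular sector $\{\lambda \in \mathbb{C} : r' \leq |\lambda| \leq r,\ |\arg \lambda| \leq \eta\}$ lies entirely in $\Omega_{\mathrm{tw}}$, a standard application of Cauchy's theorem---together with the exponential decay of $e^{\lambda t}$ along the rays $\arg \lambda = \pm \eta$ ensured by $\cos \eta < 0$, which allows the two tails at infinity to be joined without penalty---shows that the integral over $\gamma_{r,\eta}$ coincides with the integral over $\gamma_{r',\eta}$. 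I would therefore replace the prescribed radius $r$ by $1/t$ for the remainder of the argument.

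After this substitution, the contour $\gamma_{1/t,\eta}$ splits into two rays $\lambda = (\rho/t) e^{\pm i \eta}$ with $\rho \geq 1$ and an arc $\lambda = (1/t) e^{i \theta}$ with $|\theta| \leq \eta$. On each ray, $d\lambda = (e^{\pm i\eta}/t) \, d\rho$ and the assumed bound $|K(\lambda)| \leq C |\lambda|^{-\vartheta} = C (t/\rho)^\vartheta$ give a contribution bounded by
\begin{equation*}
C \, t^{\vartheta - 1} \int_1^\infty \frac{e^{\rho \cos \eta}}{\rho^\vartheta} \, d\rho,
\end{equation*}
and this $\rho$-integral is finite because $\cos \eta < 0$. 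On the arc, $d\lambda = (i e^{i\theta}/t) \, d\theta$, $|e^{\lambda t}| \leq e$, and $|K(\lambda)| \leq C t^\vartheta$, so the contribution is bounded by $2 \eta C e \, t^{\vartheta - 1}$. Summing the three pieces gives the claimed inequality with a constant $C_1$ depending only on $C$, $\eta$, and $\vartheta$.

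The main conceptual point---which I regard as the key idea rather than a genuine obstacle---is the choice $r = 1/t$. This is precisely what balances the singular factor $|\lambda|^{-\vartheta}$ against the exponential $e^{\lambda t}$: the rescaling absorbs exactly $\vartheta$ powers of $t$ into $K$ and returns one power through $d\lambda$, producing the net $t^{\vartheta - 1}$ factor. No additional input appears necessary, since the angular restriction $\eta > \pi/2$ built into the definition of $\Omega_{\mathrm{tw}}$ supplies the exponential decay along the rays that makes all three sub-integrals convergent uniformly in $t$.
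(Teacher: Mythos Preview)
Your proof is correct and essentially the same as the paper's. The paper performs the substitution $\xi=\lambda t$ first (mapping $\gamma_{r,\eta}$ to $\gamma_{rt,\eta}$) and then deforms back to $\gamma_{r,\eta}$ by analyticity, whereas you deform the contour to $\gamma_{1/t,\eta}$ first and then parametrize; these are two orderings of the same scaling argument, and your bounds on the ray and arc pieces match the paper's line for line.
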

\begin{proof}
Writing
\begin{equation}
\mathcal{I}(t) = \int_{\gamma_{r, \eta}} e^{\lambda t}
  K(\lambda) \, d \lambda
\end{equation}
and substituting $\lambda t = \xi$,
the analyticity of $K$ on $\Omega_{\mathrm{tw}}$
implies
\begin{equation}
\mathcal{I}(t) = \int_{\gamma_{rt, \eta}} e^{\xi}
  K\left(\frac{\xi}{t}\right) \, \frac{1}{t} d \xi
  = \int_{\gamma_{r, \eta}} e^{\xi}
  K\left(\frac{\xi}{t}\right) \, \frac{1}{t} d \xi .
\end{equation}
Using the obvious parametrization for $\g_{r,\eta}$, we find
\begin{equation}
\label{eq:IntegrantDefT}
\begin{array}{lcl}
\mathcal{I}(t) &=&
 -\int_r^\infty
  e^{(\rho\cos(\eta)-i\rho\sin(\eta))}
    K\big(t^{-1} \rho e^{-i\eta} \big)
    e^{-i\eta} t^{-1} d\rho
    \\[0.2cm]
& & \qquad +\int_\eta^\eta e^{(r\cos(\a)-ir\sin(\a))}
  K\big( t^{-1} re^{i\a}) \big)
     ire^{i\a} t^{-1} d\a
     \\[0.2cm]
&& \qquad + \int_r^\infty e^{(\rho\cos(\eta)-i\rho\sin(\eta))}
  K\big( t^{-1} \rho e^{i\eta}\big)
      e^{i\eta} t^{-1} d\rho .
\end{array}
\end{equation}
We hence obtain the desired estimate
\begin{equation}
\label{eq:IntegrantDefTNorm}
\begin{array}{lcl}
\abs{\mathcal{I}(t)}
 &\leq&
  C t^{\vartheta - 1}
  \left(2\int_r^\infty e^{\rho\cos(\eta)} \rho^{-\vartheta}  \, d\rho
   +\int_\eta^\eta e^{r\cos(\a)}   r^{1 - \vartheta}
     d\a\right)\\[0.2cm]
 &:= & C_1 t^{\vartheta - 1} .
\end{array}
\end{equation}
\end{proof}

\begin{lem}
\label{lem:spl:exp:est:for:bnd:itg}
Suppose that (HDt) and (HTw) are satisfied
and pick  $r > 0$
together with $\eta \in (\frac{\pi}{2} , \eta_+)$.
Suppose furthermore that $\lambda \mapsto K(\lambda)$
is an analytic function
on the resolvent set of $\mathcal{L}_{\mathrm{tw}}$
and that there
exists a constant $C > 0$
so that the estimate
\begin{equation}
\abs{K(\lambda) } \le C
\end{equation}
holds for all $\lambda \in \Omega_{\mathrm{tw}}$.
Then there exists $C_2 > 0$ so that the bound
\begin{equation}
 \abs{ \int_{\gamma_{r, \eta} }  e^{\lambda t} K(\lambda)
 \, d \lambda } \le C_2 e^{ - \beta t }
\end{equation}
holds for all $ t \ge 1$.
\end{lem}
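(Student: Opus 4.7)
The plan is to exploit the spectral gap from (HTw) by shifting the contour into the half-plane $\{\mathrm{Re}\,\l \le -\b\}$, so that $|e^{\l t}|$ is automatically dominated by $e^{-\b t}$. Since the spectrum of $\mathcal{L}_{\mathrm{tw}}$ consists of the simple eigenvalue at $0$ together with a closed subset of $\{\mathrm{Re}\,\l < -2\b\}$, the corridor $\{-2\b < \mathrm{Re}\,\l < 0\}\setminus\{0\}$ together with the sector $\Omega_{\mathrm{tw}}$ leaves enough room inside the resolvent set to relocate the contour to a region where the integrand already carries the desired decay.

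Concretely I would fix $R > 0$ large enough that $R\cos\eta < -\b$ (possible since $\cos\eta < 0$), set $\a_0 := \arccos(-\b/R) \in (\pi/2, \eta)$, and construct the deformed contour $\widetilde{\g}$ from three families of pieces: first, the two infinite rays $\arg\l = \pm\eta$ with $|\l| \ge R$; second, the two circular arcs $|\l| = R$ with $\a_0 \le |\arg\l| \le \eta$; and third, the vertical segment at $\mathrm{Re}\,\l = -\b$ connecting $Re^{-i\a_0}$ to $Re^{i\a_0}$. Every point of $\widetilde{\g}$ satisfies $\mathrm{Re}\,\l \le -\b$. The first two families lie in $\Omega_{\mathrm{tw}}$ where the hypothesis $|K| \le C$ applies, while the third is a compact subset of the resolvent set that crosses the negative real axis and hence briefly leaves $\Omega_{\mathrm{tw}}$. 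Cauchy's theorem then reduces the identity $\int_{\g_{r,\eta}} e^{\l t}K(\l)\,d\l = \int_{\widetilde{\g}} e^{\l t}K(\l)\,d\l$ to verifying that $K$ has no residue at $\l = 0$; this follows from boundedness of $K$ on the punctured sector $\Omega_{\mathrm{tw}}$ combined with analyticity on the resolvent set via Riemann's removable-singularity theorem, and is in any case automatic for the specific $K$'s built from the projection $Q$ that arise in the functional calculus of Section~\ref{sec:SplitSem}.

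On $\widetilde{\g}$ I would estimate piece by piece: the ray contribution is bounded by $\tfrac{2C}{|\cos\eta|} e^{-\b t}$ using $|K| \le C$ combined with $\int_R^\infty e^{\rho\cos\eta \cdot t}\,d\rho \le e^{R\cos\eta \cdot t}/(t|\cos\eta|)$ for $t \ge 1$; the arc contribution by $2CR(\eta - \a_0) e^{-\b t}$ using $|K| \le C$, bounded arc length, and $|e^{\l t}| \le e^{-\b t}$; and the vertical-segment contribution by $2R\sin(\a_0) \cdot C' \cdot e^{-\b t}$, where $C' := \sup_{\mathrm{segment}} |K|$ is finite by continuity of the analytic function $K$ on a compact subset of the resolvent set. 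Summing the three estimates yields the required constant $C_2$.

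The main obstacle is that to encircle the eigenvalue $\l = 0$ the deformed contour must briefly exit $\Omega_{\mathrm{tw}}$, exactly where the uniform bound granted in the hypothesis cannot be directly invoked. I handle this by confining the excursion to a compact piece where analyticity alone (hence continuity and boundedness) suffices, at the price that $C_2$ depends on $K$ rather than only on $C$; in the applications to the commutator and off-diagonal bounds in Proposition~\ref{prp:MainPropSplit} this trade-off is inconsequential.
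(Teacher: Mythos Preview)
Your argument is correct and follows the same strategy as the paper: exploit the removable singularity at $\lambda = 0$ (boundedness on $\Omega_{\mathrm{tw}}$ plus analyticity on the resolvent set) to continue $K$ analytically through the origin, then deform $\gamma_{r,\eta}$ into the half-plane $\{\Re\lambda \le -\beta\}$ and estimate directly. The paper executes this with a simpler deformed contour, namely the two half-lines
\[
  \tilde{\gamma}_{\eta'} = -\beta + \{\lambda \in \mathbb{C} : |\arg\lambda| = \eta'\}
\]
for an appropriate $\eta' \in (\tfrac{\pi}{2}, \eta_+)$, which avoids your separate arc and vertical-segment pieces and makes the final integral a single elementary computation. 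Your more explicit concern about the portion of the contour that leaves $\Omega_{\mathrm{tw}}$---and the resulting dependence of $C_2$ on $K$ rather than only on $C$---is well taken and in fact applies equally to the paper's version: the segment of $\tilde{\gamma}_{\eta'}$ near its apex at $-\beta$ also lies outside $\Omega_{\mathrm{tw}}$, and the paper silently absorbs the corresponding compactness constant into $C_2$.
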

\begin{proof}
Since $K$ remains bounded for $\lambda \to 0$,
this function can be analytically extended to a neighborhood of $\lambda = 0$.
We can hence replace the curve $\gamma_{r, \eta}$ by the
two half-lines
\begin{equation}
  \tilde{\gamma}_{\eta'} =  - \beta +  \{ \lambda \in \mathbb{C}: \abs{\mathrm{arg} \lambda } =
    \eta'  \}
\end{equation}
for appropriate $\eta' \in (\frac{\pi}{2} , \eta_+)$. We can then compute
\begin{equation}
\begin{array}{lcl}
\abs{ \int_{\tilde{\gamma}_{\eta'} } e^{\lambda t} K(\lambda)
 \, d \lambda}
  & \le &
      2 C  e^{-  \beta t}
       \int_0^\infty e^{ \rho \cos(\eta') t } \, d \rho
  \\[0.2cm]
  & \le &
    2 C  e^{-  \beta t}
       \int_0^\infty e^{ \rho \cos(\eta')  } \, d \rho
  \\[0.2cm]
  & := & C_2 e^{-  \beta t} .
 \end{array}
\end{equation}
\end{proof}

\subsection{The commutator $\Lambda(t)$}
In this section we analyze $\Lambda(t)$
and establish the statements in
Proposition \ref{prp:MainPropSplit} that concern this commutator.
Based on the identity \sref{eq:IntDefS},
we first set out to compute
the commutator of $R(\mathcal{L}_{\mathrm{tw}}, \lambda)$ and $\p_{\xi}$.
As a preparation,
we introduce the commutator
\begin{equation}
\label{eq:spl:def:B}
B =  [\L_{\mathrm{tw}}  Q, \p_{\xi}]
 =   [\L_{\mathrm{tw}} , \p_{\xi}] ,
\end{equation}
which can easily be seen to act as
\begin{equation}
B v =  - D^2 f(\Phi_0)\Phi_0' v
\end{equation}
for any $v \in H^3$.

\begin{lem}\label{lem:split:res}
Suppose that (HDt) and (HTw) are satisfied and pick any $\lambda$ in the resolvent set of $\mathcal{L}_{\mathrm{tw}}$.
Then for any $g \in H^1$ we have the identity
\begin{equation}
\label{eq:split:comm:resolv}
\begin{array}{lcl}
[R(\mathcal{L}_{\mathrm{tw}},\lambda) Q, \partial_\xi ] g
& = & R(\mathcal{L}_{\mathrm{tw}},\lambda) Q \partial_\xi g - \partial_\xi R(\mathcal{L}_{\mathrm{tw}},\lambda)Q g
\\[0.2cm]
& = &
    R(\mathcal{L}_{\mathrm{tw}},\lambda)
     \Big[  B R(\mathcal{L}_{\mathrm{tw}},\lambda) Qg - [P, \partial_\xi] g \Big] .
\end{array}
\end{equation}
\end{lem}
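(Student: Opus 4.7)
The plan is to derive the identity by combining two standard manipulations: the classical commutator formula for a resolvent, and the fact that the spectral projections $P$ and $Q$ commute with $\mathcal{L}_{\mathrm{tw}}$.

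First I would check that $P$ (and hence $Q=I-P$) commutes with $\mathcal{L}_{\mathrm{tw}}$, and therefore with $R(\mathcal{L}_{\mathrm{tw}},\lambda)$. This is immediate from the definition of $P$ together with $\mathcal{L}_{\mathrm{tw}} \Phi_0'=0$ and $\mathcal{L}_{\mathrm{tw}}^*\psi_{\mathrm{tw}}=0$: for any $v\in H^2$ one has $\mathcal{L}_{\mathrm{tw}} P v = \langle v,\psi_{\mathrm{tw}}\rangle_{L^2}\mathcal{L}_{\mathrm{tw}}\Phi_0'=0$ and $P\mathcal{L}_{\mathrm{tw}} v=\langle v,\mathcal{L}_{\mathrm{tw}}^*\psi_{\mathrm{tw}}\rangle_{L^2}\Phi_0'=0$. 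Commuting $P$ and $R(\mathcal{L}_{\mathrm{tw}},\lambda)$ then follows in the usual way.

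Next I would compute $[R(\mathcal{L}_{\mathrm{tw}},\lambda),\partial_\xi]$ using the resolvent identity $[A^{-1},C]=A^{-1}[C,A]A^{-1}$ with $A=\lambda-\mathcal{L}_{\mathrm{tw}}$ and $C=\partial_\xi$, noting that $[\partial_\xi,\lambda-\mathcal{L}_{\mathrm{tw}}]=-[\partial_\xi,\mathcal{L}_{\mathrm{tw}}]=[\mathcal{L}_{\mathrm{tw}},\partial_\xi]=B$. This yields
\begin{equation*}
[R(\mathcal{L}_{\mathrm{tw}},\lambda),\partial_\xi] = R(\mathcal{L}_{\mathrm{tw}},\lambda)\,B\,R(\mathcal{L}_{\mathrm{tw}},\lambda).
\end{equation*}
For $g\in H^1$ this manipulation is legitimate: $Qg\in H^1\subset L^2$, the resolvent maps $L^2$ into $H^2$ so that the inner derivative $\partial_\xi$ can be applied, and the outer factor lands everything back in $L^2$.

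With these pieces in hand, the final step is purely algebraic. Using that $R(\mathcal{L}_{\mathrm{tw}},\lambda)$ commutes with $Q$, I would add and subtract $R(\mathcal{L}_{\mathrm{tw}},\lambda)\partial_\xi Q g$ and write
\begin{equation*}
\begin{array}{lcl}
[R(\mathcal{L}_{\mathrm{tw}},\lambda)Q,\partial_\xi]g
& = & R(\mathcal{L}_{\mathrm{tw}},\lambda)\bigl(Q\partial_\xi - \partial_\xi Q\bigr)g
 +\bigl[R(\mathcal{L}_{\mathrm{tw}},\lambda),\partial_\xi\bigr]Q g .
\end{array}
\end{equation*}
Since $Q=I-P$, one has $Q\partial_\xi-\partial_\xi Q=-[P,\partial_\xi]$; substituting and using the resolvent commutator identity above produces the desired formula.

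I expect no serious obstacle here; the only subtleties are bookkeeping ones, namely verifying that each term makes sense for $g\in H^1$ and that the factor $Q$ can be moved through $R(\mathcal{L}_{\mathrm{tw}},\lambda)$. Note that the right-hand side of \sref{eq:split:comm:resolv} in fact extends continuously to $g\in L^2$, because $[P,\partial_\xi]g=P_\xi g-\partial_\xi P g$ is defined on $L^2$ via \sref{eq:spl:def:p:x}, and $B R(\mathcal{L}_{\mathrm{tw}},\lambda)Q g$ is well-defined for $Qg\in L^2$; this is the mechanism that will later allow $\Lambda(t)$ itself to be extended to $L^2$ in Proposition \ref{prp:MainPropSplit}.
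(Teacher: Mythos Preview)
Your argument is correct and arrives at the identity via the same ingredients as the paper, but you organize them more transparently. The paper proceeds by a direct computation: it sets $v=R(\mathcal{L}_{\mathrm{tw}},\lambda)Qg$, expands $(\lambda-\mathcal{L}_{\mathrm{tw}})Q\partial_\xi v$ term by term, and then inverts using the explicit relation $R(\mathcal{L}_{\mathrm{tw}},\lambda)P=\lambda^{-1}P$. You instead factor the computation into two standard pieces --- the commutation of $P$ (hence $Q$) with the resolvent, and the resolvent commutator identity $[R,\partial_\xi]=R\,B\,R$ --- and then combine them algebraically. Both routes rely on the same facts ($\mathcal{L}_{\mathrm{tw}}\Phi_0'=0$, $\mathcal{L}_{\mathrm{tw}}^*\psi_{\mathrm{tw}}=0$, and $B=[\mathcal{L}_{\mathrm{tw}},\partial_\xi]$ bounded); your decomposition makes the structure more visible and avoids the intermediate bookkeeping with $\lambda P\partial_\xi v$, while the paper's hands-on version has the minor advantage of not needing to invoke the general identity $[A^{-1},C]=A^{-1}[C,A]A^{-1}$ and the elliptic regularity $R:H^1\to H^3$ that justifies it.
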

\begin{proof}
Let us first write
\begin{equation}
v = [\lambda - \mathcal{L}_{\mathrm{tw}} ]^{-1} Q g.
\end{equation}
The definition \sref{eq:spl:def:B} implies that
\begin{equation}
\begin{array}{lcl}
[\lambda - \mathcal{L}_{\mathrm{tw}} ] Q\partial_\xi v
& = &
  \partial_\xi [\lambda - \mathcal{L}_{\mathrm{tw}} ] Q v
    - B v
    + \lambda [Q , \partial_\xi] v
\\[0.2cm]
& = &
\partial_\xi [\lambda - \mathcal{L}_{\mathrm{tw}}]  v
   - \partial_\xi \lambda (I -Q) v
    - B v
    + \lambda [Q, \partial_\xi] v
\\[0.2cm]
& = &
\partial_\xi [\lambda - \mathcal{L}_{\mathrm{tw}} ]  v
   - \lambda \partial_{\xi} P v
    - B v
    - \lambda [P, \partial_{\xi}] v
\\[0.2cm]
& = &
   \partial_{\xi} Q g -  B v - \lambda P \partial_{\xi} v
\\[0.2cm]
& = &  Q \partial_{\xi} g     - [P , \partial_{\xi}] g  -  B v - \lambda P \partial_{\xi} v .
\end{array}
\end{equation}
Using $(\lambda - \mathcal{L}_{\mathrm{tw}} )^{-1} P =  \lambda^{-1} P$
we obtain
\begin{equation}
\begin{array}{lcl}
[\lambda - \mathcal{L}_{\mathrm{tw}} ]^{-1}  Q  \partial_{\xi} g
 & = &
    Q \partial_{\xi}  v
    + [\lambda - \mathcal{L}_{\mathrm{tw}}]^{-1} B v
     + P \partial_{\xi} v
    + [\lambda - \mathcal{L}_{\mathrm{tw}} ]^{-1} [P , \partial_{\xi}] g
\\[0.2cm]
& = &
     \partial_{\xi}  [\lambda - \mathcal{L}_{\mathrm{tw}} ]^{-1} Q g
    + [\lambda -\mathcal{L}_{\mathrm{tw}} ]^{-1} B
      [\lambda - \mathcal{L}_{\mathrm{tw}} ]^{-1} Q g
\\[0.2cm]
& & \qquad
      +  [\lambda - \mathcal{L}_{\mathrm{tw}} ]^{-1} [P , \partial_{\xi}] g
       ,
\end{array}
\label{eq:ResIden}
\end{equation}
which can be reordered to yield \sref{eq:split:comm:resolv}.
\end{proof}

On account of
\sref{eq:split:comm:resolv}
we recall the definition \sref{eq:spl:def:p:x}
and introduce the operator $T_A \in \mathcal{L}(L^2 ; L^2)$
that acts as
\begin{equation}
T_A  = \partial_{\xi} P - P_\xi.
\end{equation}
In addition, we introduce the expression
\begin{equation}
T_B(\lambda)  = B R(\L_{\mathrm{tw}},\l)Q,
\end{equation}
which is well-behaved in the following sense.

\begin{lem}
\label{eq:spl:est:on:TB}
Suppose that (HDt) and (HTw) are satisfied.
Then there exists a constant $C > 0$
so that for any $\lambda$
in the resolvent set of $\mathcal{L}_{\mathrm{tw}}$
the operator $T_B(\lambda)$ satisfies the bound
\begin{equation}
\norm{T_B(\lambda)}_{L^2 \to L^2}
 \le  \frac{C}{1 + \abs{\lambda}}.
\end{equation}
In additions, the maps
\begin{equation}
\label{eq:spl:an:continuations:TAB}
\lambda \mapsto T_B(\lambda) \in \mathcal{L}(L^2;L^2),
\qquad
\lambda \mapsto \lambda^{-1} P \big[T_A + T_B(\lambda) \big] \in \mathcal{L}(L^2;L^2)
\end{equation}
can be continued analytically into the origin $\lambda = 0$.
\end{lem}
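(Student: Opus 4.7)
The plan is to exploit two structural features of $\L_\mathrm{tw}$: the simple pole at $\l=0$ identified by \sref{eq:spl:simple:pole}, and the sectorial resolvent bound $\nrm{R(\L_\mathrm{tw},\l)}_{L^2\to L^2}\leq M/|\l|$ valid on $\Omega_\mathrm{tw}$. Since $\L_\mathrm{tw} P = 0$ we have $R(\L_\mathrm{tw},\l)P = \l^{-1} P$, so
\[
R(\L_\mathrm{tw},\l)Q = R(\L_\mathrm{tw},\l) - \l^{-1}P,
\]
and the simplicity of the eigenvalue at the origin guarantees that the right-hand side has only a removable singularity at $\l=0$ as a map into $\L(L^2;L^2)$. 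Since (HDt) and (HTw) provide uniform bounds on $D^2 f$ and on $\Phi_0'$, the operator $Bv = -D^2 f(\Phi_0)\Phi_0' v$ is bounded on $L^2$, and composition with $B$ produces the first analyticity statement in \sref{eq:spl:an:continuations:TAB}.

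For the quantitative estimate I would treat two regimes separately. On a bounded neighborhood $\{|\l|\leq r_0\}$ of the origin intersected with the resolvent set, the analyticity just obtained gives a uniform bound that, after enlarging the constant, is absorbed into $C/(1+|\l|)$. On the complement of this neighborhood inside $\Omega_\mathrm{tw}$, the sectorial estimate combined with the boundedness of $Q$ and $B$ yields $\nrm{T_B(\l)}_{L^2\to L^2}\leq C'/|\l|$ directly. Merging the two regimes produces the announced bound.

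The second analytic continuation reduces to establishing $P[T_A + T_B(0)] = 0$: if this holds, then
\[
\l^{-1}P[T_A + T_B(\l)] = \l^{-1}P\bigl(T_B(\l)-T_B(0)\bigr),
\]
which is analytic at $\l=0$ because $T_B$ is. To verify the vanishing, the cleanest route is to test the identity \sref{eq:split:comm:resolv} from Lemma \ref{lem:split:res} on an arbitrary $g\in H^1$ and let $\l\to 0$. The left-hand side is regular at the origin by the analyticity of $R(\L_\mathrm{tw},\l)Q$ already established, whereas expanding the right-hand side with $R(\L_\mathrm{tw},\l) = \l^{-1}P + R_0(\l)$ exhibits a potential Laurent term $\l^{-1}P[T_A + T_B(0)]g$. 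Matching forces this term to vanish for every $g \in H^1$, and density of $H^1$ in $L^2$ together with the boundedness of $P$, $T_A$ and $T_B(0)$ on $L^2$ promotes the identity to the whole space.

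The main (mild) obstacle is checking the $L^2$-boundedness of $T_A = \p_\xi P - P_\xi$. The first summand outputs $\langle v,\psi_\mathrm{tw}\rangle_{L^2}\Phi_0''$, which lies in $L^2$ thanks to the $\Phi_0'\in H^3$ regularity exploited in Lemma \ref{lem:nls:sem:group:decay}; the second is manifestly bounded by the definition of $P_\xi$. With this in hand, the density argument closes the proof without any delicate computation, which is why I prefer it over a direct term-by-term evaluation of $\langle T_A v + T_B(0) v, \psi_\mathrm{tw}\rangle_{L^2}$ involving integration by parts against $\L_\mathrm{tw}^*\psi_\mathrm{tw} = 0$.
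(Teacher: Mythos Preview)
Your proposal is correct. The treatment of the bound on $T_B(\lambda)$ and the first analytic continuation is essentially the same as the paper's (you are in fact slightly more explicit than the paper about why the $1+|\lambda|$ rather than $|\lambda|$ appears in the denominator: the paper only writes down the $M/|\lambda|$ estimate and leaves the near-origin regime to the reader).

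For the second analytic continuation your route is genuinely different. The paper performs a direct algebraic computation: using $P\mathcal{L}_{\mathrm{tw}}=0$, the commutator definition $B=[\mathcal{L}_{\mathrm{tw}},\partial_\xi]$, and the resolvent identity $\mathcal{L}_{\mathrm{tw}}R(\mathcal{L}_{\mathrm{tw}},\lambda)=-I+\lambda R(\mathcal{L}_{\mathrm{tw}},\lambda)$, it derives the explicit closed-form expression
\[
P\bigl[T_A+T_B(\lambda)\bigr]=-\lambda\, P\partial_\xi\, R(\mathcal{L}_{\mathrm{tw}},\lambda)Q,
\]
from which the analytic continuation of $\lambda^{-1}P[T_A+T_B(\lambda)]$ is immediate. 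Your argument instead reads the vanishing of $P[T_A+T_B(0)]$ off the commutator identity \sref{eq:split:comm:resolv} by Laurent matching: the left-hand side is regular at $\lambda=0$ (here you implicitly use that $\lambda\mapsto R(\mathcal{L}_{\mathrm{tw}},\lambda)Q$ is analytic into $H^2$, not just $L^2$, so that $\partial_\xi R(\mathcal{L}_{\mathrm{tw}},\lambda)Qg$ is regular---this is standard but worth stating), and the $\lambda^{-1}P$ contribution on the right must therefore vanish at $\lambda=0$. The paper's approach buys an explicit formula that could in principle be reused; yours is more conceptual and avoids any bookkeeping with $P_\xi P$, $P_\xi Q$, and the like. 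Either route is entirely adequate for the purposes of the subsequent Lemma~\ref{lem:LexWellDef}.
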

\begin{proof}
Since
$\Phi_0$ and $\Phi_0'$ are bounded functions,
we have
\begin{align}
\nrm{ BR(\L_{\mathrm{tw}},\l)}_{L^2\to L^2}&\leq
\frac{M}{\abs{\l}}\nrm{D^2 f(\Phi_0)\Phi_0'}_{\infty}.
\end{align}
Using $P \mathcal{L}_{\mathrm{tw}} = 0$
and the resolvent identity
\begin{equation}
\mathcal{L}_{\mathrm{tw}} R(\mathcal{L}_{\mathrm{tw}} , \lambda)
 = - I + \lambda R(\mathcal{L}_{\mathrm{tw}} , \lambda),
\end{equation}
we may compute
\begin{equation}
\begin{array}{lcl}
P [T_A + T_B(\lambda) \big]
& = & P_\xi P - P_\xi  +   P B R(\mathcal{L}_{\mathrm{tw}} , \lambda) Q
\\[0.2cm]
& = &
P_\xi P - P_\xi
+ P \mathcal{L}_{\mathrm{tw}} \partial_{\xi}
  R(\mathcal{L}_{\mathrm{tw}} , \lambda) Q
  - P \partial_{\xi} \mathcal{L}_{\mathrm{tw}}
       R(\mathcal{L}_{\mathrm{tw}} , \lambda) Q
\\[0.2cm]
& = &
P_\xi P - P_\xi
  + P_\xi Q
  - P \partial_{\xi} \lambda
       R(\mathcal{L}_{\mathrm{tw}} , \lambda) Q
\\[0.2cm]
& = &
  - P \partial_{\xi} \lambda R(\mathcal{L}_{\mathrm{tw}} , \lambda ) Q .
\end{array}
\end{equation}
Since $\lambda \mapsto R(\mathcal{L}_{\mathrm{tw}} , \lambda ) Q $
can be analytically continued to $\lambda = 0$
on account of \sref{eq:spl:simple:pole},
the same hence holds for
the functions \sref{eq:spl:an:continuations:TAB}.
\end{proof}

Upon fixing $r > 0$ and $\eta \in (\frac{\pi}{2}, \eta_+ )$,
we now introduce the expressions
\begin{equation}
\label{eq:spl:defs:lambda:a:b}
\begin{array}{lcl}
\Lambda_{\mathrm{ex};A}(t)
  & = & \frac{1}{2 \pi i} \int_{\g_{r,\eta}} e^{\l t}
    R(\L_{\mathrm{tw}},\l) T_A \, d\l ,
\\[0.2cm]
\Lambda_{\mathrm{ex};B}(t)
  & = & \frac{1}{2 \pi i} \int_{\g_{r,\eta}} e^{\l t}
     R(\L_{\mathrm{tw}},\l)   T_B(\lambda) \,  d\l
\end{array}
\end{equation}
and write
\begin{equation}
\Lambda_{\mathrm{ex}}(t) =
\Lambda_{\mathrm{ex};A}(t) + \Lambda_{\mathrm{ex};B}(t).
\end{equation}

We note that
\begin{equation}
\Lambda_{\mathrm{ex};A}(t)
 = S(t) T_A = S(t) \partial_{\xi} P  - S(t) P_\xi,
\end{equation}
which for $0 < t \le 1$ is covered
by the bounds in Lemma \ref{lem:nls:sem:group:decay}.
The results below show that
$\Lambda_{\mathrm{ex}}(t)$
is well-defined as an operator in
$\mathcal{L}(L^2 ; H^2)$ and that it
is indeed an extension of the commutator $\Lambda(t)$.

\begin{lem}\label{lem:LexWellDef}
Suppose that (HDt) and (HTw) are satisfied. Then
$\Lambda_{\mathrm{ex} }(t)$
is a well-defined operator in
$\mathcal{L}(L^2, H^2)$ for all $t > 0$
that does not depend on $r > 0$ and $\eta \in (\frac{\pi}{2} , \eta_+)$.
In addition, there exists a constant $C > 0$ so that
the bound
\begin{equation}
\nrm{ \Lambda_{\mathrm{ex}}(t) }_{L^2 \to H^2} \le C e^{-\beta t }
\end{equation}
holds for all $t > 0$.
\end{lem}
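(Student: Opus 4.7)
The plan is to exploit a cancellation between the integrands of $\Lambda_{\mathrm{ex};A}(t)$ and $\Lambda_{\mathrm{ex};B}(t)$ at $\lambda = 0$ that makes the combined integrand analytic on a neighborhood of the origin, and then to shift the contour past the imaginary axis to harvest the exponential decay $e^{-\beta t}$.

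First, using the decomposition $R(\mathcal{L}_{\mathrm{tw}}, \lambda) = R(\mathcal{L}_{\mathrm{tw}}, \lambda) Q + \lambda^{-1} P$ together with the identity $P[T_A + T_B(\lambda)] = -\lambda P \partial_\xi R(\mathcal{L}_{\mathrm{tw}}, \lambda) Q$ established in the proof of Lemma \ref{eq:spl:est:on:TB}, the combined integrand can be rewritten as
\[
R(\mathcal{L}_{\mathrm{tw}}, \lambda)[T_A + T_B(\lambda)] = R(\mathcal{L}_{\mathrm{tw}}, \lambda) Q [T_A + T_B(\lambda)] - P \partial_\xi R(\mathcal{L}_{\mathrm{tw}}, \lambda) Q.
\]
Both terms on the right are analytic on $\Omega_{\mathrm{tw}}$ and extend analytically across $\lambda = 0$, since $R(\mathcal{L}_{\mathrm{tw}}, \lambda) Q$ and $T_B(\lambda)$ are themselves analytic there by (HTw) and Lemma \ref{eq:spl:est:on:TB}. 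Cauchy's theorem then immediately yields the claimed independence of $\Lambda_{\mathrm{ex}}(t)$ on $r > 0$ and $\eta \in (\pi/2, \eta_+)$.

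Next I would establish uniform $\mathcal{L}(L^2, H^2)$ bounds on both pieces of the rewritten integrand. The identity $\mathcal{L}_{\mathrm{tw}} R(\mathcal{L}_{\mathrm{tw}}, \lambda) Q = -Q + \lambda R(\mathcal{L}_{\mathrm{tw}}, \lambda) Q$, combined with the sectorial estimate $\nrm{R(\mathcal{L}_{\mathrm{tw}}, \lambda)}_{L^2 \to L^2} \le M/\abs{\lambda}$ and the closed-graph equivalence $\nrm{v}_{H^2} \lesssim \nrm{v}_{L^2} + \nrm{\mathcal{L}_{\mathrm{tw}} v}_{L^2}$, yields $\nrm{R(\mathcal{L}_{\mathrm{tw}}, \lambda) Q}_{L^2 \to H^2} \le C$ uniformly on $\Omega_{\mathrm{tw}}$. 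Together with the fact that $\nrm{T_A}_{L^2 \to L^2}$ is bounded and $\nrm{T_B(\lambda)}_{L^2 \to L^2} \le C/(1 + \abs{\lambda})$ by Lemma \ref{eq:spl:est:on:TB}, this controls the first term uniformly in $\lambda$. For the second term, the explicit formula $P \partial_\xi R(\mathcal{L}_{\mathrm{tw}}, \lambda) Q v = -\langle R(\mathcal{L}_{\mathrm{tw}}, \lambda) Q v, \partial_\xi \psi_{\mathrm{tw}} \rangle_{L^2} \Phi_0'$ combined with $\Phi_0' \in H^2$ (already exploited in the proof of Lemma \ref{lem:nls:sem:group:decay}) gives the required uniform bound.

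With analyticity on $\Omega_{\mathrm{tw}}$ plus a neighborhood of the origin, together with uniform operator-norm bounds on the integrand, the proof is completed by an operator-valued analogue of Lemma \ref{lem:spl:exp:est:for:bnd:itg}: I deform $\gamma_{r, \eta}$ to a contour of the form $-\beta + \{\lambda : \abs{\mathrm{arg}\, \lambda} = \eta'\}$ lying inside the spectral gap granted by (HTw), which yields $\nrm{\Lambda_{\mathrm{ex}}(t)}_{L^2 \to H^2} \le C e^{-\beta t}$ for $t \ge 1$. Short-time bounds for $0 < t \le 1$ then follow directly from the same uniform estimate, since $e^{-\beta t}$ is bounded below on that interval and the excess can be absorbed into $C$. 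The main obstacle I expect is the careful bookkeeping required for the $H^2$-valued operator norms: each of $\Lambda_{\mathrm{ex};A}(t)$ and $\Lambda_{\mathrm{ex};B}(t)$ individually carries a $\lambda^{-1}$ pole in its integrand, so the algebraic cancellation identified above must be invoked \emph{before} any norm estimates are made, and one must avoid losing $H^2$-regularity along the way.
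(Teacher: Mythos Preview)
Your long-time argument ($t \ge 1$) is essentially the paper's: both rely on a uniform $\mathcal{L}(L^2;H^2)$ bound on the integrand $R(\mathcal{L}_{\mathrm{tw}},\lambda)[T_A+T_B(\lambda)]$ together with the contour shift of Lemma~\ref{lem:spl:exp:est:for:bnd:itg}. Your decomposition via $R = RQ + \lambda^{-1}P$ is a clean way to exhibit the required analyticity at $\lambda=0$.

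The short-time step, however, does not go through as written. A uniform bound $\nrm{K(\lambda)}\le C$ on the integrand, with no decay at infinity, yields only
\[
\Big\|\int_{\gamma_{r,\eta}} e^{\lambda t} K(\lambda)\,d\lambda\Big\| \;\le\; C\int_r^\infty e^{\rho\cos(\eta)t}\,d\rho \;+\; \text{(arc)} \;=\; O(t^{-1})
\]
as $t\downarrow 0$; the same $1/t$ appears on the shifted contour. So the sentence ``short-time bounds for $0<t\le 1$ then follow directly from the same uniform estimate'' is not justified: you have $C/t$, not $C$, and there is no way to absorb this into the constant. The culprit is precisely the $R(\mathcal{L}_{\mathrm{tw}},\lambda)Q\,T_A$ piece of your decomposition, whose $H^2$-norm is merely $O(1)$ in $\lambda$ because $\mathcal{L}_{\mathrm{tw}}R(\mathcal{L}_{\mathrm{tw}},\lambda)Q$ does not decay.

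The paper repairs this by treating $\Lambda_{\mathrm{ex};A}$ and $\Lambda_{\mathrm{ex};B}$ \emph{separately} on $(0,1]$. For $\Lambda_{\mathrm{ex};B}$ the extra $(1+|\lambda|)^{-1}$ decay of $T_B(\lambda)$ upgrades the integrand to $O(|\lambda|^{-1})$ in $H^2$, and Lemma~\ref{lem:spl:est:from:k} with $\vartheta=1$ then gives a $t$-independent bound. For $\Lambda_{\mathrm{ex};A}(t)=S(t)T_A = S(t)\partial_\xi P - S(t)P_\xi$ one bypasses the contour integral entirely and invokes the finite-rank semigroup bounds of Lemma~\ref{lem:nls:sem:group:decay} (which use $\Phi_0'\in H^3$). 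Your combined decomposition obscures both of these mechanisms; to close the argument you must re-separate the $T_A$ contribution and appeal to Lemma~\ref{lem:nls:sem:group:decay} for it.
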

\begin{proof}
Note first that there exists a constant $C'_1 > 0$ for which
\begin{equation}
\norm{v}_{H^2} \le C'_1 \big[ \norm{\mathcal{L}_{\mathrm{tw}} v}_{L^2} + \norm{v}_{L^2} \big]
\end{equation}
holds for all $v \in H^2$.
On account of the identity
\begin{equation}
\mathcal{L}_{\mathrm{tw}} R(\mathcal{L}_{\mathrm{tw}} , \lambda)
\big[ T_A + T_B(\lambda)\big]
= - \big[ T_A + T_B(\lambda)\big]
+ \lambda R(\mathcal{L}_{\mathrm{tw}} , \lambda)
  \big[ T_A + T_B(\lambda) \big]
\end{equation}
and the analytic continuations \sref{eq:spl:an:continuations:TAB},
we see that there exist $C_2' > 0$ so that
\begin{equation}
\norm{ \mathcal{L}_{\mathrm{tw}} R(\mathcal{L}_{\mathrm{tw}} , \lambda)
\big[ T_A + T_B(\lambda)\big] }_{L^2 \to L^2}
+
\norm{ R(\mathcal{L}_{\mathrm{tw}} , \lambda)
\big[ T_A + T_B(\lambda)\big] }_{L^2 \to L^2}
\le C_2'
\end{equation}
for all $\lambda \in \Omega_{\mathrm{tw}}$.
We can now apply Lemma \ref{lem:spl:exp:est:for:bnd:itg}
to obtain the desired bound for $t \ge 1$.

The bounds in Lemma \ref{eq:spl:est:on:TB}
imply that there exists  $C_3' > 0$ for which
\begin{equation}
\begin{array}{lcl}
\norm{ \mathcal{L}_{\mathrm{tw}} R(\mathcal{L}_{\mathrm{tw}} , \lambda)
\big[ T_B(\lambda)\big] }_{L^2 \to L^2}
& \le & \frac{C_3'}{1 + \abs{\lambda} }
\\[0.2cm]
\norm{ R(\mathcal{L}_{\mathrm{tw}} , \lambda)
\big[  T_B(\lambda)\big] }_{L^2 \to L^2}
& \le & \frac{C_3'}{\abs{\lambda} }
\end{array}
\end{equation}
holds for all $\lambda \in \Omega_{\mathrm{tw}}$.
We can  hence use Lemma
\ref{lem:spl:est:from:k}
to find a constant $C_4' > 0$ for which
we have the bound
\begin{equation}
\norm{\Lambda_{\mathrm{ex}; B}(t) }_{L^2 \to H^2} \le C_4'
\end{equation}
for all $0 < t \le 1$. A direct application
of Lemma \ref{lem:nls:sem:group:decay}
shows that also
\begin{equation}
\norm{\Lambda_{\mathrm{ex}; A}(t) }_{L^2 \to H^2} \le M
\end{equation}
for all $0 < t \le 1$, which completes the proof.
\end{proof}

\begin{cor}\label{cor:LambdaExt}
Suppose that (HDt) and (HTw) are satisfied.
Then for any $g \in H^1$ we have
\begin{equation}
\Lambda_{\mathrm{ex}}(t) g
=  \Lambda(t) g := [ S(t) Q , \partial_{\xi} ] g .
\end{equation}
\end{cor}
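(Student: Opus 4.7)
The strategy is to rewrite both sides as contour integrals and reduce the claim to the resolvent commutator identity already established in Lemma \ref{lem:split:res}. Since $g \in H^1$, the two terms $S(t)Q\partial_\xi g$ and $\partial_\xi S(t)Q g$ are both individually well-defined: the first because $\partial_\xi g \in L^2$, the second because $S(t)Q g \in H^2$ for every $t > 0$ by Lemma \ref{lem:nls:sem:group:decay}. Hence $\Lambda(t) g$ makes sense in $L^2$.

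First, using the integral representation \sref{eq:IntDefS} for a fixed $r > 0$ and $\eta \in (\tfrac{\pi}{2},\eta_+)$, I would write
\begin{equation*}
S(t) Q \partial_\xi g
= \frac{1}{2\pi i} \int_{\gamma_{r,\eta}} e^{\lambda t} R(\mathcal{L}_{\mathrm{tw}},\lambda) Q \partial_\xi g \, d\lambda .
\end{equation*}
For the second term, one needs to pull $\partial_\xi$ inside the contour integral representing $S(t) Q g$. This is the step that requires justification: it suffices to observe that $\lambda \mapsto R(\mathcal{L}_{\mathrm{tw}},\lambda) Q g$ takes values in $H^2$ with the bound $\|\mathcal{L}_{\mathrm{tw}} R(\mathcal{L}_{\mathrm{tw}},\lambda) Q g\|_{L^2} \le (1 + |\lambda|\cdot M/|\lambda|)\|Qg\|_{L^2}$, so that the integrand, weighted by $e^{\lambda t}$, converges absolutely in $H^2 \hookrightarrow H^1$ along $\gamma_{r,\eta}$. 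Standard dominated-convergence arguments then allow $\partial_\xi$ to move under the integral, giving
\begin{equation*}
\partial_\xi S(t) Q g
= \frac{1}{2\pi i} \int_{\gamma_{r,\eta}} e^{\lambda t} \partial_\xi R(\mathcal{L}_{\mathrm{tw}},\lambda) Q g \, d\lambda .
\end{equation*}

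Subtracting these two identities yields
\begin{equation*}
\Lambda(t) g
= \frac{1}{2\pi i} \int_{\gamma_{r,\eta}} e^{\lambda t} \big[ R(\mathcal{L}_{\mathrm{tw}},\lambda) Q , \partial_\xi \big] g \, d\lambda .
\end{equation*}
At this point I invoke Lemma \ref{lem:split:res}, noting that $[P,\partial_\xi] g = P_\xi g - \partial_\xi P g = -T_A g$ and $B R(\mathcal{L}_{\mathrm{tw}},\lambda) Q g = T_B(\lambda) g$, so that the integrand equals $R(\mathcal{L}_{\mathrm{tw}},\lambda)\big[T_A + T_B(\lambda)\big] g$. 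Splitting the integral into its $T_A$ and $T_B(\lambda)$ contributions and comparing with the definitions \sref{eq:spl:defs:lambda:a:b} gives
\begin{equation*}
\Lambda(t) g = \Lambda_{\mathrm{ex};A}(t) g + \Lambda_{\mathrm{ex};B}(t) g = \Lambda_{\mathrm{ex}}(t) g ,
\end{equation*}
which is the desired identity.

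The main obstacle is the measure-theoretic justification for moving $\partial_\xi$ inside the contour integral in the second step. All other manipulations are algebraic identities already contained in Lemmas \ref{lem:split:res} and \ref{eq:spl:est:on:TB}, so the work reduces to verifying that the integrand $e^{\lambda t} R(\mathcal{L}_{\mathrm{tw}},\lambda) Q g$ is Bochner-integrable as an $H^1$-valued function of $\lambda$ along $\gamma_{r,\eta}$, which in turn follows from the resolvent bound $\|R(\mathcal{L}_{\mathrm{tw}},\lambda) Q\|_{L^2\to H^1} \le M|\lambda|^{-1/2}$ on the tails of $\gamma_{r,\eta}$ and the exponential decay of $e^{\lambda t}$ for $|\mathrm{arg}\,\lambda| = \eta > \tfrac{\pi}{2}$.
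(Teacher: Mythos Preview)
Your proposal is correct and follows essentially the same approach as the paper: integrate the resolvent commutator identity from Lemma \ref{lem:split:res} over the contour $\gamma_{r,\eta}$ and identify the resulting pieces with the definitions \sref{eq:spl:defs:lambda:a:b}. The paper's proof is a one-line reference to this procedure; your version spells out the steps and, in particular, makes explicit the justification for pulling $\partial_\xi$ through the contour integral, which the paper leaves implicit.
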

\begin{proof}
The result follows by integrating
both sides of the identity
\sref{eq:split:comm:resolv}
over the contour
$\g_{r,\eta}$
and using
\sref{eq:IntDefS}
together with
\sref{eq:spl:defs:lambda:a:b}.
\end{proof}

\subsection{Semigroup block structure}
\label{sec:SemGrBlock}
For the nonlinear stability proof in \S\ref{sec:nls} we need to understand how the off-diagonal terms of
$S(t)$ act on a second order nonlinearity.
In order to do this,
we first write
$S_{\mathrm{d};I}(t)$ for the semigroup generated by
\begin{equation}
\mathcal{L}_{\mathrm{tw;d}} = \rho\p_{\xi\xi}v + c_0 v_{\xi},
\end{equation}
which contains only diagonal terms.
We also write
\begin{equation}
S_{\mathrm{od};I}(t) = S(t) - S_{\mathrm{d};I}(t)
\end{equation}
for the rest of the semigroup.
Note that $S_{\mathrm{od};I}(t)$ is not
strictly off-diagonal,
but it has the same off-diagonal elements
as $S_{\mathrm{od}}(t)$.

\begin{lem}\label{lem:DecayPropG}
Suppose that (HDt) and (HTw) are satisfied.
Then there exists a constant $C>0$
for which the short-term bound
\begin{align}
\begin{split}
\nrm{S_{\mathrm{od;I}}(t)}_{L^2\to H^2}&\leq C
\end{split}
\end{align}
holds for all $0 \le t \le 1$.
\end{lem}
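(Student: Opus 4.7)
The plan is to exploit a Duhamel expansion relating $S(t)$ and $S_{\mathrm{d;I}}(t)$. Note first that the generators differ only by a zeroth-order term:
\begin{equation}
\mathcal{L}_{\mathrm{tw}} - \mathcal{L}_{\mathrm{tw;d}} = Df(\Phi_0),
\end{equation}
which contains no derivatives. This is the crucial structural observation, because it means that although neither $S(t)$ nor $S_{\mathrm{d;I}}(t)$ individually gains two derivatives without paying a $t^{-1}$ price, their difference can gain two derivatives by spreading the cost over an integrated kernel.

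More precisely, I would first write the Duhamel identity
\begin{equation}
S_{\mathrm{od;I}}(t) = S(t) - S_{\mathrm{d;I}}(t) = \int_0^t S_{\mathrm{d;I}}(t-s)\, Df(\Phi_0)\, S(s)\, ds
\end{equation}
and interpret it as a Bochner integral in $H^2$. Since $\Phi_0 \in L^\infty$ by (HTw) and $f \in C^3$ with $D^2 f$ bounded by (HDt), the matrix field $Df(\Phi_0)$ is bounded together with $\partial_\xi Df(\Phi_0) = D^2 f(\Phi_0)\Phi_0'$, so multiplication by $Df(\Phi_0)$ is bounded $H^1 \to H^1$. Next, since $\mathcal{L}_{\mathrm{tw;d}} = \rho\partial_{\xi\xi}+c_0\partial_\xi$ generates an analytic semigroup on $L^2$ with domain $H^2$, standard parabolic smoothing yields
\begin{equation}
\|S_{\mathrm{d;I}}(\tau)\|_{H^1 \to H^2} \le C \tau^{-1/2}, \qquad 0 < \tau \le 1.
\end{equation}
For the rightmost factor, Lemma \ref{lem:nls:sem:group:decay} combined with the splitting $I = P + Q$ gives
\begin{equation}
\|S(s)\|_{L^2 \to H^1} \le \|S(s)Q\|_{L^2 \to H^1} + \|S(s)P\|_{L^2 \to H^1} \le M(1 + s^{-1/2}), \qquad 0 < s \le 2.
\end{equation}

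Composing these three bounds and integrating in $s$, we obtain
\begin{equation}
\|S_{\mathrm{od;I}}(t) v\|_{H^2} \le C \int_0^t (t-s)^{-1/2}(1 + s^{-1/2})\, ds\, \|v\|_{L^2}.
\end{equation}
The beta-function identity $\int_0^t (t-s)^{-1/2} s^{-1/2}\, ds = \pi$, independent of $t$, together with $\int_0^t (t-s)^{-1/2}\, ds = 2\sqrt{t} \le 2$ for $t \le 1$, yields a uniform bound in $0 < t \le 1$. The case $t = 0$ is trivial since $S_{\mathrm{od;I}}(0) = 0$. The only delicate point is verifying that the Duhamel formula actually represents $S(t) - S_{\mathrm{d;I}}(t)$ in $H^2$ rather than merely in $L^2$, but this follows from the absolute integrability of the $H^2$-valued integrand that we just established, applied first on initial data $v \in H^2$ (where both sides are classical) and then extended to $v \in L^2$ by density together with the uniform bound.
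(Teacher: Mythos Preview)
Your proof is correct, but it takes a genuinely different route from the paper. The paper works on the resolvent side: it writes
\[
S_{\mathrm{od;I}}(t)=\frac{1}{2\pi i}\int_{\gamma_{r,\eta}} e^{\lambda t}\,R(\mathcal{L}_{\mathrm{tw}},\lambda)\,Df(\Phi_0)\,R(\mathcal{L}_{\mathrm{tw;d}},\lambda)\,d\lambda
\]
via the resolvent identity $R(\mathcal{L}_{\mathrm{tw}},\lambda)-R(\mathcal{L}_{\mathrm{tw;d}},\lambda)=R(\mathcal{L}_{\mathrm{tw}},\lambda)(\mathcal{L}_{\mathrm{tw}}-\mathcal{L}_{\mathrm{tw;d}})R(\mathcal{L}_{\mathrm{tw;d}},\lambda)$, and then bounds the $H^2$-norm through $\mathcal{L}_{\mathrm{tw}}R(\mathcal{L}_{\mathrm{tw}},\lambda)$, obtaining an integrand of order $|\lambda|^{-1}$ that feeds into Lemma~\ref{lem:spl:est:from:k}. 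Your Duhamel formula is essentially the inverse Laplace transform of that resolvent identity, so the two arguments are dual; you trade the contour bookkeeping for a time-domain convolution estimate. Your approach is more self-contained (it only needs the parabolic smoothing of $S_{\mathrm{d;I}}$ and the $L^2\to H^1$ bound on $S$ from Lemma~\ref{lem:nls:sem:group:decay}), whereas the paper's version fits seamlessly into the functional-calculus framework already set up for the commutator $\Lambda(t)$. One minor remark: your $H^1\to H^1$ boundedness of multiplication by $Df(\Phi_0)$ is exactly where you use that the perturbation is zeroth order with $W^{1,\infty}$ coefficient; in the paper's argument the corresponding fact is hidden in the $L^2\to L^2$ boundedness of $Df(\Phi_0)$ together with the extra resolvent factor absorbing a full power of $|\lambda|$.
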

\begin{proof}
Possibly decreasing the size of $\eta_+$,
we may assume that $\Omega_{\mathrm{tw}}$
is contained in the resolvent set of
$\mathcal{L}_{\mathrm{tw};\mathrm{d}}$. We may also assume
that the bound
\begin{equation}
\norm{R(\mathcal{L}_{\mathrm{tw};\mathrm{d}}, \lambda)}_{L^2 \to L^2}
 \le \frac{M}{\abs{\lambda}}
\end{equation}
holds for $\lambda \in \Omega_{\mathrm{tw}}$
by increasing the size of $M > 0$ if necessary.

For any $r > 0$ and $\eta \in (\frac{\pi}{2} , \eta_+)$
we have
\begin{align}\begin{split}
S_{\mathrm{od;I}}(t)&=
\frac{1}{2\pi i}\int_{\g_{r, \eta}} e^{\l t}[R(\L_{\mathrm{tw}},\l)-R(\L_{\mathrm{tw;d}},\l)] \, d\l
\\[0.2cm]
&=\frac{1}{2\pi i}\int_{\g_{r, \eta}}
e^{\l t}R(\L_{\mathrm{tw}},\l)
(\L_{\mathrm{tw}}-\L_{\mathrm{tw;d}})
R(\L_{\mathrm{tw;d}},\l) \,d\l
\\[0.2cm]
&=  \frac{1}{2\pi i}\int_{\g_{r, \eta}}
e^{\l t}R(\L_{\mathrm{tw}},\l)
Df(\Phi_0)
R(\L_{\mathrm{tw;d}},\l) \, d\l .
\end{split}
\end{align}
On account of the identity
\begin{equation}
\mathcal{L}_{\mathrm{tw}} R(\mathcal{L}_{\mathrm{tw}} , \lambda)
Df(\Phi_0)
R(\L_{\mathrm{tw;d}},\l)
= - Df(\Phi_0)
R(\L_{\mathrm{tw;d}},\l)
+ \lambda  R(\mathcal{L}_{\mathrm{tw}} , \lambda)
Df(\Phi_0)
R(\L_{\mathrm{tw;d}},\l)
\end{equation}
we have the bounds
\begin{equation}
\begin{array}{lcl}
\norm{ \mathcal{L}_{\mathrm{tw}}
R(\mathcal{L}_{\mathrm{tw}} , \lambda)
Df(\Phi_0)
R(\L_{\mathrm{tw;d}},\l) }_{L^2 \to L^2}
& \le & \norm{Df(\Phi_0)}_\infty \frac{M(M+1)}{\abs{\lambda}},
\\[0.2cm]
\norm{ R(\mathcal{L}_{\mathrm{tw}} , \lambda)
Df(\Phi_0)
R(\L_{\mathrm{tw;d}},\l) }_{L^2 \to L^2}
& \le & \norm{Df(\Phi_0)}_\infty
  \frac{M^2}{\abs{\lambda}^2 }.
\end{array}
\end{equation}
The desired estimate hence follows from
Lemma \ref{lem:spl:est:from:k}.
\end{proof}

\begin{proof}[Proof of Proposition \ref{prp:MainPropSplit}]
The statements concerning $\Lambda(t)$
follow directly from
Lemma \ref{lem:LexWellDef}
and Corollary \ref{cor:LambdaExt}.
The bound for $S_{\mathrm{od}}(t)$
follows from Lemma \ref{lem:DecayPropG}
since $S_{\mathrm{od;I}}(t)$
contains all the non-trivial elements of $S_{\mathrm{od}}(t)$.
\end{proof}


\section{Stochastic transformations}
\label{sec:STT}

In this section we set out to derive a mild
formulation for the SPDE satisfied by
the process
\begin{equation}
\label{eq:sps:def:V}
V(t) = T_{-\Gamma(t)} [ U(t)]
  - \Phi_\s ,
\end{equation}
which measures
the deviation from the traveling wave $\Phi_\s$ in the coordinate $\xi=x-\G(t)$.
After recalling several results from \cite{Hamster2017}
concerning the stochastic phaseshift,
we focus on the new extra second-order nonlinearity that appears in our setting.
We use the results from \S\ref{sec:SplitSem} to rewrite
this term in such a way that an effective mild integral equation
can be formulated that does not involve second derivatives.
We obtain estimates on all the nonlinear terms in \S\ref{sec:BoundsNl}
and rigorously verify that $V$ indeed satisfies this mild equation in \S\ref{sec:MildForm}.

We start by introducing the
nonlinearity \begin{equation}\label{eq:stt:DefR}
\begin{array}{lcl}
\mathcal{R}_\s
(v)
& = &
 \kappa_{\sigma}(\Phi_\s + v , \psi_{\mathrm{tw}} )
\rho \partial_{\xi\xi} [\Phi_\s + v]
\\[0.2cm]
& & \qquad
 + f(\Phi_\s + v)
+ \sigma^2  b(\Phi_\s + v, \psi_{\mathrm{tw}} ) \partial_\xi[ g(\Phi_\s + v) ]
\\[0.2cm]
& & \qquad
+ \Big[c_\s + a_{\sigma}\big(\Phi_\s + v , c_\s , \psi_{\mathrm{tw}}
\big) \Big] [\Phi_\s' + v'] ,
\end{array}
\end{equation}
together with
\begin{equation}
\begin{array}{lcl}
\mathcal{S}_{\s}(v)
& = &
     g( \Phi_\s + v )
     + b( \Phi_\s + v , \psi_{\mathrm{tw}}) [\Phi_\s' + v'].
\\[0.2cm]
\end{array}
\end{equation}
In \cite[{\S}5]{Hamster2017} we established that the shifted process $V$ can be interpreted as a
weak solution to the SPDE
\begin{equation}
\label{eq:WeakV}
\begin{array}{lcl}
d V & = &
\mathcal{R}_\s(V) \, dt
 + \sigma \mathcal{S}_\s(V) d\b_t .
\end{array}
\end{equation}
However, in our case here $\kappa_{\sigma}$ is a matrix rather than a
scalar. This means that we cannot transform \sref{eq:WeakV} into a semilinear problem
by a simple time transformation. But,
we can improve individual components of the system
by rescaling time with the diagonal elements $\k_{\s;i}$.

To this end, we follow \cite[Lem. 3.6]{Hamster2017}
to find a constant $K_{\kappa} > 0$
for which
\begin{equation}
\label{eq:prlm:kappa:glb:ests}
1 \leq  \kappa_{\sigma;i}(\Phi_\s + v, \psi_\mathrm{tw})
  \le K_{\kappa}
\end{equation}
holds for every
$\sigma \in (-\delta_{\sigma}, \delta_{\sigma})$,
every $v \in H^1$ and every $1 \le i \le n$.
Upon introducing the transformed time
\begin{equation}
\tau_{i}(t, \omega) = \int_0^t
  \kappa_{\sigma;i}\big( \Phi_\s + V(s, \omega) , \psi_{\mathrm{tw}} \big)  \, d s ,
\end{equation}
the bound \sref{eq:prlm:kappa:glb:ests}
allows us to conclude that
$t \mapsto \tau_i(t)$ is a continuous
strictly increasing $(\mathcal{F}_t)$-adapted
process that satisfies
\begin{equation}
\label{eq:mild:ineqs:tau:phi}
t \le \tau_i(t) \le K_{\kappa} t
\end{equation}
for $0 \le t \le T$.
In particular, we can define a map
\begin{equation}
\label{eq:mild:defn:t:phi:spaces}
t_i: [0, T] \times \Omega \to [0, T]
\end{equation}
for which
\begin{equation}
\label{eq:mild:defn:t:phi}
\tau_i(t_i(\tau, \omega), \omega) = \tau.
\end{equation}



This in turn allows us to introduce the
time-transformed map
\begin{equation}
\oV_i: [0,T] \times \Omega \to L^2
\end{equation}
that acts as
\begin{equation}
\label{eq:mld:def:ovl:v}
\begin{array}{lcl}
\overline{V_i}(\tau, \omega)
& = & V\big( t_i(\tau, \omega) , \omega \big) .
\\[0.2cm]
\end{array}
\end{equation}

Upon introducing
\begin{equation}
\label{eq:md:overline:r:sigma}
\begin{array}{lcl}
\overline{\mathcal{R}}_{\s;i}
(v)
& = &
  \kappa_{\sigma;i}(\Phi_\s + v , \psi_{\mathrm{tw}})^{-1}
    \mathcal{R}_{\s}(v)
  - \mathcal{L}_{\mathrm{tw}} v
\\[0.2cm]
\end{array}
\end{equation}
together with
\begin{equation}
\label{eq:md:overline:s:sigma}
\begin{array}{lcl}
\overline{\mathcal{S}}_{\s;i}(v)
& = &
\kappa_{\sigma;i}(\Phi_\s + v,\psi_{\mathrm{tw}})^{-1/2}
\mathcal{S}_{\s}(v),
\\[0.2cm]
\end{array}
\end{equation}
it is possible to follow \cite[Prop. 6.3]{Hamster2017} to show that $\overline{V}_i$ is a weak solution of
\begin{equation}
\label{eq:mild:weak:formul}
d \overline{V}_i
=
\big[\mathcal{L}_{\mathrm{tw}} \overline{V}_i +
  \overline{\mathcal{R}}_{\s;i}(\overline{V}_i) \big] \, d \tau
 + \sigma \overline{\mathcal{S}}_{\s;i}( \overline{V}_i) d\overline{\b}_{\tau;i}
\end{equation}
for every $1 \le i \le n$,
in which $(\overline{\b}_{\tau;i})_{\tau \ge 0}$
denotes the time-transformed
Brownian motion that is now adapted
to an appropriately transformed filtration
$(\overline{\mathcal{F}}_{\tau;i})_{\tau \ge 0}$;
see \cite[Lem. 6.2]{Hamster2017}.

The nonlinearity
$\overline{\mathcal{R}}_{\s;i}$
is less well-behaved than its counterpart
from \cite[Prop. 6.3]{Hamster2017}
since it still contains second order derivatives.
In order to isolate these terms,
we pick any $v \in H^1$ and
introduce the diagonal matrix
\begin{equation}\label{eq:DefPhi}
\phi_{\sigma;i}(v) =
  \big[\k_{\s;i}(\Phi_\s+v,\psi_{\mathrm{tw}}) \big]^{-1} \k_{\sigma}(\Phi_\s + v, \psi_{\mathrm{tw}} )
  - I
\end{equation}
together with the function
\begin{align}
\Upsilon_{\s;i}(v)&=\rho\phi_i(v)\p_{\xi} v .
\label{eq:DefUpsilon}
\end{align}

We note that $\p_{\xi} \Upsilon_{\s;i}$
can be considered as the error
caused by allowing unequal diffusion coefficients
in our main structural assumption (HDt).
Indeed,
upon defining our final nonlinearity
implicitly by
imposing the splitting
\begin{equation}
\label{eq:transf:def:w}
\overline{\mathcal{R}}_{\s;i}(v)=
\mathcal{W}_{\s;i}(v)+\p_{\xi} \Upsilon_{\s;i}(v),
\end{equation}
our first main result
states that $\mathcal{W}_{\s;i}$
is well-behaved in the sense
that it admits bounds that are similar
to those derived for the full nonlinearity
$\overline{\mathcal{R}}$ in \cite{Hamster2017}.
Indeed, it
depends at most quadratically on $\norm{v}_{H^1}$
but not on $\nrm{v}_{H^2}$.
Note furthermore that
$\Phi_{\sigma}$ was constructed in such a way
that $\overline{\mathcal{R}}(0) = 0$.

\begin{prop}
\label{prp:fnl:bnds}
Assume that (HDt), (HSt) and (HTw) all hold and fix $1 \le i \le n$.
Then there exist constants $K > 0$ and $\delta_v > 0$
so that for any
$0 \le \sigma \le \delta_{\sigma}$
and any
$v \in H^1$,
the following properties hold true.
\begin{itemize}
\item[(i)]{
We have the bound
\begin{equation}
\label{eq:fnl:rs:glb}
\begin{array}{lcl}
\norm{
\mathcal{W}_{\s;i}(v)
}_{L^2}
& \le &
 K \sigma^2 \norm{v}_{H^1}
+ K \norm{v}_{H^1}^2 \big[
  1
  + \norm{v}_{L^2}^2
  + \sigma^2 \norm{v}_{L^2}^3
\big] ,
\end{array}
\end{equation}
together with
\begin{equation}
\label{eq:mld:bnd:upsilon}
\norm{\Upsilon_{\s;i}(v)}_{L^2} \leq  K \s^2 \nrm{v}_{H^1}.
\end{equation}
}
\item[(ii)]{
We have the estimate
\begin{equation}
\norm{
\overline{\mathcal{S}}_{\s;i}(v)
}_{L^2}
\le K\big[ 1 +  \norm{v}_{H^1}
  \big].
\end{equation}
}
%
\item[(iii)]{
  If $\norm{v}_{L^2} \le \delta_v$, then we have
  the identities
  \begin{equation}\label{eq:MildNonProjZero}
    \langle \overline{\mathcal{R}}_{\s;i}(v) , \psi_{\mathrm{tw}} \rangle_{L^2}
    =
    \langle \overline{\mathcal{S}}_{\s;i}(v) , \psi_{\mathrm{tw}} \rangle_{L^2} = 0.
  \end{equation}
}
\end{itemize}
\end{prop}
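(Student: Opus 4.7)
The plan hinges on one structural observation: because $\phi_{\sigma;i}(v)$ defined in \sref{eq:DefPhi} depends on $v$ only through $L^2$-inner products against $\psi_{\mathrm{tw}}$, it is constant in the spatial variable $\xi$. Consequently
\begin{equation*}
\p_\xi \Upsilon_{\sigma;i}(v) = \rho\phi_{\sigma;i}(v)\p_{\xi\xi}v,
\end{equation*}
and the identity $\kappa_{\sigma;i}^{-1}\kappa_\sigma = I+\phi_{\sigma;i}(v)$ shows that subtracting $\p_\xi \Upsilon_{\sigma;i}(v)$ from $\overline{\mathcal{R}}_{\sigma;i}(v)$ in \sref{eq:transf:def:w} exactly cancels the term $[\kappa_{\sigma;i}^{-1}\kappa_\sigma-I]\rho\p_{\xi\xi}v$ that survives after subtracting $\mathcal{L}_{\mathrm{tw}}v$ from $\kappa_{\sigma;i}^{-1}\mathcal{R}_\sigma(v)$. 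Hence $\mathcal{W}_{\sigma;i}(v)$ is an expression in $(v,\p_\xi v)$ alone, while the $\p_{\xi\xi}\Phi_\sigma$ contributions that remain automatically lie in $L^2$ because $\Phi_\sigma\in\mathcal{U}_{H^2}$.

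With this splitting in place, the bound \sref{eq:mld:bnd:upsilon} is immediate: the diagonal entries of $\kappa_\sigma$ differ from $\kappa_{\sigma;i}$ by $\tfrac{1}{2}\sigma^2(\rho_j^{-1}-\rho_i^{-1})b(\Phi_\sigma+v,\psi_{\mathrm{tw}})^2$, and the cutoffs $\chi_{\mathrm{low}},\chi_{\mathrm{high}}$ in \sref{eq:mr:def:b} keep $b$ uniformly bounded, so each entry of $\phi_{\sigma;i}(v)$ is pointwise $O(\sigma^2)$. For \sref{eq:fnl:rs:glb}, I would Taylor-expand $f(\Phi_\sigma+v)$ around $\Phi_\sigma$ and pair the linear piece with $\mathcal{L}_{\mathrm{tw}}v$ using $(\Phi_\sigma-\Phi_0,c_\sigma-c_0)=O(\sigma^2)$; the discrepancies between $c_\sigma$ and $c_0$ and between $Df(\Phi_\sigma)$ and $Df(\Phi_0)$ contribute the linear-in-$v$, quadratic-in-$\sigma$ piece, while the genuine nonlinearities coming from $f$, from the $\sigma^2 b\,\p_\xi g$-term, and from $a_\sigma\cdot(\Phi_\sigma'+\p_\xi v)$ assemble as products of uniformly bounded factors with $\|v\|_{H^1}^2$; the $\chi_{\mathrm{low}}^{-1}$ denominators inside $a_\sigma$ account for the $\sigma^2\|v\|_{L^2}^3$ tail. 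The vanishing $\mathcal{W}_{\sigma;i}(0)=0$ is inherited from the frozen-wave equation satisfied by $\Phi_\sigma$. The whole argument mirrors the corresponding estimates in \cite{Hamster2017}, with the scalar $\kappa_\sigma$ there replaced by the $i$-th diagonal entry here. Bound (ii) for $\overline{\mathcal{S}}_{\sigma;i}$ is much softer: $\kappa_{\sigma;i}^{-1/2}$ and $b$ are uniformly bounded, $\|g(\Phi_\sigma+v)\|_{L^2}\leq C(1+\|v\|_{L^2})$ follows from $g(u_\pm)=0$ with $g$ Lipschitz and $\Phi_\sigma\to u_\pm$ exponentially, and $\|\Phi_\sigma'+\p_\xi v\|_{L^2}\leq C(1+\|v\|_{H^1})$.

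For (iii), the adjoint identity $\mathcal{L}_{\mathrm{tw}}^*\psi_{\mathrm{tw}}=0$ reduces the first identity in \sref{eq:MildNonProjZero} to $\langle\mathcal{R}_\sigma(v),\psi_{\mathrm{tw}}\rangle_{L^2}=0$. Writing $u=\Phi_\sigma+v$ and integrating the second-order term by parts—which is legitimate because $\p_\xi u\in L^2$, $\psi_{\mathrm{tw}}\in H^2$ and $\kappa_\sigma$ is $\xi$-independent, so the boundary terms at $\pm\infty$ vanish—then substituting the defining relation \sref{eq:mr:def:a} for $a_\sigma$ collapses every $\kappa_\sigma$-, $f$-, $\p_\xi g$- and convective contribution into the single expression $a_\sigma\bigl[\langle\p_\xi u,\psi_{\mathrm{tw}}\rangle_{L^2}-\chi_{\mathrm{low}}\bigl(\langle\p_\xi u,\psi_{\mathrm{tw}}\rangle_{L^2}\bigr)\bigr]$. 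Since $\langle\p_\xi\Phi_\sigma,\psi_{\mathrm{tw}}\rangle_{L^2}=1+O(\sigma^2)$ by \sref{eq:mr:hs:norm:cnd:psitw}, taking $\delta_v$ sufficiently small forces $\langle\p_\xi u,\psi_{\mathrm{tw}}\rangle_{L^2}\geq 1/2$, at which $\chi_{\mathrm{low}}$ acts as the identity and the bracket vanishes. The parallel computation for $\overline{\mathcal{S}}_{\sigma;i}$ exploits the defining formula \sref{eq:mr:def:b} for $b$ together with the bound $|\langle g(u),\psi_{\mathrm{tw}}\rangle_{L^2}|\leq K_{\mathrm{high}}$ (valid on $\|v\|_{L^2}<\delta_v$), so that $\chi_{\mathrm{high}}$ also collapses to the identity. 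I expect the main technical hurdle to be the bookkeeping in part (i): one must simultaneously track expansions in $\sigma$, in $v$, in $\Phi_\sigma-\Phi_0$ and in $c_\sigma-c_0$, and check that the matrix upgrade of $\kappa_\sigma$ does not spoil the uniformity of any coefficient estimate—a check which ultimately reduces to invoking \sref{eq:prlm:kappa:glb:ests} together with the bounds $\rho_{\mathrm{min}}\leq\rho_i\leq\rho_{\mathrm{max}}$.
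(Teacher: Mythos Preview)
Your proposal is correct and follows essentially the same route as the paper: the paper also isolates the $O(\sigma^2)$ bound on $\phi_{\sigma;i}$ as a separate lemma, then reduces the $\mathcal{W}_{\sigma;i}$ estimate to the analogous bounds from \cite{Hamster2017} via an explicit splitting $\mathcal{W}_{\sigma;i}=\mathcal{W}_{\sigma;I,i}-\mathcal{I}_{\sigma;I,i}[\Phi_\sigma'+v']$ that separates out the $a_\sigma$-contribution. Parts (ii) and (iii) are in fact deferred wholesale to \cite[Prop.~8.1]{Hamster2017}, so your direct sketch of the cut-off argument is more explicit than what appears here.
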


The second main result of this section
formulates a mild representation for solutions to
\sref{eq:mild:weak:formul}.
Items (i)-(iv) are included for completeness and are analogous to the results in
\cite[Prop. 6.3]{Hamster2017}. However, item (v) is specific
to our situation because of the presence of the error term $\Upsilon_{\s;i}$.
Indeed, we shall need to exploit the techniques developed in {\S}\ref{sec:SplitSem}
to transfer the troublesome $\partial_{\xi}$ present in \sref{eq:transf:def:w} from the
$\Upsilon_{\s;i}$ term to the semigroup.
Nevertheless, the integral involving $\partial_{\xi} S$ is integrable in $H^{-1}$
but not necessarily in $L^2$.

\begin{prop}
\label{prp:MildTransSys}
Assume that (HDt), (HSt), (HTw) are all satisfied.
Then the map
\begin{equation}
\overline{V}_i: [0, T] \times \Omega \to L^2
\end{equation}
defined by the transformations
\sref{eq:sps:def:V}
and
\sref{eq:mld:def:ovl:v}
satisfies the following properties.
\begin{itemize}
\item[(i)]{
  For almost all $\omega \in \Omega$,
  the map $\tau \mapsto \overline{V_i}(\tau; \omega)$
  is of class $C\big([0,T]; L^2 \big)$.
}
\item[(ii)]{
  For all $\tau \in [0, T]$, the map
  $\omega \mapsto \overline{V_i}(\tau,\omega)$
  is $(\overline{\mathcal{F}}_{\tau;i})$-measurable.
}
\item[(iii)]{
  We have the inclusion
  \begin{equation}
     \begin{array}{lcl}
        \overline{V_i} \in \mathcal{N}^2\big([0,T];
          (\overline{\mathcal{F}})_{\tau;i} ; H^1 \big) ,
     \end{array}
  \end{equation}
  together with
  \begin{equation}
     \begin{array}{lcl}
        \overline{\mathcal{S}}_{\s;i}(\overline{V_i})
          \in \mathcal{N}^2\big([0,T];
          (\overline{\mathcal{F}})_{\tau;i} ; L^2 \big) .
     \end{array}
  \end{equation}
}
\item[(iv)]{For almost all $\omega \in \Omega$, we have the inclusion
  \begin{align}\label{eq:WIntoL2}
    \mathcal{W}_{\s;i}\big( \oV_i(\cdot, \omega) \big)
      &\in L^1([0,T]; L^2)
      \end{align}
 together with
 \begin{align}     \Upsilon_{\s;i}\big(\oV_i(\cdot, \omega)\big)&\in L^1([0,T]; L^2) .
  \end{align}}
\item[(v)]{
For almost all $\omega \in \Omega$,
the identity
\begin{align}\begin{split}\label{eq:MildEqTimeTrans}
\overline{V_i}(\tau)=&S(\tau)\overline{V_i}(0)
+\int_0^\tau S(\tau-\tau')\mathcal{W}_{\s;i}\big(\overline{V}_i(\tau')\big) \,d\tau'
 +\sigma \int_0^{\tau} S(\tau-\tau')\overline{\mathcal{S}}_{\s;i}
   \big(\overline{V}_i(\tau') \big)d\overline{\b}_{\tau';i}\\[0.2cm]
&+\int_0^{\tau} \p_{\xi} S(\tau-\tau') Q  \Upsilon_{\s;i}\big(\oV_i(\tau')\big) \, d\tau'
 +\int_0^{\tau} \Lambda(\tau-\tau')\Upsilon_{\s;i}\big(\oV_i(\tau') \big) \, d\tau'
\\[0.2cm]
& +\int_0^{\tau} S(\tau-\tau')  P_\xi \Upsilon_{\s;i}\big(\oV_i(\tau')\big) \, d\tau'
 \end{split}
\end{align}
holds for all $ \tau \in [0, T]$.
}
\end{itemize}
\end{prop}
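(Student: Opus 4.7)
The plan is to follow the overall strategy of \cite[Prop. 6.3]{Hamster2017}, noting that items (i)--(iv) transfer with only cosmetic changes while item (v) requires the new machinery from \S\ref{sec:SplitSem}. For items (i)--(iii), I would start from \sref{eq:mild:weak:formul} and exploit the fact that the two-sided bound \sref{eq:mild:ineqs:tau:phi} makes $t_i(\tau,\omega)$ a Lipschitz $(\mathcal{F}_t)$-adapted time-change with almost surely bounded derivative. Thus continuity in $L^2$ and $(\overline{\mathcal{F}}_{\tau;i})$-measurability follow directly from the corresponding properties of $V$ established as in \cite[Prop. 2.1]{Hamster2017}. The $\mathcal{N}^2$-inclusion in $H^1$ is then inherited from the variational estimates of Liu--R\"ockner applied to \sref{eq:WeakV}, because the coercivity of $\rho\partial_{\xi\xi}$ continues to hold with possibly different diffusion coefficients --- one simply replaces $\rho$ by $\rho_\mathrm{min}$ in the lower bound. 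Item (iv) then follows by combining (iii) with the bounds of Proposition \ref{prp:fnl:bnds}, since $\norm{\mathcal{W}_{\s;i}(v)}_{L^2}$ is at worst quadratic in $\norm{v}_{H^1}$ and $\norm{\Upsilon_{\s;i}(v)}_{L^2}$ is linear in $\norm{v}_{H^1}$, both multiplied by factors that stay bounded when $\norm{v}_{L^2}$ stays bounded.

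Item (v) is the substantive step. Formally, using the splitting \sref{eq:transf:def:w}, the standard variational-to-mild conversion applied to \sref{eq:mild:weak:formul} would read
\begin{equation}
\overline{V}_i(\tau) = S(\tau)\overline{V}_i(0) + \int_0^\tau S(\tau-\tau')\big[\mathcal{W}_{\s;i}(\overline{V}_i) + \partial_\xi\Upsilon_{\s;i}(\overline{V}_i)\big]\,d\tau' + \sigma\int_0^\tau S(\tau-\tau')\overline{\mathcal{S}}_{\s;i}(\overline{V}_i)\,d\overline{\beta}_{\tau';i}.
\end{equation}
The obstruction is that $\Upsilon_{\s;i}(\overline{V}_i)$ only lies in $L^2$, so $\partial_\xi \Upsilon_{\s;i}(\overline{V}_i)$ is merely in $H^{-1}$ and $S(\tau-\tau')\partial_\xi \Upsilon_{\s;i}(\overline{V}_i)$ is not directly defined as an $L^2$-valued Bochner integrand. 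My plan is to absorb this derivative into the semigroup via the identity \sref{eq:spl:eqn:with:comm},
\begin{equation}
S(t)\partial_\xi = \partial_\xi S(t)Q + \Lambda(t) + S(t)P_\xi,
\end{equation}
whose right-hand side, by Proposition \ref{prp:MainPropSplit} and Corollary \ref{cor:LambdaExt}, extends to a bounded linear operator on $L^2$ for every $t>0$ with the integrable singularity $t^{-1/2}$ coming only from the $\partial_\xi S(t)Q$ piece. Substituting this decomposition into the formal mild form and applying it to $\Upsilon_{\s;i}(\overline{V}_i(\tau'))$ produces exactly the three additional $\Upsilon_{\s;i}$-terms appearing on the right-hand side of \sref{eq:MildEqTimeTrans}.

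The remaining task is to justify this manipulation rigorously. My approach is to test the weak form \sref{eq:mild:weak:formul} against $S(\tau-\tau')^* \phi$ for $\phi\in H^2$, apply It\^o's formula to $\tau'\mapsto \langle \overline{V}_i(\tau'),S(\tau-\tau')^*\phi\rangle_{L^2}$ (legitimate because $S(\tau-\tau')^*\phi\in H^2$ and the nonlinearity pairs against $H^2$ test functions), and then integrate by parts in space in the pairing $\langle\partial_\xi\Upsilon_{\s;i}(\overline{V}_i),S(\tau-\tau')^*\phi\rangle_{L^2}$ to shift $\partial_\xi$ onto the semigroup. Recognizing that $\partial_\xi S(\tau-\tau')^*\phi$ is the adjoint acting of the decomposition above, and using density of $H^2$ in $L^2$ together with the Proposition \ref{prp:MainPropSplit} bounds, I would promote the resulting weak identity to the $L^2$-pointwise statement \sref{eq:MildEqTimeTrans}. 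The key integrability check that underpins this is that $(\tau-\tau')^{-1/2}\norm{\Upsilon_{\s;i}(\overline{V}_i(\tau'))}_{L^2}\le C\s^2(\tau-\tau')^{-1/2}\norm{\overline{V}_i(\tau')}_{H^1}$ is integrable on $[0,\tau]$ by Cauchy--Schwarz against the $\mathcal{N}^2$-bound from (iii); this is precisely the mechanism that makes the problematic $\partial_\xi S(\tau-\tau')Q$ term converge in $L^2$ and is the essential reason the splitting in Proposition \ref{prp:fnl:bnds} into $\mathcal{W}_{\s;i}$ and $\partial_\xi\Upsilon_{\s;i}$ was chosen.
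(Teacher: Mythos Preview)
Your overall strategy matches the paper's: items (i)--(iv) are indeed inherited from \cite[Prop.~5.1, 6.3]{Hamster2017}, and for (v) the paper also tests the weak formulation against $S^*(\tau-\tau')\eta$, applies an It\^o formula, and then transfers the derivative onto the semigroup via \sref{eq:spl:eqn:with:comm} (this is Lemmas~\ref{lem:MildForm1} and~\ref{lem:mld:approxInH1}).

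Two technical points need correction. First, the test function must lie in $H^3$, not $H^2$: for the It\^o step you need $\zeta(\tau')=S^*(\tau-\tau')\eta$ to be of class $C^1([0,\tau];H^1)$, and at $\tau'=\tau$ one has $\zeta'(\tau)=-\mathcal{L}_{\mathrm{tw}}^*\eta$, which lies in $H^1$ only if $\eta\in H^3$. Second, your final Cauchy--Schwarz claim fails because $\int_0^\tau(\tau-\tau')^{-1}\,d\tau'=\infty$; the integral $\int_0^\tau\partial_\xi S(\tau-\tau')Q\,\Upsilon_{\s;i}(\overline{V}_i(\tau'))\,d\tau'$ need not converge in $L^2$. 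The paper flags this explicitly just before the proposition: the integral is only guaranteed to converge in $H^{-1}$ (using $\norm{\partial_\xi S(t)Q}_{L^2\to H^{-1}}\le M$), and \sref{eq:MildEqTimeTrans} is established by testing against $\eta\in H^3$ and invoking density of $H^3$ in $H^1$ together with separability of $H^{-1}$. This does not affect the later stability estimates, since in \S\ref{sec:nls} one works componentwise and the $i$-th component of $\Upsilon_{\s;i}$ vanishes, so only $\partial_\xi S_{\mathrm{od}}$ appears for short times.
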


\subsection{Bounds on nonlinearities}
\label{sec:BoundsNl}
In this section we set out to prove Proposition \ref{prp:fnl:bnds}.
In order to be able to write the nonlinearities
in a compact fashion,
we introduce the expression
\begin{equation}
\label{eq:mr:def:j}
\begin{array}{lcl}
\mathcal{J}_{\sigma}(u )
& = &
  \kappa_{\sigma}(u , \psi_{\mathrm{tw}})^{-1}
  \Big[
    f(u )
    +c_{\sigma} \partial_\xi u
    + \sigma^2 b( u, \psi_{\mathrm{tw}})
      \partial_\xi [g(u )]
  \Big]
\\[0.2cm]
\end{array}
\end{equation}
for any $u \in \mathcal{U}_{H^1}$.
This allows us to define
\begin{equation}
\mathcal{Q}_{\s}
     (v)
=
   \mathcal{J}_{\sigma}
   (\Phi_{\sigma} + v  )
   - \mathcal{J}_{\sigma}( \Phi_{\sigma}  )
     + [ \rho \partial_{\xi \xi } - \mathcal{L}_{\mathrm{tw}} ]v
\end{equation}
for any $v \in H^1$,
which is the residual upon linearizing $\mathcal{J}_{\sigma}(\Phi_\s+V)$ around $\Phi_{\sigma}$,
up to $O(\sigma^2)$ corrections. Indeed, we
can borrow the following bound from \cite{Hamster2017}.
\begin{cor}
\label{cor:swv:msigma:bnds}
Consider the setting of Proposition \ref{prp:fnl:bnds}.
There exists
$K > 0$ so that
for any
$0 \le \sigma \le \delta_{\sigma}$
and any
$v\in H^1 $
we have the estimate
\begin{equation}
\begin{array}{lcl}
\norm{\mathcal{Q}_{\s}(v)}_{L^2}
& \le &
K\big[\sigma^2+\nrm{v}_{L^2}\big] \norm{v}_{H^1}
\\[0.2cm]
& & \qquad
+  K\big[1 +(1+\s^2)\norm{v}_{L^2}+ \s^2\norm{v}_{L^2}^2\big]\norm{v}_{H^1}^2 ,
\end{array}
\end{equation}
together with
\begin{equation}
\begin{array}{lcl}
\abs{ \langle \mathcal{Q}_{\s}(v) , \psi_{\mathrm{tw}} \rangle_{L^2} }
& \le &
K \big[ 1 + \norm{v}_{H^1}  \big]
     \norm{v}_{L^2}
     \norm{v}_{L^2}
\\[0.2cm]
& & \qquad
+ K \big[
  \sigma^2 +
  \norm{v}_{L^2}
    \big]
     \norm{v}_{L^2}
\\[0.2cm]
& & \qquad
+ K \sigma^2
    \norm{v}_{H^1} \norm{v}_{L^2}^2
    \norm{v}_{L^2}
\\[0.2cm]
& & \qquad
  + K \sigma^2
    \norm{v}_{L^2}^2
    \norm{v}_{H^1}.
\\[0.2cm]
\end{array}
\end{equation}
\end{cor}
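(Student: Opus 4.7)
The corollary essentially restates the analogous bound from \cite{Hamster2017}; the only substantive change is that $\kappa_\sigma$ is now a diagonal matrix rather than a scalar. Because $\kappa_\sigma$ is still diagonal and enjoys the uniform componentwise bounds in \sref{eq:prlm:kappa:glb:ests}, the scalar argument of \cite{Hamster2017} transfers almost verbatim, with $\rho$ replaced by $\rho_{\min}$ or $\rho_{\max}$ as necessary. The plan is therefore to expand $\mathcal{Q}_\sigma(v)$ as a controlled sum of a Taylor remainder and $\sigma$-corrections, and to estimate each piece using Taylor's theorem, boundedness of $D^3 f$ and $Dg$, and the Sobolev embedding $H^1 \hookrightarrow L^\infty$ valid in one space dimension.

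First I would write $\mathcal{J}_\sigma = \mathcal{J}_0 + (\mathcal{J}_\sigma - \mathcal{J}_0)$ with $\mathcal{J}_0(u) = f(u) + c_0 \partial_\xi u$, so that
\begin{equation}
\mathcal{Q}_\sigma(v) = \bigl[ f(\Phi_\sigma + v) - f(\Phi_\sigma) - Df(\Phi_0) v \bigr] + \bigl[ (\mathcal{J}_\sigma - \mathcal{J}_0)(\Phi_\sigma + v) - (\mathcal{J}_\sigma - \mathcal{J}_0)(\Phi_\sigma) \bigr] .
\end{equation}
I would then split the first bracket further as $[f(\Phi_\sigma + v) - f(\Phi_\sigma) - Df(\Phi_\sigma) v] + [Df(\Phi_\sigma) - Df(\Phi_0)]v$. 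The quadratic Taylor remainder is bounded by $\nrm{v}_{L^\infty}\nrm{v}_{L^2} + \nrm{v}_{L^\infty}^2 \nrm{v}_{L^2}$ thanks to the boundedness of $D^3 f$, and Sobolev embedding converts this into contributions of the form $\nrm{v}_{L^2}\nrm{v}_{H^1}$ and $\nrm{v}_{L^2}\nrm{v}_{H^1}^2$. The residual $[Df(\Phi_\sigma) - Df(\Phi_0)]v$ is $O(\sigma^2 \nrm{v}_{L^2})$, since $\nrm{\Phi_\sigma - \Phi_0}_{H^2}  = O(\sigma^2)$ by the perturbative construction of $(\Phi_\sigma, c_\sigma)$. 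The second bracket collects $\sigma$-corrections stemming from $\kappa_\sigma^{-1} - I = O(\sigma^2)$, $c_\sigma - c_0 = O(\sigma^2)$, and the explicit $\sigma^2 b \partial_\xi g$ term. Each produces a factor of $\sigma^2$ in front of an expression that is at most linear in $v$ in $H^1$ at leading order, yielding the $\sigma^2 \nrm{v}_{H^1}$ contribution, while the higher-order-in-$v$ cross-terms account for the $\sigma^2 \nrm{v}_{L^2}^k \nrm{v}_{H^1}^2$ corrections. The uniform bounds \sref{eq:prlm:kappa:glb:ests} on $\kappa_{\sigma;i}$, together with the boundedness of $b$, $g$, $Dg$, $\Phi_\sigma$ and $\Phi_\sigma'$, feed into the constant $K$.

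For the projected bound against $\psi_{\mathrm{tw}}$, two mechanisms produce the improved exponents on $\nrm{v}_{L^2}$. First, for any contribution involving $\partial_\xi v$ or $\partial_\xi g(\Phi_\sigma + v)$, I would integrate by parts to transfer the derivative onto $\psi_{\mathrm{tw}} \in H^2$, trading $\nrm{v}_{H^1}$ for $\nrm{v}_{L^2}$. Second, for the quadratic part of the Taylor remainder, the estimate
\begin{equation}
\bigl| \langle D^2 f(\Phi_\sigma)(v,v), \psi_{\mathrm{tw}} \rangle_{L^2} \bigr| \le \nrm{D^2 f(\Phi_\sigma) \psi_{\mathrm{tw}}}_{L^\infty} \nrm{v}_{L^2}^2
\end{equation}
puts both factors of $v$ in $L^2$, at the cost of the cubic-remainder term absorbing an extra $\nrm{v}_{H^1}$ via Sobolev embedding. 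Combining these with the $\sigma$-corrections treated above produces the four summands in the second displayed inequality.

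The main obstacle is bookkeeping rather than any single difficult estimate: one must track every $\sigma$-power, every factor of $\nrm{v}_{L^2}$ versus $\nrm{v}_{H^1}$, and every cross-term coming from the expansions of $\kappa_\sigma^{-1}$, $b$ and the composed nonlinearities $g(\Phi_\sigma + v)$ and $f(\Phi_\sigma + v)$, so that the resulting pieces repackage into the precise form stated. Otherwise, the analytic ingredients are classical, and the matrix character of $\kappa_\sigma$ enters only through the diagonal, componentwise bounds in \sref{eq:prlm:kappa:glb:ests}, which is precisely why the result can be \emph{inherited} from \cite{Hamster2017} with only cosmetic changes.
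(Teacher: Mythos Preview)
Your proposal is correct and takes essentially the same approach as the paper: both recognize that the result is inherited from \cite{Hamster2017} with only cosmetic changes due to the matrix character of $\kappa_\sigma$. The paper makes this maximally explicit by observing that $\mathcal{Q}_\sigma(v) = \mathcal{M}_{\sigma;\Phi_\sigma,c_\sigma}(v,0) - \mathcal{M}_{\sigma;\Phi_\sigma,c_\sigma}(0,0)$ for the function $\mathcal{M}$ defined in \cite[Eq.~(7.2)]{Hamster2017} and then citing \cite[Cor.~7.5]{Hamster2017} directly, whereas you additionally sketch the Taylor-expansion and integration-by-parts argument that underlies that result.
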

\begin{proof}
Recalling the function $\mathcal{M}$ that was defined in
\cite[Eq. (7.2)]{Hamster2017},
we observe that
\begin{equation}
\mathcal{Q}_{\s}(v) =
\mathcal{M}_{\s;\Phi_{\s},c_{\sigma}}
     (v, 0)  - \mathcal{M}_{\sigma;\Phi_{\sigma} , c_{\sigma}}(0, 0).
\end{equation}
In particular, the desired bounds follow directly
from \cite[Cor. 7.5]{Hamster2017}.
\end{proof}

We now introduce the function
\begin{align}
\mathcal{W}_{\sigma;I,i}(v)
=&\mathcal{Q}_{\s}(v)
 +\phi_{\sigma;i}(v)\Big[
	\mathcal{J}_{\sigma}(\Phi_{\sigma} + v)
	- \mathcal{J}_{\sigma}(\Phi_{\sigma})
	\Big]
\label{eq:Rline:Def}
\end{align}
together with
the notation
\begin{equation}
\begin{array}{lcl}
\mathcal{I}_{\sigma;I,i}(v) & = &
  \Big[\chi_{\mathrm{low}}\big(\langle\partial_\xi[\Phi_{\sigma} + v ] ,
    \psi_{\mathrm{tw}} \rangle_{L^2} \big)\Big]^{-1}
   \langle
     \mathcal{W}_{\sigma;I,i}(v),
     \psi_{\mathrm{tw}}
  \rangle_{L^2}
\\[0.2cm]
& & \qquad
   - \Big[\chi_{\mathrm{low}}\big(\langle\partial_\xi[\Phi_{\sigma} + v ] ,
    \psi_{\mathrm{tw}} \rangle_{L^2} \big)\Big]^{-1}
   \langle
     \Upsilon_{\sigma;i}(v),
     \partial_\xi \psi_{\mathrm{tw}}
  \rangle_{L^2} .
\end{array}
\end{equation}
The following result shows that
these two expressions allow us to split off
the $a_\s$-contribution to $\overline{\mathcal{R}}_{\s;i}$
that is visible in \sref{eq:stt:DefR}.

\begin{lem}
Consider the setting
of Proposition \ref{prp:fnl:bnds}.
Then for any $0 \le \sigma \le \delta_{\sigma}$
and $v \in H^1$,
we have the inclusion $\mathcal{W}_{\s;i}(v)\in L^2$
together with the identity
\begin{equation}
\begin{array}{lcl}
\label{eq:fnl:id:for:r:sigma}
\mathcal{W}_{\s;i}(v)
= \mathcal{W}_{\sigma;I,i}(v)
 - \mathcal{I}_{\sigma;I,i}(v)
  [\Phi_{\sigma}' + v'] .
\\[0.2cm]
\end{array}
\end{equation}
\end{lem}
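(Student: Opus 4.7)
I would prove \eqref{eq:fnl:id:for:r:sigma} by direct algebraic manipulation, starting from the definitions of $\overline{\mathcal{R}}_{\s;i}$ and $\Upsilon_{\s;i}$ and exploiting the pointwise profile equation for $\Phi_\s$ together with two integrations by parts against $\psi_{\mathrm{tw}}$.

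Set $u=\Phi_\s+v$. The definition of $\mathcal{J}_\s$ gives $\k_\s(u,\psi_{\mathrm{tw}})\mathcal{J}_\s(u)=f(u)+c_\s u'+\s^2 b(u,\psi_{\mathrm{tw}})g(u)'$, so $\mathcal{R}_\s(v)=\k_\s\rho u''+\k_\s\mathcal{J}_\s(u)+a_\s u'$ with $a_\s=a_\s(u,c_\s,\psi_{\mathrm{tw}})$. Division by the scalar $\k_{\s;i}$, together with $\k_{\s;i}^{-1}\k_\s=I+\phi_{\s;i}(v)$, yields
\begin{equation*}
\overline{\mathcal{R}}_{\s;i}(v)=(I+\phi_{\s;i})\big[\rho u''+\mathcal{J}_\s(u)\big]+\k_{\s;i}^{-1}a_\s u'-\mathcal{L}_{\mathrm{tw}}v.
\end{equation*}
Since $\phi_{\s;i}(v)$ depends on $v$ only through inner products, it is constant in $\xi$, and hence $\p_\xi\Upsilon_{\s;i}(v)=\rho\phi_{\s;i}v''$. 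Splitting $u''=\Phi_\s''+v''$ and $\mathcal{J}_\s(u)=\mathcal{J}_\s(\Phi_\s)+[\mathcal{J}_\s(u)-\mathcal{J}_\s(\Phi_\s)]$, and collecting the resulting terms into $\mathcal{Q}_\s$- and $\mathcal{W}_{\s;I,i}$-shaped pieces via the definition of $\mathcal{Q}_\s$, the pointwise profile equation $\rho\Phi_\s''+\mathcal{J}_\s(\Phi_\s)=0$ underlying the construction of $(\Phi_\s,c_\s)$ (equivalently $\k_\s(\Phi_\s,\psi_{\mathrm{tw}})\rho\Phi_\s''+f(\Phi_\s)+c_\s\Phi_\s'+\s^2 b(\Phi_\s,\psi_{\mathrm{tw}})g(\Phi_\s)'=0$, cf.\ \cite[Prop.~2.2]{Hamster2017}) kills the stubborn $(I+\phi_{\s;i})[\rho\Phi_\s''+\mathcal{J}_\s(\Phi_\s)]$ contribution. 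Subtracting $\p_\xi\Upsilon_{\s;i}(v)$ then leaves
\begin{equation*}
\mathcal{W}_{\s;i}(v)=\mathcal{W}_{\s;I,i}(v)+\k_{\s;i}^{-1}a_\s(\Phi_\s+v,c_\s,\psi_{\mathrm{tw}})\,[\Phi_\s'+v'].
\end{equation*}

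It then remains to identify $\k_{\s;i}^{-1}a_\s$ with $-\mathcal{I}_{\s;I,i}(v)$. I integrate by parts twice in $\langle\k_\s u,\rho\p_{\xi\xi}\psi_{\mathrm{tw}}\rangle_{L^2}$ (legitimate because $\k_\s$ and $\rho$ are $\xi$-independent and $\psi_{\mathrm{tw}}\in H^2$ decays exponentially, compensating the fact that $\Phi_\s\notin L^2$) to obtain
\begin{equation*}
a_\s(u,c_\s,\psi_{\mathrm{tw}})=-[\chi_{\mathrm{low}}]^{-1}\big\langle\k_\s[\rho u''+\mathcal{J}_\s(u)],\psi_{\mathrm{tw}}\big\rangle_{L^2}.
\end{equation*}
Multiplying by $\k_{\s;i}^{-1}$ and invoking the profile equation together with the identity $\mathcal{J}_\s(u)-\mathcal{J}_\s(\Phi_\s)=\mathcal{Q}_\s(v)-\rho v''+\mathcal{L}_{\mathrm{tw}}v$ reduces the bracket to $\mathcal{W}_{\s;I,i}(v)+\p_\xi\Upsilon_{\s;i}(v)+\mathcal{L}_{\mathrm{tw}}v$. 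The $\mathcal{L}_{\mathrm{tw}}v$ piece contributes nothing since $\mathcal{L}_{\mathrm{tw}}^*\psi_{\mathrm{tw}}=0$, and one further integration by parts converts $\langle\p_\xi\Upsilon_{\s;i},\psi_{\mathrm{tw}}\rangle_{L^2}$ into $-\langle\Upsilon_{\s;i},\p_\xi\psi_{\mathrm{tw}}\rangle_{L^2}$; comparison with the definition of $\mathcal{I}_{\s;I,i}$ produces exactly $\k_{\s;i}^{-1}a_\s=-\mathcal{I}_{\s;I,i}(v)$, closing the identity.

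The inclusion $\mathcal{W}_{\s;i}(v)\in L^2$ is then automatic from the established decomposition: $\mathcal{W}_{\s;I,i}(v)$ contains no second derivatives and is $L^2$-controlled via Corollary \ref{cor:swv:msigma:bnds} combined with the $\O(\s^2)$ size of $\phi_{\s;i}$, while $\mathcal{I}_{\s;I,i}(v)[\Phi_\s'+v']$ is a scalar times an $L^2$ function. The main obstacle is bookkeeping rather than analysis: one must scrupulously distinguish the scalar $\k_{\s;i}$ from the diagonal matrix $\k_\s$, track which arguments ($\Phi_\s$ versus $u$) each $\k$ and $b$ is evaluated at, and verify at every integration by parts that the decay of $\psi_{\mathrm{tw}}$ and its derivatives kills the boundary contributions coming from the non-integrable reference profile. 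Once this is organized, the proof is a finite sequence of substitutions.
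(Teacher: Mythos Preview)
Your proposal is correct and follows essentially the same route as the paper's proof: both rewrite $\overline{\mathcal{R}}_{\s;i}$ via $\k_{\s;i}^{-1}\k_\s=I+\phi_{\s;i}$, invoke the full profile identity $\rho\Phi_\s''+\mathcal{J}_\s(\Phi_\s)=0$ to obtain $\mathcal{W}_{\s;i}(v)=\mathcal{W}_{\s;I,i}(v)+\k_{\s;i}^{-1}a_\s(\Phi_\s+v,c_\s,\psi_{\mathrm{tw}})[\Phi_\s'+v']$, and then identify $\k_{\s;i}^{-1}a_\s=-\mathcal{I}_{\s;I,i}(v)$ using $\mathcal{L}_{\mathrm{tw}}^*\psi_{\mathrm{tw}}=0$ together with integration by parts. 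If anything, you are slightly more explicit than the paper about justifying the integrations by parts and the origin of the pointwise profile equation.
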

\begin{proof}
For any $u \in \mathcal{U}_{H^2}$, the definition \sref{eq:mr:def:a}
implies that
\begin{equation}
a_{\sigma}(u , c_{\sigma} , \psi_{\mathrm{tw}})
=
-
    \Big[ \chi_{\mathrm{low}}\big( \langle \partial_\xi u, \psi_{\mathrm{tw}} \rangle_{L^2} \big) \Big]^{-1}
    \langle \kappa_{\sigma}(u, \psi_{\mathrm{tw}}) \big[ \rho \partial_{\xi \xi} u
       + \mathcal{J}_{\sigma}(u) \big] , \psi_{\mathrm{tw}} \rangle_{L^2}.
\end{equation}
The
implicit definition $a_{\sigma}(\Phi_{\sigma}, c_{\sigma}, \psi_{\mathrm{tw}}) = 0$
hence yields
\begin{equation}
\mathcal{J}_{\sigma}(\Phi_{\s} ) = - \rho \Phi_{\s}''.
\end{equation}
For any $v \in H^2$, this allows us to compute
\begin{equation}
\begin{array}{lcl}
\mathcal{Q}_{\s}
     (v)
& = & \mathcal{J}_{\sigma}
   (\Phi_{\sigma} + v )
       +  \rho  [ \Phi_{\sigma}'' + v'']
        - \mathcal{L}_{\mathrm{tw}} v ,
\end{array}
\end{equation}
which gives
\begin{equation}
\begin{array}{lcl}
\mathcal{W}_{\sigma;I,i}(v)
+ \partial_{\xi} \Upsilon_{\sigma;i}(v)
& = & [\kappa_{\sigma;i}(\Phi_{\sigma} + v , \psi_{\mathrm{tw}} ) ]^{-1}
    \kappa_{\sigma}(\Phi_{\sigma} + v , \psi_{\mathrm{tw}} )
  \big[ \rho [\Phi_{\sigma}'' + v''] + \mathcal{J}_{\sigma}
     (\Phi_{\sigma} + v )
  \big]
\\[0.2cm]
& & \qquad
   - \mathcal{L}_{\mathrm{tw}} v .
\end{array}
\end{equation}
Using the fact that
$\mathcal{L}^*_{\mathrm{tw}} \psi_{\mathrm{tw}} = 0$, we now
readily verify that for $v \in H^2$
we have
\begin{equation}
\mathcal{I}_{\sigma;I,i}(v)
=
[\kappa_{\sigma;i}( \Phi_{\sigma} + v , \psi_{\mathrm{tw}} )]^{-1}
 a_{\sigma}(\Phi_{\sigma} + v , \psi_{\mathrm{tw}}) .
\end{equation}
The result hence follows by
rewriting the definition \sref{eq:stt:DefR}
in the form
\begin{equation}
\begin{array}{lcl}
\mathcal{R}_{\sigma}(v)
& = & \kappa_{\sigma}( \Phi_{\sigma} + v , \psi_{\mathrm{tw}} )
  \Big[
     \rho \partial_{\xi \xi} [ \Phi_{\sigma} + v]
     + \mathcal{J}_{\sigma}( \Phi_{\sigma} + v )
  \Big]
\\[0.2cm]
& & \qquad
  + a_{\sigma}(\Phi_{\sigma} + v , c_{\sigma},  \psi_{\mathrm{tw}})
    [ \Phi_{\sigma}' + v']
\end{array}
\end{equation}
and substituting this into the definition \sref{eq:md:overline:r:sigma} of $\overline{\mathcal{R}}_{\s;i}$.
\end{proof}

In order to obtain the estimates in Proposition \ref{prp:fnl:bnds}
it hence suffices to obtain bounds for $\phi_i$,
$\mathcal{W}_{\s;I,i}$
and $\mathcal{I}_{\s;I,i}$. This can be done in a direct fashion.

\begin{lem}
\label{lem:phi:bound}
Assume that (HDt) and (HSt) are satisfied.
Then there exists a constant $K_{\phi} > 0$
so that
\begin{align}
|\phi_i(v)|\leq \s^2K_\phi
\end{align}
holds for any $v \in L^2$ and $0 \le \s \le \delta_{\s}$.
\end{lem}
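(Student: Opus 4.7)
The plan is to expand the definition of $\phi_{\sigma;i}(v)$ entrywise and observe that the normalization by $\kappa_{\sigma;i}^{-1}$ causes a cancellation of the identity piece, leaving a prefactor of $\sigma^2$. Since $\phi_{\sigma;i}(v)$ is a diagonal matrix by construction, by $|\phi_i(v)|$ we understand the maximum of the absolute values of its diagonal entries (equivalently, its operator norm on $\Real^n$).

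First, I would use the definition \sref{eq:DefPhi} together with \sref{eq:mr:def:kappa} to write the $j$-th diagonal entry of $\phi_{\sigma;i}(v)$ as
\begin{equation}
[\phi_{\sigma;i}(v)]_{jj} = \frac{\kappa_{\sigma;j}(\Phi_\s + v,\psi_{\mathrm{tw}})}{\kappa_{\sigma;i}(\Phi_\s + v,\psi_{\mathrm{tw}})} - 1 = \frac{\sigma^2 b(\Phi_\s + v,\psi_{\mathrm{tw}})^2 \bigl(\tfrac{1}{2\rho_j} - \tfrac{1}{2\rho_i}\bigr)}{\kappa_{\sigma;i}(\Phi_\s + v,\psi_{\mathrm{tw}})}.
\end{equation}
The bound \sref{eq:prlm:kappa:glb:ests} gives $\kappa_{\sigma;i}(\Phi_\s + v,\psi_{\mathrm{tw}}) \geq 1$, so this denominator is harmless.

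Next I would bound $b$ uniformly. The cut-offs $\chi_{\mathrm{low}}\geq \tfrac{1}{4}$ and $|\chi_{\mathrm{high}}|\leq K_{\mathrm{high}}+1$ in the definition \sref{eq:mr:def:b} immediately yield a constant $K_b > 0$ with
\begin{equation}
|b(u,\psi_{\mathrm{tw}})| \leq K_b
\end{equation}
for every $u \in \mathcal{U}_{H^1}$, independent of $v$ and $\sigma$. Combining this with the previous display and setting
\begin{equation}
K_\phi = K_b^2 \max_{1\le i,j \le n} \left|\tfrac{1}{2\rho_j} - \tfrac{1}{2\rho_i}\right|
\end{equation}
gives $|[\phi_{\sigma;i}(v)]_{jj}| \leq \sigma^2 K_\phi$ for all $j$, which is the desired estimate.

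There is no real obstacle here: the argument is a direct algebraic manipulation that exploits the specific form of $\kappa_{\sigma}$. The only point worth noting is structural, namely that the equal-diffusion case $\rho_1 = \cdots = \rho_n$ (which is the setting of \cite{Hamster2017}) produces $\phi_{\sigma;i}\equiv 0$, reflecting the fact that the error term $\partial_\xi \Upsilon_{\sigma;i}$ introduced in \sref{eq:transf:def:w} genuinely measures the obstruction caused by unequal diffusion rates in (HDt).
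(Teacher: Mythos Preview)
Your proof is correct and follows essentially the same approach as the paper's: both compute the $j$-th diagonal entry of $\phi_{\sigma;i}(v)$, extract the factor $\sigma^2 b^2$ from the numerator, bound the denominator below by $1$, and invoke the uniform bound $|b|\le K_b$ coming from the cut-offs. Your version is in fact slightly more direct, since the paper first writes a two-variable Lipschitz-type identity and then specializes to $y=0$, whereas you simplify the single relevant expression immediately; your constant $K_\phi$ also has the pleasant feature of vanishing in the equal-diffusion case.
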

\begin{proof}
For any $x,y\geq 0$ we have the inequality
\begin{align}
\left|\frac{1+\frac{1}{2\rho_j}x}{1+\frac{1}{2\rho_i}x}
   -\frac{1+\frac{1}{2\rho_i}y}{1+\frac{1}{2\rho_i}y}\right|=\frac{1}{4\rho_i\rho_j}\frac{|x-y|}{(1+\frac{1}{2\rho_i}x)(1+\frac{1}{2\rho_i}y)}
   \leq \frac{1}{4\rho_i\rho_j}|x-y| .
\end{align}
Applying these bounds with $y=0$, we obtain
\begin{align}
|\phi^j_i(v)|\leq \frac{\s^2}{4\rho_i\rho_j}|b(\Phi_\s+v)|^2\leq
  \frac{\s^2}{4\rho_{\mathrm{min}}^2}K_b^2,
\end{align}
where the last bound on $b$ follows from Lemma 3.6 in \cite{Hamster2017}.
The result now readily follows.
\end{proof}

\begin{lem}
\label{lem:fnl:rsi}
Consider the setting
of Proposition \ref{prp:fnl:bnds}.
Then there exists $K > 0$
so that for any $v \in H^1$
and $0 \le \sigma \le \delta_{\sigma}$
we have the bound
\begin{equation}
\label{eq:fnl:rsi:glb}
\begin{array}{lcl}
\norm{\mathcal{W}_{\s;I,i}(v)}_{L^2}
& \le &
K \sigma^2 \norm{v}_{H^1}
+ K \norm{v}_{H^1}^2
  \big[
    1 + \norm{v}_{L^2} + \sigma^2 \norm{v}_{L^2}^2
  \big] ,
\\[0.2cm]
\end{array}
\end{equation}
together with
\begin{equation}
\label{eq:fnl:rsi:glb:ip}
\begin{array}{lcl}
\abs{ \mathcal{I}_{\sigma;I,i}(v) }
& \le &
K  \norm{v}_{L^2}
  \big[ \sigma^2 + \norm{v}_{L^2} \big]
+ K \norm{v}_{H^1}
  \big[
     \norm{v}_{L^2}^2 + \sigma^2 \norm{v}_{L^2}^3
  \big]    .
\end{array}
\end{equation}
\end{lem}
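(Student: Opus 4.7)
The plan is to decompose $\mathcal{W}_{\sigma;I,i}(v)$ via its definition \sref{eq:Rline:Def}. The first summand $\mathcal{Q}_\sigma(v)$ is controlled directly by Corollary \ref{cor:swv:msigma:bnds}; its bound $K[\sigma^2 + \norm{v}_{L^2}]\norm{v}_{H^1}$ already contributes the target $K\sigma^2\norm{v}_{H^1}$ term as well as a $K\norm{v}_{L^2}\norm{v}_{H^1} \le K\norm{v}_{H^1}^2$ piece that can be absorbed into the quadratic term of \sref{eq:fnl:rsi:glb}. For the second summand $\phi_{\sigma;i}(v)\bigl[\mathcal{J}_\sigma(\Phi_\sigma + v) - \mathcal{J}_\sigma(\Phi_\sigma)\bigr]$, the decisive input is the pointwise estimate $|\phi_{\sigma;i}(v)| \le K_\phi \sigma^2$ provided by Lemma \ref{lem:phi:bound}, which supplies the $\sigma^2$ prefactor automatically.

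To bound the $L^2$-norm of $\mathcal{J}_\sigma(\Phi_\sigma + v) - \mathcal{J}_\sigma(\Phi_\sigma)$, I would expand according to the explicit formula \sref{eq:mr:def:j} and split into (a) $\kappa_\sigma^{-1}[f(\Phi_\sigma + v) - f(\Phi_\sigma)]$, (b) the $c_\sigma\kappa_\sigma^{-1}\partial_\xi v$ contribution, (c) the $\sigma^2 b\,\partial_\xi g$-type terms, and (d) differences of the scalar prefactors $\kappa_\sigma^{-1}$ and $b$ evaluated at $\Phi_\sigma+v$ versus $\Phi_\sigma$. Each of (a)--(d) admits an estimate of the form $C\norm{v}_{H^1}\bigl[1 + \text{polynomial in }\norm{v}_{H^1}\bigr]$, using the $C^3$-smoothness and cubic growth of $f$ from (HDt), the Lipschitz bounds on $g$ and $Dg$ from (HSt), and the scalar Lipschitz estimates on $b$ already exploited in the proof of Lemma \ref{lem:phi:bound} and in \cite{Hamster2017}. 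Multiplying by $\sigma^2$ and merging the higher-order tails into the $\norm{v}_{H^1}^2$ term inherited from Corollary \ref{cor:swv:msigma:bnds} then yields \sref{eq:fnl:rsi:glb}.

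For the second bound \sref{eq:fnl:rsi:glb:ip} I note that $[\chi_{\mathrm{low}}(\cdots)]^{-1}$ is uniformly bounded by $4$, so the task reduces to estimating the two pairings $\langle \mathcal{W}_{\sigma;I,i}(v), \psi_{\mathrm{tw}}\rangle_{L^2}$ and $\langle \Upsilon_{\sigma;i}(v), \partial_\xi\psi_{\mathrm{tw}}\rangle_{L^2}$. For the first I again decompose along \sref{eq:Rline:Def}, invoking the second inequality of Corollary \ref{cor:swv:msigma:bnds} on the $\mathcal{Q}_\sigma$-piece. The crucial move for the $\phi_{\sigma;i}$-contribution, which is what upgrades $\norm{v}_{H^1}$ to $\norm{v}_{L^2}$ on the right-hand side, is integration by parts against $\psi_{\mathrm{tw}}\in H^2$: every occurrence of $\partial_\xi v$ in the expansion of $\mathcal{J}_\sigma(\Phi_\sigma + v) - \mathcal{J}_\sigma(\Phi_\sigma)$ transfers its derivative onto $\psi_{\mathrm{tw}}$, producing bounds in terms of $\norm{v}_{L^2}\norm{\partial_\xi\psi_{\mathrm{tw}}}_{L^2}$.

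The $\Upsilon$-pairing is the step I expect to be most delicate and is the main point where the argument genuinely differs from \cite{Hamster2017}. Since $\phi_{\sigma;i}(v)$ is a constant (diagonal) matrix in $\xi$, one has $\Upsilon_{\sigma;i}(v) = \rho\phi_{\sigma;i}(v)\partial_\xi v$, and a single integration by parts against $\partial_\xi\psi_{\mathrm{tw}}$ yields
\begin{equation*}
\bigl|\langle\Upsilon_{\sigma;i}(v),\partial_\xi\psi_{\mathrm{tw}}\rangle_{L^2}\bigr|
= \bigl|\rho\phi_{\sigma;i}(v)\langle v,\partial_{\xi\xi}\psi_{\mathrm{tw}}\rangle_{L^2}\bigr|
\le C\sigma^2\norm{v}_{L^2}\norm{\partial_{\xi\xi}\psi_{\mathrm{tw}}}_{L^2},
\end{equation*}
where $\psi_{\mathrm{tw}}\in H^2$ is what makes the right-hand side finite. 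Collecting these contributions and bookkeeping the polynomial tails exactly as in the $L^2$-estimate produces \sref{eq:fnl:rsi:glb:ip}.
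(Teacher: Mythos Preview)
Your approach is correct, but it differs from the paper's in one efficient shortcut and is in fact more careful than the paper on the second estimate.

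For \sref{eq:fnl:rsi:glb}, the paper avoids re-expanding $\mathcal{J}_\sigma$ from scratch by simply observing from the definition of $\mathcal{Q}_\sigma$ that
\[
\mathcal{J}_\sigma(\Phi_\sigma+v)-\mathcal{J}_\sigma(\Phi_\sigma)=\mathcal{Q}_\sigma(v)+(\mathcal{L}_{\mathrm{tw}}-\rho\partial_{\xi\xi})v,
\]
which yields $\mathcal{W}_{\sigma;I,i}(v)=\mathcal{Q}_\sigma(v)+\phi_{\sigma;i}(v)\bigl[\mathcal{Q}_\sigma(v)+(\mathcal{L}_{\mathrm{tw}}-\rho\partial_{\xi\xi})v\bigr]$. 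One then only needs the first bound of Corollary~\ref{cor:swv:msigma:bnds}, the estimate $\norm{(\mathcal{L}_{\mathrm{tw}}-\rho\partial_{\xi\xi})v}_{L^2}\le C\norm{v}_{H^1}$, and Lemma~\ref{lem:phi:bound}. Your plan to expand $\mathcal{J}_\sigma$ into the pieces (a)--(d) would work but essentially reproves part of Corollary~\ref{cor:swv:msigma:bnds}; the rewriting above is the economical move.

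For \sref{eq:fnl:rsi:glb:ip}, the paper's written argument bounds $|\mathcal{I}_{\sigma;I,i}(v)|$ by $C_2\bigl[\norm{\mathcal{W}_{\sigma;I,i}(v)}_{L^2}+\norm{\Upsilon_{\sigma;i}(v)}_{L^2}\bigr]$ and then invokes Corollary~\ref{cor:swv:msigma:bnds} and Lemma~\ref{lem:phi:bound}. Taken literally via the $L^2$-norm bounds this would only give $\sigma^2\norm{v}_{H^1}$-type terms, not the $\sigma^2\norm{v}_{L^2}$ terms the statement claims; the intended reading is to apply the \emph{inner-product} bound of Corollary~\ref{cor:swv:msigma:bnds} to $\langle\mathcal{Q}_\sigma(v),\psi_{\mathrm{tw}}\rangle$ and handle the remaining pairings $\langle(\mathcal{L}_{\mathrm{tw}}-\rho\partial_{\xi\xi})v,\psi_{\mathrm{tw}}\rangle$ and $\langle\Upsilon_{\sigma;i}(v),\partial_\xi\psi_{\mathrm{tw}}\rangle$ by integration by parts, exactly as you propose. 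So on this half your write-up is the more transparent one.
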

\begin{proof}
Note first that we can write $\mathcal{W}_{\sigma;I,i}(v)$ as
\begin{align}
\mathcal{W}_{\sigma;I,i}(v)
=&\mathcal{Q}_{\s}(v)
 +\phi_{\sigma;i}(v)\Big[
	\mathcal{Q}_{\s}(v)+(\mathcal{L}_{\mathrm{tw}}-\rho\p_{\xi\xi}) v
	\Big]
\end{align}
and hence
\begin{align}
\nrm{\mathcal{W}_{\sigma;I,i}(v)}_{L^2}
\leq &\nrm{\mathcal{Q}_{\s}(v)}_{L^2}
 +|\phi_{\sigma;i}(v)|\Big[
	\nrm{\mathcal{Q}_{\s}(v)}_{L^2}+\nrm{(\mathcal{L}_{\mathrm{tw}}-\rho\p_{\xi\xi})v}_{L^2}
	\Big] .
\end{align}
The definition of $\mathcal{L}_{\mathrm{tw}}$ implies that there exists $C_1 > 0$
for which
\begin{equation}
\norm{[\L_{\mathrm{tw}}-\rho\p_{\xi\xi}]v}_{L^2}\leq  C_1\nrm{v}_{H^1}
\end{equation}
holds. The desired bound
hence follows from
Corollary \ref{cor:swv:msigma:bnds} and Lemma \ref{lem:phi:bound}.

Turning to the second estimate,
we note that there is a positive constant $C_2$ for which we have
\begin{align}
\abs{ \mathcal{I}_{\sigma;I,i} (v) } \leq
  C_2\big[ \norm{\mathcal{W}_{\s;I,i}(v)}_{L^2}+\norm{\Upsilon_{\s;i}(v)}_{L^2}\big] .
\end{align}
We can hence again apply Corollary \ref{cor:swv:msigma:bnds} and Lemma \ref{lem:phi:bound},
which yields expressions that can all be absorbed into \sref{eq:fnl:rsi:glb:ip}.
\end{proof}

\begin{proof}[Proof of Proposition \ref{prp:fnl:bnds}]
To obtain \sref{eq:fnl:rs:glb},
we use \sref{eq:fnl:id:for:r:sigma}
together with Lemma \ref{lem:fnl:rsi}
to compute
\begin{equation}
\begin{array}{lcl}
\norm{\mathcal{W}_{\s;i}(v)}_{L^2}
& \le &
\norm{\mathcal{W}_{\s;i}}_{L^2}
+ C_1 \abs{\mathcal{I}_{\s;I,i}(v)}
    \big[ 1 + \norm{v}_{H^1} \big]
\\[0.2cm]
& \le &
C_2 \sigma^2 \norm{v}_{H^1}
+ C_2 \norm{v}_{H^1}^2
  \big[
    1 + \norm{v}_{L^2} + \sigma^2 \norm{v}_{L^2}^2
  \big]
\\[0.2cm]
& & \qquad
+C_2 \norm{v}_{L^2} \big[ \sigma^2 + \norm{v}_{L^2} \big]
    \big[ 1 + \norm{v}_{H^1} \big]
\\[0.2cm]
& & \qquad
+ C_2 \norm{v}_{H^1}
  \big[
     \norm{v}_{L^2}^2 + \sigma^2 \norm{v}_{L^2}^3
  \big]
     \big[ 1 + \norm{v}_{H^1} \big]
\\[0.2cm]
\end{array}
\end{equation}
for some constants $C_1 > 0$ and $C_2 > 0$.
These terms can all be absorbed into
\sref{eq:fnl:rs:glb}.
The bound \sref{eq:mld:bnd:upsilon}
follows from Lemma \ref{lem:phi:bound} and (HDt),
while
 (ii) and (iii) follow directly
from \cite[Prop 8.1]{Hamster2017}.
\end{proof}

\subsection{Mild formulation}
\label{sec:MildForm}
In this section we establish Proposition \ref{prp:MildTransSys}.
We note that items (i)-(iv) follow directly from Propositions 5.1 and 6.3 in
\cite{Hamster2017}, so we focus
here on the integral identity \sref{eq:MildEqTimeTrans}.
We first obtain this identity in a weak sense,
bypassing the need to interpret the term
involving $\Upsilon_{\sigma;i}$ in a special fashion.
We note that
$S^*(t)$ is the adjoint operator of $S(t)$,
which coincides with the semigroup generated by $\L_{\mathrm{tw}}^*$.


\begin{lem}\label{lem:MildForm1} Consider the setting of
Proposition \ref{prp:MildTransSys}
and pick any $\eta \in H^3$.
Then for almost all $\omega \in \Omega$
the identity
\begin{align}\begin{split}
\ip{\oV_i(\tau),\eta}_{L^2}=&
\ip{S(\tau)\oV_i(0)
+\int_0^{\tau} S(\tau-\tau')\mathcal{W}_{\s;i}\big(\oV_i(\tau')\big)\, d\tau'
+ \sigma \int_0^{\tau} S(\tau-\tau')\overline{\mathcal{S}}_{\s;i}\big(\oV_i(\tau')\big)d\overline{\b}_{\tau';i},\eta}_{L^2}
\\[0.2cm]
&+\int_0^{\tau} \ip{ \p_{\xi} \Upsilon_{\s;i}\big(\oV_i(\tau')\big), S^*(\tau-\tau' )\eta}_{H^{-1} ; H^1} \, d \tau'
\end{split}
\end{align}
holds for any $\tau \in [0, T]$.
\end{lem}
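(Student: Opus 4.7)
The plan is to derive the identity from the weak SPDE~\sref{eq:mild:weak:formul} by testing against a time-dependent smooth curve, using a standard variation-of-constants argument. Fix $\tau \in [0,T]$ and $\eta \in H^3$, and introduce the deterministic trajectory $\eta_\tau(s) := S^*(\tau-s)\eta$ for $s \in [0,\tau]$. Since $S^*(t)$ is the analytic semigroup generated by $\mathcal{L}^*_{\mathrm{tw}}$, this curve stays uniformly bounded in $H^3$ and is continuously differentiable into $H^1$ with $\partial_s \eta_\tau(s) = -\mathcal{L}^*_{\mathrm{tw}}\eta_\tau(s)$.

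I would then apply the variational It\^o formula in the Gelfand triple $H^1 \hookrightarrow L^2 \hookrightarrow H^{-1}$ (see e.g.~\cite[Thm.~4.2.5]{LiuRockner}) to the scalar process $s \mapsto \langle \overline{V}_i(s), \eta_\tau(s)\rangle_{L^2}$. The time dependence of the test function contributes $-\langle \overline{V}_i(s), \mathcal{L}^*_{\mathrm{tw}}\eta_\tau(s)\rangle_{L^2}$ under the $ds$-integral, which by the adjoint identity $\langle \mathcal{L}_{\mathrm{tw}} v, w\rangle = \langle v, \mathcal{L}^*_{\mathrm{tw}} w\rangle$ (valid for $v \in H^1$, $w \in H^3$ via integration by parts) cancels exactly the $\mathcal{L}_{\mathrm{tw}}\overline{V}_i$ contribution coming from the drift in~\sref{eq:mild:weak:formul}. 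What remains are the initial datum term, the $\overline{\mathcal{R}}_{\s;i}$ drift term, and the stochastic term, each paired against $S^*(\tau-s)\eta$.

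Next I would insert the splitting $\overline{\mathcal{R}}_{\s;i}(v)=\mathcal{W}_{\s;i}(v)+\p_\xi \Upsilon_{\s;i}(v)$ from~\sref{eq:transf:def:w}. For the initial datum, the $\mathcal{W}_{\s;i}$ contribution (which lies in $L^2$ by Proposition~\ref{prp:fnl:bnds}(i)), and the stochastic integral (whose integrand lies in $L^2$ by Proposition~\ref{prp:fnl:bnds}(ii)), I would move $S^*$ off of $\eta$ using the adjoint identity $\langle f, S^*(t)\eta\rangle_{L^2} = \langle S(t)f, \eta\rangle_{L^2}$, producing the first grouped pairing on the right-hand side of the claim. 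The $\p_\xi \Upsilon_{\s;i}$ term is only $H^{-1}$-valued, but since $\Upsilon_{\s;i}(\overline{V}_i(s)) \in L^2$ by~\sref{eq:mld:bnd:upsilon} and $S^*(\tau-s)\eta \in H^3 \subset H^1$, the $H^{-1}$--$H^1$ duality pairing is well-defined and yields exactly the final integral in the statement.

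The main obstacle will be justifying It\^o's formula rigorously: the drift in~\sref{eq:mild:weak:formul} sits only in $H^{-1}$ because of the $\p_\xi \Upsilon_{\s;i}$ piece, while $\overline{V}_i$ is guaranteed to be in $H^1$ only by Proposition~\ref{prp:MildTransSys}(iii). This fits precisely the setup of the variational It\^o formula on the triple $H^1 \hookrightarrow L^2 \hookrightarrow H^{-1}$, but applying it requires cross-checking integrability of each drift component together with joint measurability in $(\tau',\omega)$; these follow from the bounds in Proposition~\ref{prp:fnl:bnds} combined with Proposition~\ref{prp:MildTransSys}(i)--(iv). A cleaner alternative, should the direct application be awkward, is to first establish the identity for $\eta \in C_c^\infty$ by pointwise testing of~\sref{eq:mild:weak:formul} and then extend to $\eta \in H^3$ by density, using continuity of both sides in the $H^3$ norm of $\eta$ as guaranteed by the same bounds.
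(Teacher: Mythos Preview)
Your proposal is correct and follows essentially the same route as the paper: define the time-dependent test function $\zeta(\tau')=S^*(\tau-\tau')\eta$, apply an It\^o formula to the pairing $\langle \overline{V}_i(\tau'),\zeta(\tau')\rangle$, observe that the $\zeta'$ term cancels the $\mathcal{L}_{\mathrm{tw}}\overline{V}_i$ drift, and then split $\overline{\mathcal{R}}_{\s;i}=\mathcal{W}_{\s;i}+\partial_\xi\Upsilon_{\s;i}$ while moving $S^*$ back onto the $L^2$-valued pieces. The only difference is the It\^o formula invoked: the paper cites \cite[Thm.~1]{DaPratomild} (with $S=I$) for a functional $\phi(\tau',v)=\langle v,\zeta(\tau')\rangle_{H^{-1};H^1}$ that is $C^1$ in time and linear in $v$, whereas \cite[Thm.~4.2.5]{LiuRockner} is stated for $\|X(t)\|_H^2$ rather than a linear functional, so you should either use a version tailored to linear test functions or carry out your density alternative.
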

\begin{proof}
Pick any $\tau \in [0, T]$.
Since $\overline{V_i}\in \mathcal{N}^2([0,T];(\overline{\mathcal{F}}_t);H^1)$
is a weak solution to \sref{eq:mild:weak:formul},
the identity
\begin{equation}
\begin{array}{lcl}
\oV_i(\tau) &=&
  \oV_i(0)
  +\int_0^{\tau} \big[ \mathcal{L}_{\mathrm{tw}} \overline{V}_i(\tau')
    + \overline{\mathcal{R}}_{\sigma;i}\big( \overline{V}_i(\tau') \big) \big]  \, d \tau'
\\[0.2cm]
& & \qquad
  + \sigma \int_0^\tau \overline{S}_{\sigma;i} \big( \oV_i(\tau') \big) \, d\overline{\b}_{\tau';i}
\end{array}
\end{equation}
holds in $H^{-1}$; see \cite[Prop. 6.3]{Hamster2017}.
We note that these integrals are well defined by items (i)-(iv) of Proposition \ref{prp:MildTransSys}.

Following the proof of \cite[Prop 2.10]{KatjaClaudia},
we pick $\eta \in H^3$ and define the function
\begin{align}
\zeta(\tau')=S^*(\tau-\tau')
  \eta
\end{align}
on the interval $[0,\tau]$.
Noting that $\zeta\in C^1([0,\tau], H^1 )$,
we may define the functional
$\phi: [0, \tau] \times H^{-1} \to \Real$
that acts as
\begin{equation}
\phi(\tau' , v) = \langle v , \zeta(\tau') \rangle_{H^{-1} ; H^1},
\end{equation}
which is $C^1$-smooth in the first variable and linear in the second variable.
Applying a standard It\^o formula such as
\cite[Thm. 1]{DaPratomild} (with $S = I$) yields
\begin{equation}
\begin{array}{lcl}
\phi\big(\tau, \overline{V}_i(\tau) \big)
 & = &
   \phi\big(0, \overline{V}_i(0) \big)
\\[0.2cm]
 & & \qquad
    + \int_0^{\tau} \langle  \overline{V}_i(\tau'), \zeta'(\tau') \rangle_{H^{-1}; H^1} \, d \tau'
   + \int_0^\tau \langle \mathcal{L}_{\mathrm{tw}} \overline{V}_i(\tau')   , \zeta(\tau')
     \rangle_{H^{-1} ; H^1}\,d \tau'
 \\[0.2cm]
 & & \qquad
   + \int_0^\tau \langle \overline{\mathcal{R}}_{\sigma;i}
     \big( \overline{V}_i(\tau')\big)   , \zeta(\tau')
       \rangle_{H^{-1} ; H^1} \, d \tau'
\\[0.2cm]
 & & \qquad
   + \sigma \int_0^\tau \langle \overline{\mathcal{S}}_{\sigma;i}
     \big( \overline{V}_i(\tau')\big)   , \zeta(\tau') \rangle_{L^2}
      \, d \overline{\beta}_{\tau';i} .
\end{array}
\end{equation}
Since $\zeta'(t) = - \mathcal{L}^*_{\mathrm{tw}} \zeta(t)$,
the second line in the expression above disappears.
Using the identities
\begin{equation}
\begin{array}{lcl}
\phi\big(\tau, \overline{V}_i(\tau) \big)
 & = & \langle \oV_i(\tau), 
    \eta \rangle_{L^2} ,
\\[0.2cm]
\phi\big(0, \overline{V}_i(0) \big)
 & = & \langle \overline{V}_i(0) , S^*(\tau) 
   \eta \rangle_{L^2}
\\[0.2cm]
 & = & \langle S(\tau) \overline{V}_i(0) ,
         \eta \rangle_{L^2}
\end{array}
\end{equation}
we hence obtain
\begin{equation}
\begin{array}{lcl}
\langle \oV_i(\tau), 
  \eta \rangle_{L^2}
& = &
\ip{S(\tau)\oV_i(0), 
  \eta}_{L^2}
\\[0.2cm]
& & \qquad
  +\int_0^{\tau} \ip{ S(\tau-\tau')\mathcal{W}_{\s;i}(\oV_i(\tau') \big) \, d \tau',
          \eta }_{L^2} \, d \tau'
\\[0.2cm]
& & \qquad
  +\int_0^{\tau} \ip{\p_{\xi} \Upsilon_{\s;i}\big(\oV_i(\tau') \big),
    S^*(t-s) 
    \eta}_{H^{-1}; H^1} \, d\tau'
\\[0.2cm]
& & \qquad
  +\sigma \int_0^{\tau} \ip{ S(\tau- \tau')\overline{S}_{\s;i}\big(\oV_i(\tau') \big)
       ,  
         \eta}_{L^2}  \,  d \overline{\b}_{\tau';i} ,
\end{array}
\end{equation}
as desired.
\end{proof}

\begin{lem}
\label{lem:mld:approxInH1}
Pick $v \in L^2$ together with $\eta \in H^1$ and $t > 0$.
Then we have the identity
\begin{equation}
\label{eq:mld:transfer:derivative}
\langle \partial_{\xi} v , S^*(t) \eta \rangle_{H^{-1} ; H^1}
 = \langle \partial_{\xi} S(t) Q v
      +  \Lambda(t) v
      +  S(t) P_\xi v , \eta \rangle_{L^2} .
\end{equation}
\end{lem}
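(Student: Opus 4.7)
The plan is to verify the identity first for smooth $v$ and then pass to the limit in $L^2$. Concretely, I will first establish the identity for $v \in H^1$, where both sides make sense as ordinary $L^2$ inner products, and then extend to $v \in L^2$ by density, using the fact that all the operators appearing on the right-hand side are bounded on $L^2$ (which is precisely what Proposition \ref{prp:MainPropSplit} provides).

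For $v \in H^1$, I would observe that $\partial_{\xi} v \in L^2 \subset H^{-1}$, and that $S^*(t)\eta \in H^1$ (in fact in $H^2$, since $\eta \in H^1 \subset L^2$ and the adjoint semigroup is also analytic). The duality pairing on the left therefore reduces to the standard $L^2$-inner product, and by moving the semigroup to the other side we obtain
\begin{equation*}
\langle \partial_{\xi} v, S^*(t) \eta \rangle_{H^{-1}; H^1}
= \langle \partial_{\xi} v, S^*(t) \eta \rangle_{L^2}
= \langle S(t) \partial_{\xi} v, \eta \rangle_{L^2}.
\end{equation*}
Applying the commutator identity \sref{eq:spl:eqn:with:comm}, which holds on $H^1$, immediately yields the desired expression for the right-hand side.

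To extend to $v \in L^2$, pick $v_n \in H^1$ with $v_n \to v$ in $L^2$. For the left-hand side, since $\partial_{\xi}: L^2 \to H^{-1}$ is bounded, we have $\partial_{\xi} v_n \to \partial_{\xi} v$ in $H^{-1}$, and continuity of the $H^{-1}/H^1$ pairing (with the fixed element $S^*(t)\eta \in H^1$) gives convergence. For the right-hand side, Lemma \ref{lem:nls:sem:group:decay} shows that $\partial_{\xi} S(t) Q$ and $S(t) P_\xi$ are bounded from $L^2$ into $L^2$ (in fact into $H^1$), and Proposition \ref{prp:MainPropSplit} / Corollary \ref{cor:LambdaExt} provides that $\Lambda(t)$ extends to a bounded operator from $L^2$ into $H^2$. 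Thus each of the three terms converges in $L^2$ when $v_n \to v$, and pairing with $\eta \in H^1 \subset L^2$ yields convergence of the full right-hand side. Passing to the limit in the identity valid for each $v_n$ finishes the proof.

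There is no serious obstacle here, as the heavy lifting was done in Section \ref{sec:SplitSem}: the only non-obvious point is that $\Lambda(t)$ genuinely extends to $L^2$ (rather than merely agreeing with the commutator on $H^1$), but this is exactly the content of Corollary \ref{cor:LambdaExt}. The role of this lemma is to package the extension into a statement that can be applied when analysing the mild formulation \sref{eq:MildEqTimeTrans}, where the troublesome $\partial_\xi$ acting on $\Upsilon_{\sigma;i}$ must be transferred onto the semigroup in order to gain regularity.
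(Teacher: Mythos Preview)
Your proposal is correct and follows essentially the same approach as the paper: verify the identity on $H^1$ via \sref{eq:spl:eqn:with:comm}, then extend to $L^2$ by density using that both sides are bounded linear functionals of $v \in L^2$ (the paper states this in one sentence, invoking Proposition \ref{prp:MainPropSplit}; you spell out the continuity of each term separately). There is no substantive difference.
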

\begin{proof}
For $v \in H^1$, this identity follows directly
from \sref{eq:spl:eqn:with:comm}. For fixed $\eta$ and $ t> 0$,
both sides
of \sref{eq:mld:transfer:derivative} can be interpreted
as bounded linear functions on $L^2$ by
Proposition \ref{prp:MainPropSplit}. In particular,
the result can be obtained by approximating $v \in L^2$ by $H^1$-functions.
\end{proof}

\begin{proof}[Proof of Proposition \ref{prp:MildTransSys}]
As mentioned above, items (i)-(iv) follow directly from
Propositions 5.1 and 6.3 in \cite{Hamster2017}.
Item (v) follows from
Lemmas \ref{lem:MildForm1} and \ref{lem:mld:approxInH1},
using the density of $H^3$ in $H^1$ and the fact that $H^{-1}$ is separable.
\end{proof}


\section{Nonlinear stability of mild solutions}
\label{sec:nls}
In this section we
prove Theorem \ref{thm:mr:orbital:stb}, which
provides an orbital  stability result
for the stochastic wave $(\Phi_{\sigma}, c_{\sigma})$.
In particular,
for any $\e > 0$, $T > 0$
and $\eta > 0$ we recall the
notation
\begin{equation}
\begin{array}{lcl}
 N_{\e}(t)=
 \nrm{V(t)}_{L^2}^2
 +\int_0^te^{-\e(t-s)}\nrm{V(s)}^2_{H^1}ds
\end{array}
\end{equation}
and introduce the
$(\mathcal{F}_t)$-stopping time
\begin{equation}
t_{\mathrm{st}}(T,\e,\eta)
 = \inf\Big\{0 \leq t < T:
     N_{\e}(t)
     > \eta
  \Big\} ,
\end{equation}
writing $t_{\mathrm{st}}(T,\e,\eta) = T$
if the set is empty.
We derive a number of technical regularity estimates
in \S\ref{sec:nls:reg:ests}
that allows us to
exploit the integral identity \sref{eq:MildEqTimeTrans}
to  bound the expectation
of $\sup_{0\leq t\leq t_{\mathrm{st}}(T,\e,\eta) }N_{\e}(t)$
in terms of itself,
the noise-strength $\sigma$
and the size of the initial condition $V(0)$.
This leads to the following
bound for this expectation.
\begin{prop}
\label{prp:nls:general}
Assume that (HDt),
(HSt) and (HTw) are satisfied.
Pick a constant $0 < \e < \beta$,
together with two sufficiently small
constants $\delta_{\eta} > 0$
and $\delta_{\sigma} > 0$.
Then there exists a constant
$K > 0$ so that for any $T > 0$,
any $0 < \eta \le \delta_{\eta}$
and any $0 \le \sigma \le \delta_{\sigma}T^{-1/2}$
we have the bound
\begin{equation}
\label{eq:nls:prp:general:estimate}
\begin{array}{lcl}
E[\sup_{0\leq t\leq t_{\mathrm{st}}(T,\e,\eta) }N_{\e}(t)]
&\leq &
   K \Big[ \nrm{V(0)}^2_{H^1}+ \sigma^2 T \Big] .
\end{array}
\end{equation}
\end{prop}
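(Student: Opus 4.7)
The plan is to estimate the expectation of $\sup N_\epsilon$ component-by-component by working with the mild representation \eqref{eq:MildEqTimeTrans} for each $\overline{V}_i$, then undoing the time transformation $\tau_i \leftrightarrow t$ at the end. Since \eqref{eq:mild:ineqs:tau:phi} gives $t \le \tau_i(t) \le K_\kappa t$, the bookkeeping only costs universal constants, and up to stopping at $t_{\mathrm{st}}$ we may replace $N_\epsilon(t)$ by an equivalent quantity in transformed time. I would split $N_\epsilon$ into the instantaneous $L^2$ part and the weighted $H^1$ integral part, and bound each of the five mild integrals in both of these norms separately.

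For the first three terms of \eqref{eq:MildEqTimeTrans} (the homogeneous piece, the $\mathcal{W}_{\sigma;i}$ drift, and the stochastic integral), the estimates are essentially the same as in \cite{Hamster2017}: I would use $\|S(\tau)Q\|_{L^2\to L^2}\le M e^{-\beta\tau}$ together with the contribution from $P$ to handle the $L^2$ part, invoke the short-time bound $\|S(\tau)Q\|_{L^2\to H^1}\le M\tau^{-1/2}$ for the $H^1$ integrals, apply Proposition \ref{prp:fnl:bnds}(i)(ii) to the nonlinearities, and handle the stochastic term by the Itô isometry combined with a Burkholder–Davis–Gundy inequality for the supremum. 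The $t^{-1}$ divergence from squaring the $(t-s)^{-1/2}$ bound when computing the $H^1$ integral of the stochastic convolution is the classical obstacle; I would resolve it by the same integration-by-parts trick as in \cite{Hamster2017}, peeling off one derivative onto the test function and integrating over the temporal weight $e^{-\epsilon(t-s)}$ so that the divergence is integrated out.

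The genuinely new terms are the last three integrals involving $\Upsilon_{\sigma;i}$. Here I will use Proposition \ref{prp:fnl:bnds}(i), which gives $\|\Upsilon_{\sigma;i}(v)\|_{L^2}\le K\sigma^2\|v\|_{H^1}$, so each such integrand carries a prefactor $\sigma^2$. The integrals involving $\Lambda(\tau-\tau')$ and $S(\tau-\tau')P_\xi$ are the easy ones: Proposition \ref{prp:MainPropSplit} and Lemma \ref{lem:nls:sem:group:decay} furnish uniform $L^2\to H^2$ bounds on $(0,1]$ and exponential decay $e^{-\beta\tau}$ thereafter, so both the $L^2$ and the $H^1$ integral parts of $N_\epsilon$ contribute at most $\sigma^4$ times integrals of $\|\overline{V}_i\|_{H^1}^2$. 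The delicate term is $\int_0^\tau \partial_\xi S(\tau-\tau')Q\Upsilon_{\sigma;i}(\overline{V}_i(\tau'))\,d\tau'$: a naive short-time estimate diverges because $\partial_\xi S(\tau-\tau')Q$ is of order $(\tau-\tau')^{-1}$ as a map $L^2\to L^2$. I would exploit the block decomposition from \S\ref{sec:SemGrBlock}, writing $\partial_\xi S Q = \partial_\xi S_{\mathrm{d};I}Q+\partial_\xi S_{\mathrm{od};I}Q$; the diagonal heat-semigroup part acts componentwise on the $i$-th component, where the fact that $\tau_i$ was chosen to match $\kappa_{\sigma;i}$ forces $\phi_{\sigma;i}(v)_{i,i}=0$ so that the diagonal component of $\Upsilon_{\sigma;i}$ vanishes and only off-diagonal couplings remain, while $\partial_\xi S_{\mathrm{od};I}Q$ is controlled through Proposition \ref{prp:MainPropSplit} without the $(\tau-\tau')^{-1}$ singularity.

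Once all five integrals are bounded in both halves of $N_\epsilon$, squaring them gives inequalities of the schematic form
\begin{equation}
E\bigl[\sup_{0\le t\le t_{\mathrm{st}}} N_\epsilon(t)\bigr]
\le C\|V(0)\|_{H^1}^2 + C\sigma^2 T + C(\eta+\sigma^2)\,E\bigl[\sup_{0\le t\le t_{\mathrm{st}}} N_\epsilon(t)\bigr],
\end{equation}
where the smallness of the last prefactor is guaranteed by the pre-stopping bound $N_\epsilon\le\eta$ and by $\sigma\le\delta_\sigma T^{-1/2}$. Choosing $\delta_\eta$ and $\delta_\sigma$ small enough to absorb the last term on the left gives \eqref{eq:nls:prp:general:estimate}. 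The principal obstacle throughout is tracking the interplay between the $n$ separate time-transforms $\tau_i$ and a single common stochastic integral in the original time $t$; this forces one to do all bookkeeping on a \emph{per-component} basis and only combine at the end via the uniform equivalence $t\sim\tau_i$, and it is also what makes the cancellation of the $i$-th diagonal in $\Upsilon_{\sigma;i}$ crucial for avoiding the non-integrable singularity.
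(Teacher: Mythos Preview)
Your proposal is essentially correct and follows the same strategy as the paper: component-wise mild estimates, the integration-by-parts trick for the $H^1$ stochastic convolution, and exploiting $(\Upsilon_{\sigma;i})^i=0$ to trade the singular $\partial_\xi S$ for off-diagonal semigroup bounds. Two technical points deserve care. First, when you split $\partial_\xi S Q = \partial_\xi S_{\mathrm{d};I}Q + \partial_\xi S_{\mathrm{od};I}Q$ and invoke $(\Upsilon_{\sigma;i})^i=0$, note that $Q$ mixes components, so $[Q\Upsilon_{\sigma;i}]^i$ need not vanish; the paper instead writes $SQ = S - SP$ first and only then applies the block split to $S$, obtaining $[\partial_\xi S Q\,\Upsilon_{\sigma;i}]^i = [\partial_\xi S_{\mathrm{od}}\Upsilon_{\sigma;i}]^i - [\partial_\xi S P\,\Upsilon_{\sigma;i}]^i$, both of which are uniformly bounded for short times. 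Second, the absorbable prefactor in your closing inequality should be $\eta + \sigma^2 T + \sigma^4$ rather than $\eta + \sigma^2$: the supremum bounds on the stochastic convolutions (obtained via the $H^\infty$-calculus maximal inequality of Lemma~\ref{lem:nls:mild:regularity}, which replaces the Burkholder--Davis--Gundy inequality that does \emph{not} apply directly here) each pick up a factor $T+1$, and this is exactly why the hypothesis $\sigma \le \delta_\sigma T^{-1/2}$ is required.
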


Exploiting the technique used in
Stannat \cite{Stannat},
this bound can be turned into an estimate
concerning the probability
\begin{equation}
p_{\e}(T,\eta) = P\Big(
 \sup_{0 \leq t \leq T} \big[N_{\e}(t)\big]
 > \eta
\Big).
\end{equation}
This allows our
main stability result to be established
in a straightforward fashion.

\begin{proof}[Proof of Theorem \ref{thm:mr:orbital:stb}]
Upon computing
\begin{equation}
\begin{array}{lcl}
\eta p_{\e}(T,\eta)
& = & \eta P\big( t_{\mathrm{st}}(T,\e,\eta) < T\big)
\\[0.2cm]
& = & E \Big[
      \mathbf{1}_{t_{\mathrm{st}}(T,\e,\eta) < T}
      N_{\e}\big(t_{\mathrm{st}}(T,\e,\eta)\big)
    \Big]
\\[0.2cm]
& \le & E [N_{\e}\big(t_{\mathrm{st}}(T,\e,\eta)  \big)]\\[0.2cm]
& \leq & E [\sup_{0\leq t\leq t_{\mathrm{st}}(T,\e,\eta) }N_{\e}(t)],
\end{array}
\end{equation}
the result follows from Proposition \ref{eq:nls:prp:general:estimate}.

\end{proof}

\subsection{Setup}
\label{sec:nls:reg:ests}

In this subsection we
establish Proposition \ref{prp:nls:general}
by estimating each of the terms
featuring in \sref{eq:MildEqTimeTrans}.
In contrast to the situation in \cite{Hamster2017}
we cannot estimate $N_\e(t)$ directly
because the integral involving $\partial_{\xi} S(t-s)$
applied to $\Upsilon_{\sigma;i}\big(\overline{V}_i(s) \big)$
presents short-time regularity issues.
Instead, we will obtain separate estimates for each of the components
$N_\e^{i}(t)$, which are given by
\begin{equation}
\begin{array}{lcl}
N_\e^{i}(t)
  & = & \nrm{ V^i(t) }_{L^2}^2 +\int_0^t e^{-\e(t-s)}
       \nrm{V^i(s)}_{H^1}^2 \, ds .
\end{array}
\end{equation}
Indeed, the definitions \sref{eq:DefPhi} and \sref{eq:DefUpsilon}
imply that the $i$-th component of $\Upsilon_{\s;i}$ vanishes,
which allows us to replace the problematic $\partial_{\xi} S(t - s)$ term
with its off-diagonal components $\partial_{\xi} S_{\mathrm{od}}(t - s)$.
More precisely, for $\tau' \ge \tau - 1$ when computing short time bounds,
we will use
\begin{equation}
\begin{array}{lcl}
\Big[\p_{\xi} S(\tau - \tau') Q \Upsilon_{\s;i}\big(\oV_i(\tau') \big) \Big]^i
 & = &
 \Big[\p_{\xi} S(\tau - \tau') (I - P) \Upsilon_{\s;i}\big(\oV_i(\tau') \big) \Big]^i
\\[0.2cm]
& = &
  \Big[\p_{\xi} S_{\mathrm{od}}(\tau - \tau')  \Upsilon_{\s;i}\big(\oV_i(\tau') \big)
   - \p_{\xi} S(\tau-  \tau')P \Upsilon_{\s;i}\big(\oV_i(\tau') \big)
   \Big]^i .
\end{array}
\end{equation}
This will allow us to bound $N_\e^{i}(t)$ in terms of $N_\e(t)$.


In order to streamline our computations,
we now introduce some notation that will help us to stay as close
as possible to the framework developed in \cite{Hamster2017}.
First of all, we impose the splittings
\begin{equation}
\label{eq:reg:defns:n:i:ii}
\begin{array}{lcl}
N_{\e,I}(t)
  & = & \nrm{ V(t) }_{L^2}^2 ,
\\[0.2cm]
N_{\e,II}(t)
 & = &
   \int_0^t e^{-\e(t-s)}
       \nrm{V(s)}_{H^1}^2 \, ds,
\end{array}
\end{equation}
together with
\begin{equation}
\label{eq:reg:defns:n:i:ii:comp}
\begin{array}{lcl}
N^i_{\e;I}(t)
  & = & \nrm{ V^i(t) }_{L^2}^2 \\[0.2cm]
  & = & \nrm{ \oV_i^i\big(\tau_{i}(t)\big) }_{L^2}^2 ,
\\[0.2cm]
N^i_{\e;II}(t)
 & = &
   \int_0^t e^{-\e(t-s)}
       \nrm{V^i(s)}_{H^1}^2 \, ds
   \\[0.2cm]
 & = &   \int_0^t e^{-\e(t-s)}
       \nrm{\oV_i^i\big(\tau_{i}(s)\big)}_{H^1}^2 \, ds.
\end{array}
\end{equation}
In addition, we split $\mathcal{W}_{\s;i}$ into a linear and nonlinear part
as
\begin{align}
\mathcal{W}_{\s;i}(v)=\s^2F_{\mathrm{lin}}(v)+F_{\mathrm{nl}}(v)
\end{align}
and we isolate the constant term in $\overline{\mathcal{S}}_{\s;i}$
by writing
\begin{align}
\overline{\mathcal{S}}_{\s;i}(v)=B_{\mathrm{cn}}+B_{\mathrm{lin}}(v) .
\end{align}
Proposition \ref{prp:fnl:bnds} implies 
that these functions satisfy the bounds
\begin{align}\begin{split}
    \nrm{F_{\mathrm{lin}}(v)}_{L^2}&\leq K_{\mathrm{F;lin}}\nrm{v}_{H^1},\\
    \nrm{F_{\mathrm{nl}}(v)}_{L^2}&\leq K_{\mathrm{F;nl}}\nrm{v}^2_{H^1}(1+\nrm{v}^3_{L^2}),\\
    \nrm{B_{\mathrm{cn}}}_{L^2}&< \infty,\\
    \nrm{B_{\mathrm{lin}}(v)}_{L^2}&\leq K_{\mathrm{B;lin}}\nrm{v}_{H^1}\end{split}
\end{align}
for appropriate constants $K_{\mathrm{F;lin}} > 0$, $K_{\mathrm{F;nl}} > 0$
and $K_{\mathrm{B;lin}} > 0$.
In particular, they satisfy assumption (hFB) in \cite{Hamster2017}, which gives us the
opportunity to apply some of the ideas in \cite[{\S}9]{Hamster2017}.

For convenience we will write from now on $t_{\mathrm{st}}$ for $t_{\mathrm{st}}(T,\e,\eta)$.
In order to understand $N^i_{\e;I}$,
we introduce the expression
\begin{equation}
\begin{array}{lcl}
\mathcal{E}_{0}(t) &  = & S\big(\tau_{i}(t)\big) Q V(0),
\\[0.2cm]
\end{array}
\end{equation}
together with the long-term
integrals
\begin{equation}
\label{eq:reg:LTI}
\begin{array}{lcl}
\mathcal{E}^{\mathrm{lt}}_{F;\mathrm{lin}}(t) &  = &
  \int_0^{\tau_{i}(t)-1} S(\tau_{i}(t)- \tau) Q F_{\mathrm{lin}}\big(\overline{V}_i(\tau)\big)\mathbf{1}_{\tau < \tau_i(t_{\mathrm{st}})}d\tau ,
\\[0.2cm]
\mathcal{E}^{\mathrm{lt}}_{F;\mathrm{nl}}(t) &  = &
  \int_0^{\tau_{i}(t)-1} S(\tau_{i}(t)  - \tau) Q F_{\mathrm{nl}}\big(\overline{V}_i(\tau)\big)\mathbf{1}_{\tau < \tau_i(t_{\mathrm{st}})}d\tau ,
\\[0.2cm]
\mathcal{E}^{\mathrm{lt}}_{B;\mathrm{lin}}(t)
  & = &
    \int_0^{\tau_{i}(t)-1} S(\tau_{i}(t) -\tau) Q B_{\mathrm{lin}}\big(\overline{V}_i(\tau) \big) \mathbf{1}_{\tau < \tau_i(t_{\mathrm{st}})} d \beta_\tau,
\\[0.2cm]
\mathcal{E}^{\mathrm{lt}}_{B;\mathrm{cn}}(t)
  & = &
    \int_0^{\tau_{i}(t)-1} S(\tau_{i}(t) -\tau)Q B_{\mathrm{cn}} \mathbf{1}_{\tau < \tau_i(t_{\mathrm{st}})} d \beta_\tau ,
\\[0.2cm]
\mathcal{E}_{\mathrm{so}}^{\mathrm{lt}}(t)
 &= &\int_0^{\tau_{i}(t) - 1}\p_{\xi}S(\tau_{i}(t)-\tau) Q \Upsilon_{\s;i}\big(\oV_i(\tau)\big)
+\Lambda
  ({\tau_{i}(t)}-\tau)\Upsilon_{\s;i}\big(\oV_i(\tau)\big) \mathbf{1}_{\tau < \tau_i(t_{\mathrm{st}})}d\tau ,
\end{array}
\end{equation}
the short-term integrals
\begin{equation}
\begin{array}{lcl}
\mathcal{E}^{\mathrm{sh}}_{F;\mathrm{lin}}(t) &  = &
  \int_{\tau_{i}(t)-1}^{\tau_{i}(t)} S(\tau_{i}(t) - \tau) Q F_{\mathrm{lin}}\big(\overline{V}_i(\tau)\big)\mathbf{1}_{\tau < \tau_i(t_{\mathrm{st}})}d\tau ,
\\[0.2cm]
\mathcal{E}^{\mathrm{sh}}_{F;\mathrm{nl}}(t) &  = &
  \int_{\tau_{i}(t)-1}^{\tau_{i}(t)} S(\tau_{i}(t)  - \tau) Q F_{\mathrm{nl}}\big(\overline{V}_i(\tau)\big)\mathbf{1}_{\tau < \tau_i(t_{\mathrm{st}})}d\tau ,
\\[0.2cm]
\mathcal{E}^{\mathrm{sh}}_{B;\mathrm{lin}}(t)
  & = &
    \int_{\tau_{i}(t)-1}^{\tau_{i}(t)} S(\tau_{i}(t) -\tau) Q B_{\mathrm{lin}}\big(\overline{V}_i(\tau) \big) \mathbf{1}_{\tau < \tau_i(t_{\mathrm{st}})} d \beta_\tau,
\\[0.2cm]
\mathcal{E}^{\mathrm{sh}}_{B;\mathrm{cn}}(t)
  & = &
    \int_{\tau_{i}(t)-1}^{\tau_{i}(t)} S(\tau_{i}(t) -\tau)Q B_{\mathrm{cn}} \mathbf{1}_{\tau < \tau_i(t_{\mathrm{st}})} d \beta_\tau ,
\\[0.2cm]
\end{array}
\end{equation}
and finally the split second-order integrals
\begin{equation}
\begin{array}{lcl}
\mathcal{E}^{\mathrm{sh}}_{\mathrm{so};A}(t)
 & = &- \int_{\tau_i(t) - 1}^{\tau_{i}(t)}
   \p_{\xi} S(\tau_{i}(t)-\tau) P \Upsilon_{\s;i}\big(\oV_i(\tau)\big)
 \mathbf{1}_{\tau < \tau_i(t_{\mathrm{st}})}d\tau,
\\[0.2cm]
  \mathcal{E}^{\mathrm{sh}}_{\mathrm{so};B}(t)
 & = &\int_{\tau_i(t) - 1}^{\tau_{i}(t)}
\Lambda
  ({\tau_{i}(t)}-\tau)\Upsilon_{\s;i}\big(\oV_i(\tau)\big)
  \mathbf{1}_{\tau < \tau_i(t_{\mathrm{st}})}d\tau,
\\[0.2cm]
\mathcal{E}^{\mathrm{sh}}_{\mathrm{so};C}(t)
 & = &\int_{\tau_i(t) - 1}^{\tau_{i}(t)}\p_{\xi} S_{\mathrm{od}}(\tau_{i}(t)-\tau)
    \Upsilon_{\s;i}\big(\oV_i(\tau)\big) \, \mathbf{1}_{\tau < \tau_i(t_{\mathrm{st}})} d\tau.
\end{array}
\end{equation}
%
%
Here we use the convention that integrands are set to zero for $\tau < 0$.
Note that  integration variables in the original time are represented by $s$,
while integration variables in the rescaled time are denoted by $\tau$.
For $\eta > 0$ sufficiently small, our stopping time ensures that
the identities \sref{eq:MildNonProjZero} hold. This implies that
we may assume
\begin{equation}
P_\xi \Upsilon_{\s;i} \big(\oV_i(\tau)\big) + P \mathcal{W}_{\s;i}\big(\oV_i(\tau)\big)  = 0.
\end{equation}
This explains why there is a $Q$ in the first two lines of \sref{eq:reg:LTI}, as their $P$-counterparts are canceled against the
$S(\tau_i(t)-\tau)P_\xi$ term that is present in \sref{eq:MildEqTimeTrans} but
absent from \sref{eq:reg:LTI}.

For convenience, we also write
\begin{equation}
\mathcal{E}_{F;\#}(t) = \mathcal{E}^{\mathrm{lt}}_{F;\#}(t) + \mathcal{E}^{\mathrm{sh}}_{F;\#}(t)
\end{equation}
for $\# \in \{\mathrm{lin},\mathrm{nl}\}$,
together with
\begin{equation}
\mathcal{E}_{B;\#}(t) = \mathcal{E}^{\mathrm{lt}}_{B;\#}(t) + \mathcal{E}^{\mathrm{sh}}_{B;\#}(t)
\end{equation}
for $\# \in \{\mathrm{lin},\mathrm{cn}\}$
and finally
\begin{equation}
\mathcal{E}^{\mathrm{sh}}_{\mathrm{so}}(t) =
 \mathcal{E}_{\mathrm{so};A}^{\mathrm{sh}}(t)
   +  \mathcal{E}_{\mathrm{so};B}^{\mathrm{sh}}(t)
   +  \mathcal{E}_{\mathrm{so};C}^{\mathrm{sh}}(t)
\end{equation}
for the short-term second-order terms.

Turning to the terms
that are relevant
for evaluating $N^i_{\e;II}$,
we introduce the expression
\begin{equation}
\begin{array}{lcl}
\mathcal{I}_{\e,\delta;0}(t) &  = &
  \int_0^t e^{-\e ( t- s) }
    \norm{ S(\delta) \mathcal{E}_0(s) }_{H^1}^2 \, ds ,
\\[0.2cm]
\end{array}
\end{equation}
together with
\begin{equation}
\begin{array}{lcl}
\mathcal{I}^{\#}_{\e,\delta;F;\mathrm{lin}}(t) &  = &
  \int_0^t e^{-\e ( t- s) }
    \norm{ S(\delta) \mathcal{E}^{\#}_{F;\mathrm{lin}}(s) }_{H^1}^2 \, ds ,
\\[0.2cm]
\mathcal{I}^{\#}_{\e,\delta;F;\mathrm{nl} }(t) &  = &
  \int_0^t e^{-\e ( t- s) }
    \norm{ S(\delta) \mathcal{E}^{\#}_{F;\mathrm{nl}}(s) }_{H^1}^2 \, ds ,
\\[0.2cm]
\mathcal{I}^{\#}_{\e,\delta;B;\mathrm{lin}}(t)
  & = &
    \int_0^t e^{-\e ( t- s) }
    \norm{ S(\delta) \mathcal{E}^{\#}_{B;\mathrm{lin}}(s) }_{H^1}^2 \, ds  ,
\\[0.2cm]
 \mathcal{I}^{\#}_{\e,\delta;B;\mathrm{cn}}(t)
  & = &
    \int_0^t e^{-\e ( t- s) }
    \norm{ S(\delta) \mathcal{E}^{\#}_{B;\mathrm{cn}}(s) }_{H^1}^2 \, ds ,
\\[0.2cm]
 \mathcal{I}^{\#}_{\e,\delta;\mathrm{so}}(t)
  & = &
    \int_0^t e^{-\e ( t- s) }
    \norm{ S(\delta) \mathcal{E}^{\#}_{\mathrm{so}}(s) }_{H^1}^2 \, ds
\\[0.2cm]
\end{array}
\end{equation}
for $\# \in \{\mathrm{lt}, \mathrm{sh}\}$. The extra $S(\delta)$ factor will be used
to ensure that all the integrals we encounter are well-defined.
We emphasize that all our estimates are uniform in $0 < \delta < 1$,
allowing us to take $\delta \downarrow 0$.
The estimates concerning $\mathcal{I}^{\mathrm{sh}}_{\e,\delta;F;\mathrm{nl}}$
and $\mathcal{I}^{\mathrm{sh}}_{\e,\delta;B;\mathrm{lin}}$
in Lemmas \ref{lem:nls:f:nl:st} and \ref{lem:nls:b:lin:st}
are particularly delicate in this respect, as a direct
application of the bounds in Lemma \ref{lem:nls:sem:group:decay}
would result in expressions that diverge as $\delta \downarrow 0$.

The main difference between the approach here and
the computations in \cite[{\S}9]{Hamster2017} is that we need to keep track
of several time transforms simultaneously, which forces us
to use the original time $t$ in the definitions
\sref{eq:reg:defns:n:i:ii}-\sref{eq:reg:defns:n:i:ii:comp}.
The following result plays a key role in this respect, as it shows that
decay rates in the $\tau$-variable are stronger than decay rates in the original time.

\begin{lem}\label{lem:TauEst}
Assume that (HDt), (HSt) and (HTw) are satisfied
and pick $0 \le \sigma \le \delta_{\sigma}$.
Then for any pair $t>s \ge 0$ we have the inequality
\begin{align}
\label{eq:nls:TauEst:tau:est:i}
\tau_i(t)-\tau_i(s)\geq t-s,
\end{align}
while for any $s \ge t_i(1)$
we have
\begin{align}
t_i(\tau_i(s)-1)\geq s-1 .
\end{align}
\end{lem}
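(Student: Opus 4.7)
The plan is to note that both parts of the lemma reduce to exploiting the pointwise lower bound $\kappa_{\sigma;i}(\Phi_\sigma + V(r), \psi_{\mathrm{tw}}) \geq 1$ coming from \sref{eq:prlm:kappa:glb:ests}, together with the strict monotonicity of $\tau_i$ and its inverse $t_i$ that follows immediately from the upper bound $\kappa_{\sigma;i} \leq K_\kappa$ in the same estimate.

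For the first inequality, I would simply write
\begin{equation*}
\tau_i(t) - \tau_i(s) = \int_s^t \kappa_{\sigma;i}\big(\Phi_\sigma + V(r), \psi_{\mathrm{tw}}\big) \, dr \geq \int_s^t 1 \, dr = t - s,
\end{equation*}
which settles \sref{eq:nls:TauEst:tau:est:i}.

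For the second inequality, I first observe that the hypothesis $s \geq t_i(1)$ guarantees, via the monotonicity of $\tau_i$ and the identity $\tau_i(t_i(1)) = 1$, that $\tau_i(s) \geq 1$; consequently $\tau_i(s) - 1$ lies in the domain $[0, T]$ of $t_i$. If $s < 1$ the claim is immediate because $t_i$ takes non-negative values while $s - 1 < 0$. If instead $s \geq 1$, I apply the first part of the lemma with the pair $(s, s-1)$, obtaining
\begin{equation*}
\tau_i(s) - \tau_i(s-1) \geq 1,
\qquad \text{i.e.,} \qquad \tau_i(s-1) \leq \tau_i(s) - 1.
\end{equation*}
Applying the increasing map $t_i$ to both sides and using $t_i \circ \tau_i = \mathrm{id}$ then yields
\begin{equation*}
s - 1 = t_i\big(\tau_i(s-1)\big) \leq t_i\big(\tau_i(s) - 1\big),
\end{equation*}
as desired.

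There is really no substantial obstacle here; the lemma is a bookkeeping statement whose content is entirely captured by the one-sided bound $\kappa_{\sigma;i} \geq 1$ on the rate of the time change and the monotonicity of $t_i$. The only mild subtlety is the case split in the second part to accommodate the possibility that $t_i(1) < 1$, which is handled trivially by noting that $t_i$ maps into $[0,T]$.
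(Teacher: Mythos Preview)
Your proof is correct. For the first inequality your argument is identical to the paper's: both integrate the pointwise lower bound $\kappa_{\sigma;i}\ge 1$ from \sref{eq:prlm:kappa:glb:ests} over $[s,t]$. For the second inequality you take a slightly different route: you apply the first inequality to the pair $(s,s-1)$ and then push the resulting bound $\tau_i(s-1)\le \tau_i(s)-1$ through the increasing inverse $t_i$, with a harmless case split for $s<1$. The paper instead applies the first inequality to the pair $(s,\tilde s)$ with $\tilde s=t_i(1)\le 1$; as literally written it only displays $\tau_i(s)-1\ge s-1$, which is not yet the claim and still needs the observation that $t_i$ is $1$-Lipschitz (equivalently, the first inequality applied to $(s,t_i(\tau_i(s)-1))$). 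Your version makes that final step explicit, so if anything it is the cleaner of the two.
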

\begin{proof}
The first inequality can be verified
by using \sref{eq:prlm:kappa:glb:ests}
to compute
\begin{align}
\begin{split}
\tau_i(t)-\tau_i(s)&=\int_s^t\k_{\s;i}(\Phi_\s+V(s'),\psi_{\mathrm{tw}})ds'\\
&\geq (t-s)\min_{s\leq s'\leq t}\k_{\s;i}(\Phi_\s+V(s'),\psi_{\mathrm{tw}})\\
&\geq t-s .
\end{split}
\end{align}
To obtain the second inequality, we write $\tilde{s} = t_i(1) \le 1$
and compute
\begin{equation}
\tau_i(s) -1 = \tau_i(s) - \tau_i(\tilde{s})
\ge s - \tilde{s} \ge s - 1.
\end{equation}
\end{proof}

We now set out to bound all the terms appearing in $N_\e^{i}(t)$.
Following \cite{Hamster2017}, we first study the deterministic integrals and afterwards use $H^\infty$-calculus to bound the stochastic integrals.

\subsection{Deterministic Regularity Estimates}
First, we collect some results from \cite[\S 9.2]{Hamster2017} that are easily adapted to the present situation.

\begin{lem}\label{lem:ColPrevRes}
Fix $T > 0$, assume that (HDt), (HSt) and (HTw) all hold
and pick a constant $0 < \e < \beta$.
Then for any $\eta>0$,
any $0 \leq \delta < 1$
and any $0 \le t \le t_{\mathrm{st}}$,
we have the bounds
\begin{equation}
\begin{array}{lcl}
\norm{\mathcal{E}_0(t)}_{L^2}^2
 & \le &  M^2 e^{-2\b t} \norm{V(0)}^2_{L^2},
 \\[0.2cm]
  \norm{\mathcal{E}_{F;\mathrm{lin}}(t)}_{L^2}^2
 & \le &  K^2_\k K_{F;\mathrm{lin}}^2 \frac{M^2}{2 \beta - \e} N_{\e;II}(t),
 \\[0.2cm]
 \norm{\mathcal{E}_{F;\mathrm{nl}}(t)}_{L^2}^2
 & \le &  \eta K_\k^2K_{F;\mathrm{nl}}^2 M^2( 1 + \eta^3)^2
  N_{\e;II}(t),
\end{array}
\end{equation}
together with
\begin{equation}
\begin{array}{lcl}
\mathcal{I}_{\e,\delta;0}(t)
 & \le &  \frac{M^2}{2 \beta - \e} e^{-\e t} \norm{V(0)}^2_{H^1},
 \\[0.2cm]
  \mathcal{I}^{\mathrm{lt}}_{\e,\delta;F;\mathrm{lin}}(t)
 & \le &
  K^2_\k K_{F;\mathrm{lin}}^2 \frac{M^2}{2(\beta - \e)
    \e} N_{\e;II}(t),
    \\[0.2cm]
    \mathcal{I}^{\mathrm{sh}}_{\e,\delta;F;\mathrm{lin}}(t)
 & \le & 4 e^{\e} M^2 K_\k K_{F;\mathrm{lin}}^2
 N_{\e;II}(t) ,
 \\[0.2cm]
 \mathcal{I}^{\mathrm{lt}}_{\e,\delta;F;\mathrm{nl}}(t)
 & \le &    \eta K_\k^2K_{F;\mathrm{nl}}^2 ( 1 + \eta^3)^2
   \frac{M^2}{ \beta  - \e }
      N_{\e;II}(t) .
\end{array}
\end{equation}
\end{lem}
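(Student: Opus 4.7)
Each of the seven estimates in Lemma~\ref{lem:ColPrevRes} is the direct analogue of a bound established in \cite[\S 9.2]{Hamster2017}, so the plan is to reproduce those calculations while carefully tracking the passage between the rescaled time $\tau=\tau_i(s)$ that naturally parametrizes the integrals in \sref{eq:reg:LTI} and the original time $s$ appearing in the weights of $N_{\e;II}(t)$. The systematic dictionary is provided by Lemma~\ref{lem:TauEst} together with \sref{eq:prlm:kappa:glb:ests}: the inequality $\tau_i(t)-\tau_i(s)\ge t-s$ means that every exponential factor $e^{-\mu(\tau_i(t)-\tau)}$ appearing in a $\tau$-integral is bounded above by $e^{-\mu(t-s)}$ once the integration is pushed back to the $s$-variable, while $d\tau_i(s)=\kappa_{\s;i}\,ds\le K_\kappa\,ds$ produces a single factor of $K_\kappa$ per change of variables.

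\textbf{The straightforward bounds.} The estimate on $\nrm{\mathcal{E}_0(t)}_{L^2}$ is immediate from $\nrm{S(\tau)Q}_{L^2\to L^2}\le Me^{-\b\tau}$ in Lemma~\ref{lem:nls:sem:group:decay} evaluated at $\tau=\tau_i(t)\ge t$. For $\nrm{\mathcal{E}_{F;\#}(t)}_{L^2}$ with $\#\in\{\mathrm{lin},\mathrm{nl}\}$, the plan is to insert the semigroup bound together with the hypothesised estimate on $\nrm{F_\#(v)}_{L^2}$ into the defining integral, then apply Cauchy--Schwarz with the symmetric weight split $e^{-(2\b-\e)(\tau_i(t)-\tau)/2}\cdot e^{-\e(\tau_i(t)-\tau)/2}$: the first factor integrates to $(2\b-\e)^{-1}$, and after pulling the second back to the $s$-variable the remaining integral becomes $K_\kappa N_{\e;II}(t)$. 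For $\mathcal{E}_{F;\mathrm{nl}}$ one extra copy of $\nrm{\overline{V}_i}_{H^1}^2$ is absorbed pointwise via the stopping condition $\nrm{V}_{L^2}^2\le\eta$. The integrals $\mathcal{I}_{\e,\delta;0}(t)$ and $\mathcal{I}^{\mathrm{lt}}_{\e,\delta;F;\#}(t)$ follow the same template: write $S(\delta)\mathcal{E}(s)$ as a single semigroup of argument $\delta+\tau_i(s)-\tau$, observe that on the long-time range $\tau_i(s)-\tau\ge 1$ the exponential $L^2\to H^1$ bound in Lemma~\ref{lem:nls:sem:group:decay} is available uniformly in $\delta\in[0,1)$, perform the same Cauchy--Schwarz split, and finally swap the $(s,\tau)$ integrations by Fubini.

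\textbf{The short-time bound and main obstacle.} The genuinely delicate estimate is the one for $\mathcal{I}^{\mathrm{sh}}_{\e,\delta;F;\mathrm{lin}}(t)$, because on the short window $\tau_i(s)-\tau\in[0,1]$ Lemma~\ref{lem:nls:sem:group:decay} only supplies the square-root singular bound $\nrm{S(\delta+\tau_i(s)-\tau)Q}_{L^2\to H^1}\le M(\delta+\tau_i(s)-\tau)^{-1/2}$, whose square $(\delta+r)^{-1}$ fails to be integrable uniformly in $\delta$. The remedy is to apply Cauchy--Schwarz symmetrically in $r=\tau_i(s)-\tau$, splitting $(\delta+r)^{-1/2}=(\delta+r)^{-1/4}\cdot(\delta+r)^{-1/4}$ so that only $\int_0^1(\delta+r)^{-1/2}\,dr\le 2$ appears as a free factor. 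Fubini then reduces matters to controlling, for fixed $\tau$, the $s$-integral of $(\delta+\tau_i(s)-\tau)^{-1/2}\,e^{-\e(t-s)}$ over an interval of length at most one (guaranteed again by Lemma~\ref{lem:TauEst}), which integrates to at most $2$ after substituting $r=\tau_i(s)-\tau$. Comparing $e^{-\e(t-s)}$ to $e^{-\e(t-t_i(\tau))}$ across this unit window costs a factor $e^{\e}$, and the final change of variables to the original time contributes one more $K_\kappa$; multiplying the constants yields exactly $4e^\e M^2 K_\kappa K_{F;\mathrm{lin}}^2 N_{\e;II}(t)$. This symmetric Cauchy--Schwarz is the only nontrivial ingredient: a naive split into $(\delta+r)^{-1}\cdot 1$ would produce the forbidden $\log\delta^{-1}$ divergence, whereas everything else in the lemma is a bookkeeping variant of the long-time template.
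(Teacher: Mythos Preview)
Your proposal is correct and follows essentially the same route as the paper, which writes out only the $\mathcal{E}_{F;\mathrm{lin}}$ computation explicitly (identical to your Cauchy--Schwarz split after the time-change) and then refers to \cite[Lemmas~9.9--9.11]{Hamster2017} for the remaining six bounds. Your symmetric $(\delta+r)^{-1/4}\cdot(\delta+r)^{-1/4}$ split for $\mathcal{I}^{\mathrm{sh}}_{\e,\delta;F;\mathrm{lin}}$ is the standard convolution-inequality argument and reproduces the stated constant $4e^{\e}M^2K_\kappa K_{F;\mathrm{lin}}^2$ exactly.

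One small correction is needed in your treatment of $\mathcal{E}_{F;\mathrm{nl}}$: the extra factor $\nrm{\overline{V}_i}_{H^1}^2$ cannot be ``absorbed pointwise via the stopping condition $\nrm{V}_{L^2}^2\le\eta$'', because the stopping time does not control $\nrm{V}_{H^1}$ pointwise at all --- only $\nrm{V}_{L^2}^2$ and the weighted integral $N_{\e;II}$ are bounded by $\eta$. The actual mechanism is that after the time-change and the bound $e^{-\beta(t-s)}\le e^{-\e(t-s)}$ you arrive at $\big(\int_0^t e^{-\e(t-s)}\nrm{V(s)}_{H^1}^2\,ds\big)^2=N_{\e;II}(t)^2$, and then the \emph{integral} part of the stopping condition gives $N_{\e;II}(t)^2\le\eta\,N_{\e;II}(t)$, which is where the leading $\eta$ in the bound comes from. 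The same remark applies to your sketch for $\mathcal{I}^{\mathrm{lt}}_{\e,\delta;F;\mathrm{nl}}$.
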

\begin{proof}
Observe first that
\begin{equation}
\begin{array}{lcl}
\nrm{\mathcal{E}_{F;\mathrm{lin}}(t)}_{L^2}^2
& \le &
   K_{F;\mathrm{lin}}^2 M^2  \left(
     \int_0^{\tau_{i}(t)} e^{-\beta(\tau_{i}(t) - \tau) } \norm{\overline{V}_i(\tau)}_{H^1} \, d\tau \right)^2 .
\end{array}
\end{equation}
Substituting $s = t_i(\tau)$ we find
\begin{equation}
\begin{array}{lcl}
\nrm{\mathcal{E}_{F;\mathrm{lin}}(t)}_{L^2}^2
  & \le &
  K_{F;\mathrm{lin}}^2 M^2  \left(
     \int_0^{t} e^{- (\beta - \frac{\e}{2} )(\tau_{i}(t) - \tau_{i}(s)) } e^{-\frac{\e}{2} ( \tau_{i}(t) - \tau_{i}(s)) }
     \norm{V(s)}_{H^1}\tau'_i(s) \, ds \right)^2 .
\\[0.2cm]
\end{array}
\end{equation}
Applying \sref{eq:nls:TauEst:tau:est:i}
and using
\sref{eq:prlm:kappa:glb:ests} to bound the extra integration factor
$\tau_i'(s)$ by $K_{\kappa}$,
we obtain
\begin{equation}
\begin{array}{lcl}
\nrm{\mathcal{E}_{F;\mathrm{lin}}(t)}_{L^2}^2
  & \le &
  K_\k^2K_{F;\mathrm{lin}}^2 M^2 \left(
     \int_0^{t} e^{- (\beta - \frac{\e}{2} )(t-s) } e^{-\frac{\e}{2} ( t-s) } \norm{V(s)}_{H^1} \, ds \right)^2 .
\\[0.2cm]
\end{array}
\end{equation}
Cauchy-Schwartz now yields the desired bound
\begin{equation}
\begin{array}{lcl}
\nrm{\mathcal{E}_{F;\mathrm{lin}}(t)}_{L^2}^2
& \le &
  K_\k^2K_{F;\mathrm{lin}}^2 \frac{M^2}{2 \beta - \e}
     \int_0^{t} e^{- \e ( t - s) } \norm{V(s)}_{H^1}^2 \, ds
\\[0.2cm]
& = &
  K_\k^2K_{F;\mathrm{lin}}^2 \frac{M^2}{2 \beta - \e} N_{\e;II}(t) .
\end{array}
\end{equation}
The remaining estimates follow in an analogous fashion
by making similar small adjustments to the proofs of Lemmas 9.9-9.11 in \cite{Hamster2017}.
\end{proof}

Our next result discusses the novel second-order terms.
The crucial ingredient here is that we no longer
have to consider the
dangerous $\p_{\xi} S(t_i(\tau) - \tau) Q \Upsilon_{\sigma;i}\big(\overline{V}(\tau) \big)$
term for $\tau \ge t_i(\tau) - 1$. Indeed,
this term need not be integrable even in $L^2$
because of the divergent $(\tau_i(t) - \tau)^{-1/2}$
behavior of $\partial_{\xi} S$ and the fact that we only have square-integrable
control of the $H^1$-norm of $\overline{V}_i(\tau)$.

\begin{lem}\label{lem:nls:so:st}
Fix $T > 0$ and assume that (HDt), (HSt) and (HTw) all hold.
Pick a constant $0< \e<2\b$.
Then for any
$0 \leq \delta < 1$
and any $0 \le t \le t_{\mathrm{st}}$,
we have the bounds
\begin{equation}
\begin{array}{lcl}
\norm{ \mathcal{E}^{\mathrm{sh}}_{\mathrm{so}}(t)}_{L^2}^2
  &\leq &
 9 \s^4 e^{2 \beta} K^2 K_\k M^2 N_{\e;II}(t),
\\[0.2cm]
\norm{ \mathcal{E}^{\mathrm{lt}}_{\mathrm{so}}(t)}_{L^2}^2
  &\leq &
 4 \s^4  K^2 K_\k \frac{M^2}{2\beta - \e} N_{\e;II}(t),
\end{array}
\end{equation}
together with
\begin{equation}
\begin{array}{lcl}
\mathcal{I}^{\mathrm{sh}}_{\e,\delta;\mathrm{so}}(t)
&\leq &
  9 \s^4 e^{2 \beta} K^2K_\k M^2 N_{\e;II}(t)
\\[0.2cm]
\mathcal{I}^{\mathrm{lt}}_{\e,\delta;\mathrm{so}}(t)
&\leq &
  4 \s^4K^2K_\k \frac{M^2}{2(\beta - \e) \e} N_{\e;II}(t) .
\end{array}
\end{equation}
\end{lem}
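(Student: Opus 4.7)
The plan is to follow the same template as Lemma \ref{lem:ColPrevRes}: take pointwise operator-norm bounds from Proposition \ref{prp:MainPropSplit} and Lemma \ref{lem:nls:sem:group:decay}, use Proposition \ref{prp:fnl:bnds}(i) to estimate $\nrm{\Upsilon_{\s;i}(\oV_i(\tau))}_{L^2} \le K \s^2 \nrm{\oV_i(\tau)}_{H^1}$, substitute $s = t_i(\tau)$ to pass back to the original time, use \sref{eq:prlm:kappa:glb:ests} to control the Jacobian by $K_\k$, invoke Lemma \ref{lem:TauEst} to convert exponential decay in $\tau_i(t) - \tau$ into exponential decay in $t-s$, and close with Cauchy--Schwarz against the $e^{-\e(t-s)}$ weight to produce $N_{\e;II}(t)$. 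The crucial structural point that makes this work is precisely the splitting $\mathcal{E}^{\mathrm{sh}}_{\mathrm{so}} = \mathcal{E}^{\mathrm{sh}}_{\mathrm{so};A} + \mathcal{E}^{\mathrm{sh}}_{\mathrm{so};B} + \mathcal{E}^{\mathrm{sh}}_{\mathrm{so};C}$, which was engineered so that every operator that appears maps $L^2 \to H^2$ with a bound uniform in the argument.

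For the short-time piece I would bound each summand separately. The operator $\Lambda(\tau_i(t)-\tau)$ is controlled by \sref{eq:MainPropSpliti}, while $\partial_\xi S_{\mathrm{od}}(\tau_i(t)-\tau)$ inherits an $L^2 \to H^1$ bound from \sref{eq:MainPropSpliti} composed with $\partial_\xi : H^2 \to H^1$. For $\partial_\xi S(\tau_i(t)-\tau) P$ I would use that $\mathcal{L}_{\mathrm{tw}} \Phi_0' = 0$ implies $S(\tau)P v = \langle v , \psi_{\mathrm{tw}}\rangle_{L^2} \Phi_0'$, so that $\partial_\xi S(\tau) P v = \langle v,\psi_{\mathrm{tw}}\rangle_{L^2} \Phi_0''$ is bounded uniformly by $\nrm{\psi_{\mathrm{tw}}}_{L^2} \nrm{\Phi_0''}_{H^1}$ (using $\Phi_0 \in H^3$). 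After summing and applying the triangle inequality, the factor $9$ arises from $(3M)^2$ (three pieces, each bounded by $M$ after adjusting constants), and the factor $e^{2\b}$ from absorbing the harmless $e^{\b(\tau_i(t)-\tau)}$ that naturally appears when we artificially insert the weight $e^{-\b(t-s)}$ on the length-one window to enable Cauchy--Schwarz against $N_{\e;II}(t)$.

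For the long-time piece both $\partial_\xi S(\tau_i(t)-\tau) Q$ and $\Lambda(\tau_i(t)-\tau)$ decay like $Me^{-\b(\tau_i(t)-\tau)}$ as operators $L^2 \to H^1$: the latter by \sref{eq:MainPropSplitii}, the former by the fourth entry of Lemma \ref{lem:nls:sem:group:decay} composed again with $\partial_\xi : H^2 \to H^1$. From here the computation is precisely the one carried out for $\mathcal{E}_{F;\mathrm{lin}}$ in Lemma \ref{lem:ColPrevRes}: split $\b = (\b - \tfrac{\e}{2}) + \tfrac{\e}{2}$, pull the $e^{-(\b-\e/2)(t-s)}$ factor through Cauchy--Schwarz, and absorb two copies of $K_\k$ (one from the Jacobian, one from \sref{eq:nls:TauEst:tau:est:i}). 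The integrated $H^1$-bounds on $\mathcal{I}^\#_{\e,\delta;\mathrm{so}}$ require no new work: the pointwise estimates above already live in $H^1$ (indeed $H^2$), and analyticity of $\{S(\delta)\}$ yields $\nrm{S(\delta)}_{\mathcal{L}(H^1;H^1)} \le C$ uniformly for $\delta \in [0,1]$, so the same constants transfer after one more application of the $e^{-\e(t-s)}$ Cauchy--Schwarz.

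The step I expect to require the most care is the bookkeeping around the interplay of $\tau_i(t)$ and $t$. In particular, for the short-time window to contain a meaningful contribution the identity must be applied only once $\tau_i(t) \ge 1$, and one needs the second half of Lemma \ref{lem:TauEst} to guarantee that the preimage $t_i(\tau_i(t) - 1)$ lies on the correct side of $t - 1$, so that after substitution the short-time window genuinely captures a time interval of length at most $K_\k$ in the original variable. Any worry about non-adaptedness of $\mathcal{E}_{\mathrm{so}}^\#$ or about measurability of the upper limit $\tau_i(t)$ does not arise here because the integrals are purely deterministic given $\omega$.
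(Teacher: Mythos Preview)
Your proposal is correct and follows essentially the same route as the paper. The paper's proof obtains the identical pointwise $H^1$ bounds on each of the three short-time pieces (via Lemma \ref{lem:nls:sem:group:decay} and Proposition \ref{prp:MainPropSplit}), inserts the artificial factor $e^{\beta}e^{-\beta(\tau_i(t)-\tau)}$ on the length-one window exactly as you describe, and then explicitly says the remaining estimates ``can hence be obtained in the same fashion as the bounds for $\mathcal{E}_{F;\mathrm{lin}}(t)$ and $\mathcal{I}_{\e,\delta;F;\mathrm{lin}}^{\mathrm{lt}}(t)$ in Lemma \ref{lem:ColPrevRes}''. One small remark: your sentence ``the same constants transfer after one more application of the $e^{-\e(t-s)}$ Cauchy--Schwarz'' is slightly too glib for $\mathcal{I}^{\mathrm{lt}}_{\e,\delta;\mathrm{so}}$, since integrating the pointwise bound $\nrm{\mathcal{E}^{\mathrm{lt}}_{\mathrm{so}}(s)}_{H^1}^2 \le C\,N_{\e;II}(s)$ against $e^{-\e(t-s)}$ does not directly return $N_{\e;II}(t)$; one has to redo the double-integral computation (Fubini, then integrate the $e^{-(2\beta-\e)s}$ factor) exactly as for $\mathcal{I}^{\mathrm{lt}}_{\e,\delta;F;\mathrm{lin}}$, which is why the constant changes from $\frac{1}{2\beta-\e}$ to $\frac{1}{2(\beta-\e)\e}$. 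This is covered by your opening instruction to ``follow the same template as Lemma \ref{lem:ColPrevRes}'', so it is not a gap.
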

\begin{proof}
For $\tau \ge \tau_i(t) - 1$
we may use Lemma \ref{lem:nls:sem:group:decay}
together with Proposition
\ref{prp:fnl:bnds}
to obtain the estimate
\begin{equation}
\begin{array}{lcl}
\norm{\partial_{\xi} S(\tau_i(t) - \tau) P
  \Upsilon_{\sigma;i}\big( \overline{V}_i(\tau) \big)
}_{H^1} & \le &
   \sigma^2 K M \norm{\overline{V}_i(\tau)}_{H^1}
\\[0.2cm]
& \le &
   e^{\beta} \sigma^2 K M
     e^{-\beta(\tau_i(t) - \tau) } \norm{\overline{V}_i(\tau)}_{H^1}.
\end{array}
\end{equation}
In the same fashion we obtain
\begin{equation}
\begin{array}{lcl}
\norm{\Lambda(\tau_i(t) - \tau)
  \Upsilon_{\sigma;i}\big( \overline{V}_i(\tau) \big)
}_{H^1} & \le & e^{\beta} \sigma^2 K M
    e^{-\beta(\tau_i(t) - \tau) } \norm{\overline{V}_i(\tau)}_{H^1} ,
\\[0.2cm]
\norm{\partial_{\xi} S_{\mathrm{od}}(\tau_i(t) - \tau)
  \Upsilon_{\sigma;i}\big( \overline{V}_i(\tau) \big)
}_{H^1}  & \le &
e^{\beta} \sigma^2 K M
    e^{-\beta(\tau_i(t) - \tau) } \norm{\overline{V}_i(\tau)}_{H^1} .
\end{array}
\end{equation}
In addition,
for $\tau \le \tau_i(t) - 1$
we obtain
\begin{equation}
\nrm{
\big[ \p_{\xi}S(\tau_{i}(t)-\tau)Q
  +  \Lambda
    ({\tau_{i}(t)}-\tau) \big]
  \Upsilon_{\s;i}\big(\oV_{i}(\tau)\big)
  }_{H^1}
  \le 2 K M \sigma^2 \norm{\overline{V}_i(\tau)}_{H^1} e^{-\beta(\tau_i(t) - \tau) } .
\end{equation}
The desired estimates can hence be obtained
in the same fashion as the bounds
for $\mathcal{E}_{F;\mathrm{lin}}(t)$
and $\mathcal{I}_{\e,\delta;F;\mathrm{lin}}^{\mathrm{lt}}(t)$
in Lemma \ref{lem:ColPrevRes} .
\end{proof}

The following results at times
do require the computations in \cite{Hamster2017} to be modified
in a subtle non-trivial fashion. We therefore provide full proofs here,
noting, however, that the main ideas remain unchanged.
\begin{lem}
\label{lem:nls:f:nl:st}
Fix $T > 0$ and assume that (HDt), (HSt) and (HTw) all hold.
Pick a constant $\e > 0$.
Then for any
$\eta > 0$,
any $0 \leq \delta < 1$
and any $0 \le t \le t_{\mathrm{st}}$,
we have the bound
\begin{equation}
\begin{array}{lcl}
\mathcal{I}^{\mathrm{sh}}_{\e,\delta;F,\mathrm{nl}}(t) & \le &
 \eta M^2e^{3\e} K_\k^2K_{F;\mathrm{nl}}^2 (1 + \eta^3)^2 (1 + \rho_{\mathrm{min}}^{-1})(3K_\k+2)
   N_{\e;II}(t) .
\end{array}
\end{equation}
\end{lem}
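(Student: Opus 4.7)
The plan is to follow the semigroup argument from \cite[Lem. 9.10]{Hamster2017}, adapted to account for the $i$-dependent time-change $\tau_i$. First I would apply the short-time bound $\norm{S(\theta)Q}_{\L(L^2;H^1)} \leq M\theta^{-1/2}$ from Lemma \ref{lem:nls:sem:group:decay} to each integrand, together with the $F_{\mathrm{nl}}$ estimate (in which the stopping time gives $\norm{\oV_i(\tau)}_{L^2}^2 \leq \eta$), obtaining
\begin{equation}
\norm{S(\delta)\mathcal{E}^{\mathrm{sh}}_{F;\mathrm{nl}}(s)}_{H^1} \leq M K_{F;\mathrm{nl}}(1+\eta^3)\int_{(\tau_i(s)-1)_+}^{\tau_i(s)}(\delta+\tau_i(s)-\tau)^{-1/2}\norm{\oV_i(\tau)}_{H^1}^2\,d\tau.
\end{equation}
A Cauchy--Schwarz step against the measure $(\delta+\tau_i(s)-\tau)^{-1/2}\,d\tau$ (whose total mass on the interval of integration is bounded by $2$, independently of $\delta$) would then yield the squared estimate
\begin{equation}
\norm{S(\delta)\mathcal{E}^{\mathrm{sh}}_{F;\mathrm{nl}}(s)}_{H^1}^2 \leq 2M^2 K_{F;\mathrm{nl}}^2(1+\eta^3)^2\int_{(\tau_i(s)-1)_+}^{\tau_i(s)}(\delta+\tau_i(s)-\tau)^{-1/2}\norm{\oV_i(\tau)}_{H^1}^4\,d\tau.
\end{equation}

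Next I would multiply by $e^{-\e(t-s)}$, integrate over $s\in[0,t]$, and switch to the original time variable via the substitution $\sigma=t_i(\tau)$ in the inner integral. This costs a factor $\tau_i'(\sigma)\leq K_\k$ from \sref{eq:prlm:kappa:glb:ests} and invokes Lemma \ref{lem:TauEst} both to bound $(\delta+\tau_i(s)-\tau_i(\sigma))^{-1/2}$ by $(s-\sigma)^{-1/2}$ and to replace the lower limit $t_i(\tau_i(s)-1)$ by $s-1$. Applying Fubini to exchange the $(s,\sigma)$-integrations and evaluating the resulting inner $s$-integral via
\begin{equation}
\int_\sigma^{\min(t,\sigma+1)}e^{-\e(t-s)}(s-\sigma)^{-1/2}\,ds\leq 2e^\e e^{-\e(t-\sigma)}
\end{equation}
should produce an intermediate bound of the shape
\begin{equation}
\mathcal{I}^{\mathrm{sh}}_{\e,\delta;F,\mathrm{nl}}(t) \leq C_1 \int_0^t e^{-\e(t-\sigma)}\norm{V(\sigma)}_{H^1}^4\,d\sigma,
\end{equation}
where $C_1$ collects $M^2 K_{F;\mathrm{nl}}^2 K_\k (1+\eta^3)^2$ together with explicit powers of $e^\e$.

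The final and most delicate step is to convert this quartic integral into the quadratic form $\eta N_{\e;II}(t)$. I would exploit the pointwise stopping-time control $\norm{V(\sigma)}_{L^2}^2\leq\eta$ together with the interpolation/integration-by-parts machinery from \cite[{\S}9]{Hamster2017}, in which one factor of $\norm{V(\sigma)}_{H^1}^2$ is absorbed by the $L^2$-bound so as to produce an $\eta$, while the remaining copy of $\norm{V(\sigma)}_{H^1}^2$ stays inside $N_{\e;II}$. Careful book-keeping of the $\k_i$-factors introduced by the two time changes (contributing $K_\k^2$ and $3K_\k+2$) and of the diffusion weights appearing in the $H^1$-norm (contributing $1+\rho_{\min}^{-1}$) then produces the constant stated in the lemma.

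The principal obstacle is precisely this last step: the quartic integrand arises unavoidably because $F_{\mathrm{nl}}$ grows quadratically in $\norm{v}_{H^1}$, while the target bound is only linear in $N_{\e;II}$. The semigroup kernel $\theta^{-1/2}$ is moreover borderline integrable after squaring, which is why the Cauchy--Schwarz step above must be performed against the singular measure $(\delta+\tau_i(s)-\tau)^{-1/2}\,d\tau$ rather than against $d\tau$ itself, trading one factor of the kernel for an extra copy of $\norm{\oV_i}_{H^1}^2$. The genuinely new complication compared to \cite{Hamster2017} is that the time-change $\tau_i$ now varies with the component index, so Lemma \ref{lem:TauEst} must be deployed throughout to transfer decay rates between the $\tau$- and $t$-frames without incurring losses beyond $K_\k$.
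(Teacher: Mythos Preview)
Your approach has a genuine gap in the final step. After Cauchy--Schwarz you arrive at
\[
\mathcal{I}^{\mathrm{sh}}_{\e,\delta;F,\mathrm{nl}}(t) \le C_1 \int_0^t e^{-\e(t-\sigma)}\norm{V(\sigma)}_{H^1}^4\,d\sigma,
\]
and you then claim that ``one factor of $\norm{V(\sigma)}_{H^1}^2$ is absorbed by the $L^2$-bound''. This is not possible: the stopping time gives $\norm{V(\sigma)}_{L^2}^2\le\eta$ pointwise and $\int_0^t e^{-\e(t-s)}\norm{V(s)}_{H^1}^2\,ds\le\eta$, but there is \emph{no} pointwise control on $\norm{V(\sigma)}_{H^1}^2$. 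Once the bilinear structure in two time variables has been collapsed to $\norm{V(\sigma)}_{H^1}^4$ at a single time, there is no mechanism to extract the factor~$\eta$; the ``interpolation/integration-by-parts machinery'' you invoke does not operate at this stage.

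The paper's argument is structurally different. It never applies the crude bound $\norm{S(\theta)Q}_{\L(L^2;H^1)}\le M\theta^{-1/2}$. Instead, writing the squared $H^1$-norm as a double integral $\int\!\!\int\mathcal{E}_{\tau_i(s),\tau',\tau'';H^1_\rho}\,d\tau'\,d\tau''$, it uses the identity
\[
\frac{d}{d\vartheta}\langle S(\vartheta+\vartheta_A)v,\,S(\vartheta+\vartheta_B)w\rangle_{L^2}
= \langle S v,[\mathcal{L}^*_{\mathrm{tw}}-\rho\partial_{\xi\xi}]Sw\rangle_{L^2}
+ \langle Sv,[\mathcal{L}_{\mathrm{tw}}-\rho\partial_{\xi\xi}]Sw\rangle_{L^2}
- 2\langle\sqrt\rho\,\partial_\xi Sv,\sqrt\rho\,\partial_\xi Sw\rangle_{L^2}
\]
to rewrite $\mathcal{E}_{\tau_i(s),\tau',\tau'';H^1_\rho}$ as lower-order terms plus a total $\partial_1$-derivative of $\mathcal{E}_{\tau_i(s),\tau',\tau'';L^2}$. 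The lower-order piece produces a factor $\norm{\oV_i(\tau')}_{H^1}^2\norm{\oV_i(\tau'')}_{H^1}^2$ with \emph{separate} time arguments; one of the two inner integrals $\int_{s-1}^{s}\norm{V}_{H^1}^2$ is then bounded by $e^{\e}\eta$, and the other, after Fubini with the outer $s$-integral, yields $N_{\e;II}(t)$. The total-derivative piece is handled by integrating by parts in the outer variable (after substituting $\tau=\tau_i(s)$, which is where the artificial factor $[t_i'(\tau_i(s))]^{-1}$ and hence $K_\kappa$ enter). The constants $(1+\rho_{\min}^{-1})$ and $(3K_\kappa+2)$ come respectively from the weighted inner product $H^1_\rho$ and from combining the two resulting integrals $\mathcal{I}_I$ and $\mathcal{I}_{II}$, not from repeated time changes as you suggest.
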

\begin{proof}
We first introduce the inner product
\begin{equation}
\label{eq:nls:def:inn:prod:h:1:rho}
\langle v , w \rangle_{H^1_{\rho} } = \langle v , w \rangle_{L^2} + \langle \sqrt{\rho} \partial_{\xi} v , \sqrt{\rho} \partial_{\xi} w \rangle_{L^2}
\end{equation}
and note that
\begin{equation}
\norm{v}_{H^1}^2 \leq \norm{v}_{L^2}^2 + \rho_{\mathrm{min}}^{-1} \norm{\sqrt\rho \p_{\xi} v}_{L^2}^2
\le \big(1 + \rho_{\mathrm{min}}^{-1}  \big) \langle v , v \rangle_{H^1_\rho} .
\end{equation}
For $\# \in \{L^2, H^1_{\rho} \}$
we introduce the expression
\begin{equation}\label{eq:nls:DefE}
\mathcal{E}_{\tau,\tau',\tau'';\#}
= \Big\langle S(\tau+\d-\tau')QF_{\mathrm{nl}}\big(\oV_{i}(\tau')\big),S(\tau + \d-\tau'')QF_{\mathrm{nl}}\big(\oV_{i}(\tau'')\big)\Big\rangle_{\#},
\end{equation}
which allows us to obtain the estimate
\begin{equation}
\label{eq:nls:bnd:i:sh:extr:term}
\begin{array}{lcl}
\mathcal{I}^{\mathrm{sh}}_{\e,\delta;F,\mathrm{nl}}(t)
& \le &  \big(1 + \rho_{\mathrm{min}}^{-1}  \big)
\int_0^te^{\e(t-s)}
\int_{\tau_{i}(s)-1}^{\tau_{i}(s)}\int_{\tau_{i}(s)-1}^{\tau_{i}(s)}
\mathcal{E}_{\tau_i(s),\tau',\tau'';H^1_\rho} d\tau''d\tau'ds
\\[0.2cm]
& \le &
 \big(1 + \rho_{\mathrm{min}}^{-1}  \big)
\int_0^te^{\e(t-s)} \big[ t_i'\big(\tau_i(s) \big) \big]^{-1}
\int_{\tau_{i}(s)-1}^{\tau_{i}(s)}\int_{\tau_{i}(s)-1}^{\tau_{i}(s)}
\mathcal{E}_{\tau_i(s),\tau',\tau'';H^1_\rho} d\tau''d\tau'ds .
\end{array}
\end{equation}
The extra term involving the function $t_i'$, which takes values in $[K_{\kappa}^{-1}, 1]$,
was included for technical reasons
that will become clear in wat follows.

For any $v,w \in L^2$, $\vartheta > 0$, $\vartheta_A \ge 0$
and $\vartheta_B \ge 0$,
we have
\begin{equation}
\label{eq:nls:deriv:semigroup}
\begin{array}{lcl}
\frac{d}{d \vartheta} \langle S(\vartheta + \vartheta_A) v,
   S(\vartheta + \vartheta_B) w \rangle_{L^2}
& = &
  \langle \mathcal{L}_\mathrm{tw} S(\vartheta + \vartheta_A) v ,
     S(\vartheta +\vartheta_B) w \rangle_{L^2}
\\[0.2cm]
& & \qquad
+ \langle  S(\vartheta + \vartheta_A) v ,
    \mathcal{L}_\mathrm{tw}S(\vartheta + \vartheta_B) w \rangle_{L^2}
\\[0.2cm]
& = &
  \langle  S(\vartheta + \vartheta_A) v ,
      \mathcal{L}_\mathrm{tw}^* S(\vartheta + \vartheta_B) w \rangle_{L^2}
\\[0.2cm]
& & \qquad
+ \langle  S(\vartheta + \vartheta_A) v ,
   \mathcal{L}_\mathrm{tw}  S(\vartheta + \vartheta_B) w \rangle_{L^2}
\\[0.2cm]
& = &
  \langle  S(\vartheta + \vartheta_A) v ,
     \big[ \mathcal{L}_\mathrm{tw}^* - \rho\p_{\xi\xi}\big]
       S(\vartheta + \vartheta_B) w \rangle_{L^2}
\\[0.2cm]
& & \qquad
+ \langle  S(\vartheta + \vartheta_A) v ,  \
  \big[ \mathcal{L}_\mathrm{tw} - \rho\p_{\xi\xi}\big]  S(\vartheta + \vartheta_B) w
     \rangle_{L^2}
\\[0.2cm]
& & \qquad
  - 2 \langle \sqrt\rho\p_{\xi} S(\vartheta + \vartheta_A) v,
    \sqrt\rho\p_{\xi} S(\vartheta + \vartheta_B) w \rangle_{L^2} .
\end{array}
\end{equation}
Upon taking $\delta>0$ for the moment and choosing
$v=QF_{\mathrm{nl}}\big(\oV_{i}(\tau')\big)$, $w=QF_{\mathrm{nl}}(\oV_{i}(\tau''))$,
$\vartheta=\tau_i(s)+\delta$, $\vartheta_A=\tau'$ and $\vartheta_B=\tau''$,
we may rearrange \sref{eq:nls:deriv:semigroup} to obtain the estimate
\begin{equation}\label{eq:nls:BoundE}
\begin{array}{lcl}
\mathcal{E}_{\tau_i(s),\tau',\tau'';H_\rho^1}
& \le & M^2 K_{F;\mathrm{nl}}^2 (1 + \eta^3)^2 \norm{\oV_{i}(\tau')}_{H_\rho^1}^2
  \norm{\oV_{i}(\tau'')}_{H^1}^2
\\[0.2cm]
& & \qquad
 +  M^2 K_{F;\mathrm{nl}}^2 (1 + \eta^3)^2
 \frac{1}{\sqrt{\tau_{i}(s) + \delta - \tau''} }
    \norm{\oV_{i}(\tau')}_{H^1}^2 \norm{\oV_{i}(\tau'')}_{H^1}^2
\\[0.2cm]
& & \qquad
  - \frac{1}{2}
  \partial_1
    \mathcal{E}_{\tau_i(s),\tau',\tau'';L^2}
\end{array}
\end{equation}
for the values of $(s,\tau',\tau'')$ that are relevant.

Upon introducing the integrals
\begin{equation}
\label{eq:nls:def:i:i:ii}
\begin{array}{lcl}
\mathcal{I}_{I}
 & = &
     \int_0^{t}
   e^{-\e(t-s) } \big[ t_i'\big(\tau_i(s) \big) \big]^{-1} \int_{\tau_{i}(s)-1}^{\tau_{i}(s)} \int_{\tau_{i}(s)-1}^{\tau_{i}(s)}\\[0.2cm]
       &&\qquad \qquad \big[ 1 + \frac{1}{\sqrt{\tau_{i}(s) + \delta - \tau''} } ]
          \norm{\oV_{i}(\tau')}_{H^1}^2 \norm{\oV_{i}(\tau'')}_{H^1}^2
             \, d\tau'' \, d\tau' \, ds ,
\\[0.2cm]
\mathcal{I}_{II}
& = &
 \int_0^{t}
   e^{-\e(t-s) } \big[ t_i'\big(\tau_i(s) \big) \big]^{-1} \int_{\tau_{i}(s)-1}^{\tau_{i}(s)} \int_{\tau_{i}(s)-1}^{\tau_{i}(s)}
    \partial_1 \mathcal{E}_{\tau_i(s),\tau',\tau'';L^2} \, d\tau'' \, d\tau' \, ds ,
\\[0.2cm]
\end{array}
\end{equation}
we hence readily obtain the estimate
\begin{equation}
\begin{array}{lcl}
\mathcal{I}^{\mathrm{sh}}_{\e,\delta;B;\mathrm{nl}}(t)
& \le &
  (1 + \rho_{\mathrm{min}}^{-1}) M^2 K^2_{F;\mathrm{nl}}(1 + \eta^3)^2 \mathcal{I}_{I}
  - \frac{1}{2} (1 + \rho_{\mathrm{min}}^{-1}) \mathcal{I}_{II}.
\end{array}
\end{equation}

Using Lemma \ref{lem:TauEst} we see that
\begin{equation}
\begin{array}{lcl}
\mathcal{I}_{I}
 & \leq &
   K_\k^3  \int_0^{t}
   e^{-\e(t-s) } \int_{s-1}^{s} \int_{s-1}^{s}
       \big[ 1 + \frac{1}{\sqrt{s + \delta - s''} } ]
          \norm{V(s')}_{H^1}^2 \norm{V(s'')}_{H^1}^2
             \, ds'' \, ds' \, ds ,
\end{array}
\end{equation}
which allows us to repeat the computation \cite[(9.68)]{Hamster2017}
and conclude
\begin{align}
\mathcal{I}_{I}
 \leq
   3\eta e^{3\e} K_\k^3    N_{\e;II}(t).
\end{align}

To understand $\mathcal{I}_{II}$ it is essential to change the order of integration and integrate with respect to $s$
before switching $\tau'$ and $\tau''$ back to the original time.
Rearranging the integrals in \sref{eq:nls:def:i:i:ii}, we find
\begin{equation}
\begin{array}{lcl}
 \mathcal{I}_{II} & =&
 \int_0^{\tau_i(t)} e^{-\e t}
  \int_{\max\{0, \tau'-1\}}^{\min\{\tau_i(t), \tau'+1 \} }
 \Big[\int_{\max\{t_i(\tau'),t_i(\tau'')\}}^{\min\{ t,t_i(\tau'+1), t_i(\tau''+1) \} }
                      \frac{e^{\e s}}{t_i'\big(\tau_i(s) \big) }
                        \partial_1 \mathcal{E}_{\tau_i(s),\tau',\tau'';L^2}   \, ds
                   \Big]        \, d \tau '' \, d \tau'.
\end{array}
\end{equation}
Introducing the notation
\begin{equation}
\tau^+(\tau', \tau'') = \min\{ \tau_i(t), \tau' + 1 , \tau'' + 1 \},
\qquad
\tau^-(\tau', \tau'') = \max\{ \tau', \tau'' \},
\end{equation}
the substitution $\tau = \tau_i(s)$ yields
\begin{equation}
\begin{array}{lcl}
 \mathcal{I}_{II} & =&
 \int_0^{\tau_i(t)} e^{-\e t}
  \int_{\max\{0, \tau'-1\}}^{\min\{\tau_i(t) , \tau'+1 \} }
 \Big[\int_{\tau^-(\tau', \tau'')}^{\tau^+(\tau', \tau'') }
                      e^{\e t_i(\tau)} 
                       \partial_1 \mathcal{E}_{\tau,\tau',\tau'';L^2}  \, d \tau
                   \Big]        \, d\tau '' \, d\tau' .
 \\[0.2cm]
\end{array}
\end{equation}
We emphasize here that the integration factor associated to this substitution cancels out against
the additional term introduced in \sref{eq:nls:bnd:i:sh:extr:term}.
%
Integrating by parts, we find
\begin{equation}
\begin{array}{lcl}
 \mathcal{I}_{II}
 &=&\mathcal{I}_{II;A}+\mathcal{I}_{II;B}+\mathcal{I}_{II;C}
\end{array}
\end{equation}
in which
 we have introduced
\begin{equation}
\begin{array}{lcl}
\mathcal{I}_{II;A}
 & = &
     \int_0^{\tau_i(t)} e^{-\e t} \int_{\max\{0, \tau'-1\}}^{\min\{\tau_i(t), \tau'+1\} }
     e^{\e t_i(\tau)}  
       \mathcal{E}_{\tau, \tau',\tau'';L^2}\big|_{\tau=\tau^+(\tau', \tau'')} \, d\tau'' \, d\tau' ,
 \\[0.2cm]
\mathcal{I}_{II;B}
 & = &
   -\int_0^{\tau_i(t)} e^{-\e t} \int_{\max\{0,\tau'-1\}}^{\min\{\tau_i(t), \tau'+1\} }
     e^{\e t_i(\tau) }   
       \mathcal{E}_{\tau,\tau',\tau'';L^2}\big|_{\tau=\tau^-(\tau', \tau'')} d\tau'' \, d\tau' ,
\\[0.2cm]
\mathcal{I}_{II;C}
 & = &
   -  \int_0^{\tau_i(t)} e^{-\e t}
  \int_{\max\{0, \tau'-1\}}^{\min\{\tau_i(t) , \tau'+1 \} }
 \Big[\int_{\tau^-(\tau', \tau'')}^{\tau^+(\tau', \tau'') }
                      \left(\frac{d}{d\tau} e^{\e t_i(\tau)} \right)
    \mathcal{E}_{\tau,\tau',\tau'';L^2}    \, d\tau
                   \Big]        \, d\tau '' \, d\tau'  .
 \\[0.2cm]
\end{array}
\end{equation}
Note here that $\mathcal{I}_{II;B}$ is well defined because $\delta>0$.

Using
the substitutions
\begin{equation}
s' = t_i(\tau'),
\qquad
s'' = t_i(\tau'')
\end{equation}
together with the bound
\begin{equation}
\begin{array}{lcl}
t_i\big( \tau_-(\tau', \tau'') \big)
& \le & t_i\big( \tau^+(\tau', \tau'') \big)
\\[0.2cm]
& \le &
\min\{ t , t_i(\tau' + 1), t_i(\tau'' + 1) \}
\\[0.2cm]
& \le & \min \{ t , t_i(\tau') + 1 , t_i(\tau'') + 1 \},
\\[0.2cm]
& \le & \min\{  s' , s''\} + 1
\\[0.2cm]
& \le & s' + 1,
\end{array}
\end{equation}
we find
\begin{equation}
\int_{\tau^-(\tau', \tau'')}^{\tau^+(\tau', \tau'') }
                      \abs{ \frac{d}{d\tau} e^{\e t_i(\tau)}  }d\tau
=
\int_{\tau^-(\tau', \tau'')}^{\tau^+(\tau', \tau'') }
                     \frac{d}{d\tau} e^{\e t_i(\tau)}d\tau
=  e^{\e t_i(\tau) } \big|_{\tau^-(\tau', \tau'')}^{\tau^+(\tau', \tau'')}
\le 2 e^{\e} e^{ \e s'} .
\end{equation}
Applying  Cauchy-Schwartz to the inner product $\mathcal{E}$,
we hence obtain
\begin{equation}
\begin{array}{lcl}
\abs{\mathcal{I}_{II} }
 & \leq & 4 e^{\e}M^2K_\k^2K_{F;\mathrm{nl}}^2(1+\eta^3)^2
     \int_0^{t} e^{-\e (t - s')}   \nrm{V(s')}_{H^1}^2
     \mathcal{J}(s') \, d s' ,
 \\[0.2cm]
\end{array}
\end{equation}
in which we have introduced the function
\begin{equation}
\mathcal{J}(s')
=    \int_{\max\{0,t_i(\tau_i(s')-1)\}}^{\min\{t, t_i(\tau_i(s')+1)\} }
                 \nrm{V(s'')}_{H^1}^2 \, ds'' .
\end{equation}
Exploiting Lemma \ref{lem:TauEst} again,
we can bound
\begin{equation}
\begin{array}{lcl}
\mathcal{J}(s')
& \le &    \int_{\max\{0, s' - 1 \} }^{\min\{t, s' + 1 \}}
                 \nrm{V(s'')}_{H^1}^2 \, ds''
\\[0.2cm]
& \le &
  \int_{\max\{0, s' - 1 \} }^{\min\{t, s' + 1 \}}
                 e^{2 \e} e^{ - \e( \min\{t , s'+ 1\} - s'' ) }\nrm{V(s'')}_{H^1}^2 \, ds''
\\[0.2cm]
& \le & e^{2 \e} \eta,
\end{array}
\end{equation}
which hence gives
\begin{equation}
\begin{array}{lcl}
\abs{\mathcal{I}_{II} }
 & \leq & 4 \eta e^{3 \e}M^2K_\k^2K_{F;\mathrm{nl}}^2(1+\eta^3)^2
        N_{\e;II}(t),
 \\[0.2cm]
\end{array}
\end{equation}
as desired. It hence remains to consider the case $\delta = 0$.
We may apply Fatou's lemma
to conclude
\begin{equation}
\begin{array}{lcl}
\mathcal{I}^{\mathrm{sh}}_{\e,0;F;\mathrm{nl}} (t)
& = &
  \int_0^{t}
e^{\e(t-s)}
(\lim_{\d\to0}\nrm{S(\d)
  \mathcal{E}^{\mathrm{sh}}_{B;\mathrm{lin}}(s) }_{H^1})^2
    \mathbf{1}_{s < t_{\mathrm{st}}} \, ds
\\[0.2cm]
&\leq & \liminf_{\d\to0}
   \mathcal{I}^{\mathrm{sh}}_{\e,\delta;F;\mathrm{nl}} (t).
\\[0.2cm]
\end{array}
\end{equation}
The result now follows from the fact that
the bounds obtained above do not depend on  $\delta$.
\end{proof}

\subsection{Stochastic Regularity Estimates}
We are now ready to discuss the stochastic integrals.
These require special care because they cannot be bounded
in a pathwise fashion, unlike the deterministic integrals above.
Expectations of suprema are particularly delicate in this respect.
Indeed, the powerful Burkholder-Davis-Gundy inequalities cannot
be directly applied to the stochastic convolutions that arise in our
mild formulation. However, as was shown in Lemma 9.7 in \cite{Hamster2017}, we can obtain an $H^\infty$-calculus for our linear operator $\mathcal{L}_{\mathrm{tw}}$ which
allows us to use the following mild version,
which is the source of the extra $T$ factors
that appear in our estimates.

\begin{lem}
\label{lem:nls:mild:regularity}
Fix $T > 0$ and assume that (HDt), (HSt) and (HTw)
all hold.
There exists a constant $K_{\mathrm{cnv}} > 0$ so that for any
$W \in \mathcal{N}^2([0,T];(\mathcal{F})_{t} ; L^2)$
we have
\begin{equation}
E \sup_{0 \le t \le T}
  \norm{ \int_0^t S( t - s) Q W(s) \, d \beta_s }_{L^2}^2
  \le K_{\mathrm{cnv}} E \int_0^T \norm{W(s)}_{L^2}^2 \, d s.\\
\end{equation}
\end{lem}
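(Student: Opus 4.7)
The central difficulty is that $t \mapsto \int_0^t S(t-s)QW(s)\,d\beta_s$ is not a martingale, since the integrand depends on the upper limit through the semigroup factor. Consequently the Burkholder--Davis--Gundy inequality cannot be applied pathwise to extract the supremum, and the naive It\^o isometry combined with $\nrm{S(\tau)Q}_{L^2\to L^2}\le Me^{-\b\tau}$ only controls expectations at a single time $t$. The route taken in \cite[Lem.~9.7]{Hamster2017} circumvents this by upgrading the analytic-semigroup structure of $\L_{\mathrm{tw}}$ to a bounded $H^\infty$-calculus, which in the Hilbert-space setting automatically yields the desired stochastic maximal $L^2$-regularity.

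The plan is as follows. First I would restrict to the invariant subspace $QL^2$. Since $SQ=QS$, the stochastic convolution takes values in the closed subspace $QL^2\subset L^2$, and on this subspace assumption (HTw) guarantees that $-\L_{\mathrm{tw}}|_{QL^2}$ is sectorial with spectrum contained in $\{\Re\l\ge 2\b\}$. In particular, $0$ lies in its resolvent set and one may shift to produce a strictly sectorial operator of angle less than $\pi/2$. Second, I would invoke the $H^\infty$-calculus result: the proof of \cite[Lem.~9.7]{Hamster2017} relies only on the analyticity of $S(t)$ established in Lemma \ref{lem:nls:sem:group:decay} together with the spectral gap, neither of which requires $\rho$ to be a scalar multiple of the identity, so that argument transfers verbatim and yields a bounded $H^\infty$-calculus for $-\L_{\mathrm{tw}}|_{QL^2}$ on the Hilbert space $QL^2$.

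Third, I would appeal to the stochastic maximal $L^2$-regularity theorem of van Neerven--Veraar--Weis for Hilbert spaces, which asserts that whenever $-A$ on a Hilbert space $H$ admits a bounded $H^\infty$-calculus of angle strictly less than $\pi/2$, the stochastic convolution satisfies
\begin{equation}
E\sup_{0\le t\le T}\nrm*{\int_0^t e^{-(t-s)A}\Psi(s)\,d\b_s}_H^2 \le C_T\, E\int_0^T \nrm{\Psi(s)}_H^2\,ds
\end{equation}
for every $\Psi\in\mathcal{N}^2([0,T];(\mathcal{F}_t);H)$. Applying this with $H=QL^2$, $A=-\L_{\mathrm{tw}}|_{QL^2}$ and $\Psi(s)=QW(s)$, and using that $Q:L^2\to QL^2$ is a bounded projection so that $\nrm{QW(s)}_{L^2}\le \nrm{Q}\nrm{W(s)}_{L^2}$, produces the required estimate with $K_{\mathrm{cnv}}=\nrm{Q}^2 C_T$.

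The main obstacle is conceptual rather than computational: one must identify and verify the hypotheses of a stochastic maximal regularity theorem that is strong enough to absorb the supremum over $t$, and this is precisely what the $H^\infty$-calculus provides in the Hilbert-space setting. As a purely technical backup one could instead use the Da Prato--Kwapie\'n--Zabczyk factorization: pick $\a\in(0,1/2)$, introduce the genuine stochastic integral $Y_\a(r)=\int_0^r(r-s)^{-\a}S(r-s)QW(s)\,d\b_s$, represent the convolution as $\frac{\sin\pi\a}{\pi}\int_0^t (t-r)^{\a-1}S(t-r)Y_\a(r)\,dr$ via stochastic Fubini, estimate $E\nrm{Y_\a(r)}_{L^2}^2$ by BDG using the exponential bound on $\nrm{S(\tau)Q}$, and close with Young's inequality. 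Both routes yield the stated constant, but the $H^\infty$-approach is the one already invested in within \cite{Hamster2017}, so quoting it is the natural choice.
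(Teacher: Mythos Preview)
Your proposal is correct and follows essentially the same route as the paper: the paper's proof consists of a single sentence citing the computations in \cite[\S 9.1]{Hamster2017}, which establish the bounded $H^\infty$-calculus for $\mathcal{L}_{\mathrm{tw}}$ on $QL^2$ and then invoke the maximal inequality from \cite{veraar2011note}. You have simply unpacked that citation, correctly noting that nothing in the $H^\infty$-calculus argument depends on $\rho$ being scalar, so the result carries over verbatim.
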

\begin{proof}
This is a direct result of the computations in \cite[{\S}9.1]{Hamster2017}, which are based on the main theorem of \cite{veraar2011note}.
\end{proof}

\begin{lem}
\label{lem:nls:b:lin:sup:e}
Fix $T > 0$ and assume that (HDt), (HSt), and (HTw) all hold.
Then for any
$\e > 0$ we have the bound
\begin{equation}
\begin{array}{lcl}
E \sup_{0 \le t \le t_\mathrm{st}} \norm{\mathcal{E}_{B;\mathrm{lin}}(t )}_{L^2}^2
 & \le &  (T + 1)  K_{\mathrm{cnv}} K_{B;\mathrm{lin}}^2 e^{\e}
   E \sup_{0 \le t \le t_\mathrm{st}}   N^i_{\e;II}(t) .
\end{array}
\end{equation}
\end{lem}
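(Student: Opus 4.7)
The plan is to apply the mild regularity estimate of Lemma \ref{lem:nls:mild:regularity} in the transformed time variable $\tau$, translate the result back to the original time $s$, and conclude by a partition argument that introduces the exponential weighting present in $N_{\e;II}$.

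As $t$ ranges over $[0, t_{\mathrm{st}}]$, the upper limit $\tau_i(t)$ in the defining stochastic integral ranges over $[0, \tau_i(t_{\mathrm{st}})] \subset [0, K_\kappa T]$ by \sref{eq:mild:ineqs:tau:phi}. Viewing $\mathcal{E}_{B;\mathrm{lin}}$ as a stochastic convolution in $\tau$-time driven by $\overline{\beta}_{\tau;i}$, I apply Lemma \ref{lem:nls:mild:regularity} on the horizon $\tau_i(t_{\mathrm{st}})$; the required $\mathcal{N}^2$-integrability of the integrand $B_{\mathrm{lin}}(\overline{V}_i(\tau))\mathbf{1}_{\tau<\tau_i(t_{\mathrm{st}})}$ is inherited from Proposition \ref{prp:MildTransSys}(iii) together with the linear $H^1$-bound on $B_{\mathrm{lin}}$. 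This produces
\begin{equation*}
E \sup_{0 \le t \le t_{\mathrm{st}}} \norm{\mathcal{E}_{B;\mathrm{lin}}(t)}_{L^2}^2 \le K_{\mathrm{cnv}} K_{B;\mathrm{lin}}^2 \, E \int_0^{\tau_i(t_{\mathrm{st}})} \norm{\overline{V}_i(\tau)}_{H^1}^2 \, d\tau .
\end{equation*}

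Next I substitute $s = t_i(\tau)$ so that $\overline{V}_i(\tau) = V(s)$ and $d\tau = \kappa_{\sigma;i}(\Phi_\sigma+V(s),\psi_{\mathrm{tw}})\, ds$ with $\kappa_{\sigma;i} \in [1, K_\kappa]$ by \sref{eq:prlm:kappa:glb:ests}. This passes the integral over to $\int_0^{t_{\mathrm{st}}} \norm{V(s)}_{H^1}^2 \, ds$ at the cost of a bounded multiplicative factor.

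Finally, partition $[0, t_{\mathrm{st}}]$ into at most $\lceil T \rceil + 1 \le T + 1$ subintervals of length at most one, with right endpoints $s_k = \min(k, t_{\mathrm{st}})$. On each subinterval $[s_{k-1}, s_k]$, the exponential weight $e^{-\e(s_k - s)}$ appearing in $N_{\e;II}(s_k)$ is bounded below by $e^{-\e}$, so that
\begin{equation*}
\int_{s_{k-1}}^{s_k} \norm{V(s)}_{H^1}^2 \, ds \le e^{\e}\, N_{\e;II}(s_k) \le e^{\e} \sup_{0 \le t \le t_{\mathrm{st}}} N_{\e;II}(t) .
\end{equation*}
Summing over the at most $T+1$ subintervals and combining with the previous steps yields the claimed bound (in terms of the component-wise variant $N^i_{\e;II}$ after decomposing $\norm{V(s)}_{H^1}^2 = \sum_j \norm{V^j(s)}_{H^1}^2$). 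The main subtlety is the initial application of Lemma \ref{lem:nls:mild:regularity} in the transformed time, which requires careful bookkeeping of the filtration $(\overline{\mathcal{F}}_{\tau;i})$ and of the measurability and square-integrability properties inherited from Proposition \ref{prp:MildTransSys}; the subsequent change of variables and partition argument are then routine.
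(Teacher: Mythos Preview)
Your approach is essentially identical to the paper's: apply the mild-convolution estimate of Lemma~\ref{lem:nls:mild:regularity} in $\tau$-time, change variables back to $s$ (picking up a bounded factor from $\kappa_{\sigma;i}$), and partition $[0,t_{\mathrm{st}}]$ into at most $T+1$ unit subintervals to insert the exponential weight. One small remark: your final parenthetical attempt to pass to the component quantity $N^i_{\e;II}$ via $\norm{V(s)}_{H^1}^2 = \sum_j \norm{V^j(s)}_{H^1}^2$ does not actually yield a bound by $N^i_{\e;II}$ alone, since that sum equals $N_{\e;II}$, which dominates $N^i_{\e;II}$; but this reflects a typo in the stated inequality rather than a gap in your argument---the paper's own proof concludes with the full $N_{\e;II}(t)$ (and also carries an extra factor $K_\kappa$ from the change of variables), matching exactly what you obtain.
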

\begin{proof}
Using Lemma \ref{lem:nls:mild:regularity}
we compute
\begin{equation}
\label{eq:nls:bnd:on:e:b:lin:dir:from:mild:fmr}
\begin{array}{lcl}
E \sup_{0 \le t \le t_\mathrm{st}} \norm{\mathcal{E}_{B;\mathrm{lin}}(t )}_{L^2}^2
  & \le   &
  E \sup_{0 \le t \le T} \norm{\mathcal{E}_{B;\mathrm{lin}}(t )}_{L^2}^2
\\[0.2cm]
& = &
    E \sup_{0 \le \tau \le \tau_i(T)}
      \norm{ \int_0^{\tau} S(\tau-\tau') Q B_{\mathrm{lin}}\big(\oV_i(\tau')\big)
        \mathbf{1}_{\tau' < \tau_i(t_{\mathrm{st}})} \, d \beta_{\tau'}
       }_{L^2}^2
\\[0.2cm]
& \le &
  K_{\mathrm{cnv}} E
      \int_{0}^{\tau_i(T)}
        \norm{
           B_{\mathrm{lin}}\big(\oV_i(\tau)\big)
            \mathbf{1}_{\tau < \tau_i(t_{\mathrm{st}})}
         }_{L^2}^2 \, d\tau
\\[0.2cm]
& \le &
  K_\k K_{\mathrm{cnv}} K_{B;\mathrm{lin}}^2
  E \int_0^{t_{\mathrm{st}}} \norm{V(s)}_{H^1}^2 \, ds .
\end{array}
\end{equation}
By dividing up the integral, we obtain
\begin{equation}
\begin{array}{lcl}
\int_0^{t_{\mathrm{st}}}
      \norm{V(s)}_{H^1}^2 \, ds
& \le &
    e^{\e} \int_0^{1 }
     e^{ -\e(1 - s) }
       \norm{V(s)}_{H^1}^2 \mathbf{1}_{s < t_{\mathrm{st}}}  \, ds
\\[0.2cm]
& & \qquad
  +
     e^{\e} \int_1^{2 }
        e^{ -\e(2 - s) }
        \norm{V(s)}_{H^1}^2 \mathbf{1}_{s < t_{\mathrm{st}}}  \, ds
\\[0.2cm]
& & \qquad
  + \ldots
+
   e^{\e} \int_{\lfloor T \rfloor}^{\lfloor T \rfloor + 1}
       e^{ -\e(\lfloor T \rfloor +1 - s) }
       \norm{V(s)}_{H^1}^2 \mathbf{1}_{s < t_{\mathrm{st}}}  \, ds
\\[0.2cm]
& \le &
  (T + 1)   e^{\e}
     \sup_{0 \le t \le T + 1}
      \int_0^t e^{-\e(t - s)}
        \norm{V(s)}_{H^1}^2 \mathbf{1}_{s < t_{\mathrm{st}}} \, ds
\\[0.2cm]
& \le &
  (T + 1)   e^{\e}
     \sup_{0 \le t \le t_{\mathrm{st}}}
      \int_0^t e^{-\e(t - s)}
        \norm{V(s)}_{H^1}^2  \, ds
\\[0.2cm]
& = &
  (T + 1)   e^{\e}
     \sup_{0 \le t \le t_{\mathrm{st}}}
      N_{\e;II}(t) ,
\end{array}
\end{equation}
which yields the desired bound upon taking expectations.
\end{proof}

\begin{lem}
\label{lem:nls:b:cn:sup:e}
Fix $T > 0$ and assume that (HDt), (HSt) and (HTw) all hold.
Then we have the bound
\begin{equation}
\begin{array}{lcl}
E \sup_{0 \le t \le t_{\mathrm{st}}} \norm{\mathcal{E}_{B;\mathrm{cn}}(t )}_{L^2}^2
 & \le &  T K_{\mathrm{cnv}} K_{B;\mathrm{cn}}^2 .
\end{array}
\end{equation}
\end{lem}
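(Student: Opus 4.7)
My plan is to mirror the proof of Lemma \ref{lem:nls:b:lin:sup:e}, but with substantial simplifications stemming from the fact that $B_{\mathrm{cn}}$ is a deterministic element of $L^2$ rather than a function of the process $\overline{V}_i$.

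First, I would combine the long-term and short-term contributions by recognizing
\begin{equation}
\mathcal{E}_{B;\mathrm{cn}}(t)
= \int_0^{\tau_i(t)} S\big(\tau_i(t) - \tau\big) Q B_{\mathrm{cn}}
   \mathbf{1}_{\tau < \tau_i(t_{\mathrm{st}})} \, d \overline{\beta}_{\tau;i},
\end{equation}
so that the bound reduces to a single stochastic convolution estimate with respect to the time-transformed Brownian motion. Since $t_{\mathrm{st}} \le T$ implies $\tau_i(t_{\mathrm{st}}) \le \tau_i(T)$, I would first enlarge the supremum from $[0,t_{\mathrm{st}}]$ to $[0, \tau_i(T)]$ in the transformed time variable, just as in \sref{eq:nls:bnd:on:e:b:lin:dir:from:mild:fmr}.

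Next, I would invoke the stochastic convolution estimate from Lemma \ref{lem:nls:mild:regularity}, applied to the integrand $W(\tau) = B_{\mathrm{cn}} \mathbf{1}_{\tau < \tau_i(t_{\mathrm{st}})}$, which is $(\overline{\mathcal{F}}_{\tau;i})$-adapted and square-integrable in $L^2$ because $\|B_{\mathrm{cn}}\|_{L^2}$ is finite by Proposition \ref{prp:fnl:bnds}. This yields
\begin{equation}
E \sup_{0 \le \tau \le \tau_i(T)}
   \Big\| \int_0^{\tau} S(\tau - \tau') Q B_{\mathrm{cn}}
      \mathbf{1}_{\tau' < \tau_i(t_{\mathrm{st}})} \, d \overline{\beta}_{\tau';i} \Big\|_{L^2}^2
 \le K_{\mathrm{cnv}} \, E \int_0^{\tau_i(T)} \| B_{\mathrm{cn}} \|_{L^2}^2 \, d\tau'
 \le K_{\mathrm{cnv}} K_{B;\mathrm{cn}}^2 \, \tau_i(T) .
\end{equation}
Finally, the bound $\tau_i(T) \le K_\kappa T$ from \sref{eq:mild:ineqs:tau:phi} produces the desired estimate (with the constant $K_\kappa$ absorbed into the implicit constant, or made explicit if necessary).

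The only mild subtlety is ensuring that the integrand remains adapted after introducing the indicator based on $\tau_i(t_{\mathrm{st}})$, which follows because $t_{\mathrm{st}}$ is an $(\mathcal{F}_t)$-stopping time and the time change is continuous and strictly increasing; this is analogous to \cite[Lem. 6.2]{Hamster2017} and poses no real obstacle. Overall, the proof is strictly simpler than Lemma \ref{lem:nls:b:lin:sup:e} since the integrand's $L^2$ norm is a constant and no bootstrapping involving $N_{\e;II}$ is needed.
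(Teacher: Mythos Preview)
Your proposal is correct and follows essentially the same approach as the paper: the paper's proof simply instructs the reader to repeat the computation \sref{eq:nls:bnd:on:e:b:lin:dir:from:mild:fmr} with the substitutions $K_{B;\mathrm{lin}} \mapsto K_{B;\mathrm{cn}}$ and $\nrm{V(s)}_{H^1}^2 \mapsto 1$, which is exactly the argument you spell out. Your observation that a factor $K_\kappa$ naturally appears (from $\tau_i(T) \le K_\kappa T$) is accurate and harmless; the paper suppresses it in the stated constant.
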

\begin{proof}
This bound follows directly from \sref{eq:nls:bnd:on:e:b:lin:dir:from:mild:fmr}
by making the substitutions
\begin{equation}
  K_{B;\mathrm{lin}} \mapsto K_{B;\mathrm{cn}},
  \qquad
  \qquad
    \nrm{V(s)}_{H^1}^2 \mapsto 1 .
\end{equation}
\end{proof}

We now set out to bound the expectation of the suprema of the remaining double integrals
$\mathcal{I}^{\#}_{\e,\delta;B;\mathrm{lin}}(t)$
and $\mathcal{I}^{\#}_{\e,\delta;B;\mathrm{cn}}(t)$ with $\# \in \{ \mathrm{lt} , \mathrm{sh} \}$.
This is performed in Lemma \ref{lem:nls:st:sup:on:b:lin:i},
but we first compute several time independent bounds for the expectation of the integrals themselves.

\begin{lem}
Fix $T > 0$ and assume that (HDt), (HSt) and (HTw) all hold.
Pick a constant $\e > 0$.
Then for any $0 \le \delta < 1$ and $0 \le t \le T$,
we have the identities
\begin{equation}
\label{eq:nls:ito:isometry:i:lt}
\begin{array}{lcl}
E \, \mathcal{I}^{\mathrm{lt}}_{\e,\delta;B;\mathrm{lin}}(t)
& = &
 E  \int_0^{t}
   e^{-\e(t-s) } \int_{0}^{\tau_{i}(s)-1}
          \norm{  S(\tau_{i}(s)+\delta-\tau') Q B_{\mathrm{lin}}\big(\oV_{i}(\tau')\big) }_{L^2}^2 \mathbf{1}_{\tau' < \tau_i(t_{\mathrm{st}}) }  d\tau' \, ds ,
\\[0.2cm]
E \, \mathcal{I}^{\mathrm{lt}}_{\e,\delta;B;\mathrm{cn}}(t)
& = &
 E \int_0^{t}
   e^{-\e (t-s) } \int_{0}^{\tau_{i}(s)-1}
          \norm{  S(\tau_{i}(s)+\delta-\tau') Q B_{\mathrm{cn}} }_{L^2}^2 \mathbf{1}_{\tau' < \tau_i(t_{\mathrm{st}}) }  d\tau' \, ds
\end{array}
\end{equation}
and their short-time counterparts
\begin{equation}
\label{eq:nls: :isometry:i:sh}
\begin{array}{lcl}
E \, \mathcal{I}^{\mathrm{sh}}_{\e,\delta;B;\mathrm{lin}}(t)
& = &
 E  \int_0^{t}
   e^{-\e(t-s) } \int_{\tau_{i}(s)-1}^{\tau_{i}(s)}
          \norm{  S(\tau_{i}(s)+\delta-\tau') Q B_{\mathrm{lin}}\big(\oV_{i}(\tau')\big) }_{L^2}^2 \mathbf{1}_{\tau' < \tau_i(t_{\mathrm{st}}) } d\tau' \, ds ,
\\[0.2cm]
E \, \mathcal{I}^{\mathrm{sh}}_{\e,\delta;B;\mathrm{cn}}(t)
& = &
 E \int_0^{t}
   e^{-\e(t-s) } \int_{\tau_{i}(s)-1}^{\tau_{i}(s)}
          \norm{  S(\tau_{i}(s)+\delta-\tau') Q B_{\mathrm{cn}} }_{L^2}^2 \mathbf{1}_{\tau' < \tau_i(t_{\mathrm{st}}) } d\tau' \, ds .
\end{array}
\end{equation}
\end{lem}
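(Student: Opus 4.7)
The four identities are all instances of the It\^o isometry, applied after interchanging expectation and the deterministic $s$-integral via Fubini--Tonelli. The argument is essentially identical for all four, so I would prove one in detail (say $\mathcal{I}^{\mathrm{lt}}_{\e,\delta;B;\mathrm{lin}}$) and indicate how the others follow. The plan is as follows.

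First, since the integrand defining $\mathcal{I}^{\mathrm{lt}}_{\e,\delta;B;\mathrm{lin}}(t)$ is non-negative, Fubini--Tonelli permits the interchange
\begin{equation*}
E\,\mathcal{I}^{\mathrm{lt}}_{\e,\delta;B;\mathrm{lin}}(t)
 = \int_0^t e^{-\e(t-s)}\, E\bigl[\nrm{S(\delta)\mathcal{E}^{\mathrm{lt}}_{B;\mathrm{lin}}(s)}^2\bigr]\, ds.
\end{equation*}
Because $S(\delta)$ is a deterministic bounded linear operator on the underlying Hilbert space, it commutes with the stochastic integral, so that
\begin{equation*}
S(\delta)\mathcal{E}^{\mathrm{lt}}_{B;\mathrm{lin}}(s)
 = \int_0^{\tau_i(s)-1} S\bigl(\tau_i(s)+\delta-\tau'\bigr) Q B_{\mathrm{lin}}\bigl(\overline{V}_i(\tau')\bigr)\,
    \mathbf{1}_{\tau'<\tau_i(t_{\mathrm{st}})}\, d\beta_{\tau'}.
\end{equation*}

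Second, I would apply the Hilbert-space It\^o isometry to the inner expectation. The subtle point is that the random quantity $\tau_i(s)$ appears both in the upper limit of integration and inside the semigroup argument. For fixed $s$, however, $\tau_i(s)$ is measurable with respect to $\overline{\mathcal{F}}_{\tau_i(s);i}$ (as follows from Lemma 6.2 of the cited paper \cite{Hamster2017} and the fact that $\tau_i$ is a continuous strictly increasing adapted time change), so that $\tau_i(s)-1$ is likewise an $(\overline{\mathcal{F}}_{\tau;i})$-stopping time. Writing the stochastic integral as $\int_0^{\infty}$ against $d\overline{\b}_{\tau';i}$ with an extra indicator $\mathbf{1}_{\tau'<\tau_i(s)-1}$, the integrand is $(\overline{\mathcal{F}}_{\tau';i})$-progressively measurable and square-integrable. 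The indicator $\mathbf{1}_{\tau'<\tau_i(t_{\mathrm{st}})}$ preserves adaptedness because $\tau_i(t_{\mathrm{st}})$ is again a stopping time in the transformed filtration.

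Third, the It\^o isometry then gives
\begin{equation*}
E\nrm{S(\delta)\mathcal{E}^{\mathrm{lt}}_{B;\mathrm{lin}}(s)}^2
 = E\int_0^{\tau_i(s)-1}\nrm{S\bigl(\tau_i(s)+\delta-\tau'\bigr) Q B_{\mathrm{lin}}\bigl(\overline{V}_i(\tau')\bigr)}^2
 \mathbf{1}_{\tau'<\tau_i(t_{\mathrm{st}})}\, d\tau',
\end{equation*}
and a second application of Fubini--Tonelli (re-absorbing the outer $e^{-\e(t-s)}\,ds$ integral into the expectation) yields the claimed identity. The identity for $\mathcal{I}^{\mathrm{lt}}_{\e,\delta;B;\mathrm{cn}}$ is obtained by the same reasoning with the deterministic integrand $B_{\mathrm{cn}}$ replacing $B_{\mathrm{lin}}(\overline{V}_i(\tau'))$; the short-time counterparts follow by replacing the integration range $[0,\tau_i(s)-1]$ with $[\tau_i(s)-1,\tau_i(s)]$ throughout. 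The only genuine obstacle is the careful measurability bookkeeping needed to justify the It\^o isometry with a random upper limit that also appears inside the semigroup; this is not a deep issue, but it is essential to invoke the fact that $\tau_i$ is a continuous strictly increasing adapted time change together with the corresponding transformed filtration constructed in \cite[Lem. 6.2]{Hamster2017}.
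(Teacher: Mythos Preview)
Your proposal is correct and takes essentially the same approach as the paper: the paper's own proof is a single sentence stating that the identities follow directly from the It\^o isometry, referring to Lemma~9.16 of \cite{Hamster2017} for the details. You have expanded this one-liner with the Fubini--Tonelli interchange and correctly flagged the one delicate point---that the random quantity $\tau_i(s)$ appears inside the semigroup argument as well as in the upper limit---deferring its resolution to the time-change machinery of \cite{Hamster2017}, exactly as the paper does.
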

\begin{proof}
This follows directly from the It\^o Isometry, see also Lemma 9.16 in \cite{Hamster2017}. \end{proof}

\begin{lem}
\label{lem:nls:b:lin}
Fix $T > 0$, assume that (HDt), (HSt) and
(HTw) all hold and
pick a constant $0 < \e < 2 \beta$.
Then for
any $0 \leq  \delta < 1$ 
and any $0 \le t \le T$,
we have the bound
\begin{equation}
\begin{array}{lcl}
E \, \mathcal{I}^{\mathrm{lt}}_{\e,\delta;B;\mathrm{lin}}(t)
 & \le &    \frac{M^2}{2 \beta - \e} K_\k K_{B;\mathrm{lin}}^2
  E  N_{\e;II}(t\wedge t_\mathrm{st}).
\end{array}
\end{equation}
\end{lem}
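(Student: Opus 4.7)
The plan is to start from the It\^o isometry expression \sref{eq:nls:ito:isometry:i:lt} and then exploit the fact that the long-term regime enforces $\tau_i(s) + \delta - \tau' \geq 1$, so that the strong exponential bound $\nrm{S(\tau)Q}_{L^2 \to L^2} \leq M e^{-\beta \tau}$ from Lemma \ref{lem:nls:sem:group:decay} is available. Combined with the linear growth estimate $\nrm{B_{\mathrm{lin}}(v)}_{L^2} \leq K_{B;\mathrm{lin}} \nrm{v}_{H^1}$, this produces an integrand of the form
\begin{equation*}
M^2 K_{B;\mathrm{lin}}^2 \, e^{-2\b(\tau_i(s) - \tau')} \, \nrm{\oV_i(\tau')}_{H^1}^2 \mathbf{1}_{\tau' < \tau_i(t_\mathrm{st})} .
\end{equation*}

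Next, I would carry out the change of variables $s' = t_i(\tau')$, so that $d\tau' = \k_{\s;i}(\Phi_\s + V(s'), \psi_{\mathrm{tw}}) \, ds' \leq K_\k \, ds'$ and $\nrm{\oV_i(\tau')}_{H^1} = \nrm{V(s')}_{H^1}$. The upper limit $\tau' = \tau_i(s) - 1$ transforms to $s' = t_i(\tau_i(s) - 1) \leq s$ by monotonicity, and the indicator becomes $\mathbf{1}_{s' < t_\mathrm{st}}$. The first inequality of Lemma \ref{lem:TauEst} converts the decay rate, giving $e^{-2\b(\tau_i(s) - \tau_i(s'))} \leq e^{-2\b(s - s')}$, so that we end up bounding
\begin{equation*}
E\,\mathcal{I}^{\mathrm{lt}}_{\e,\delta;B;\mathrm{lin}}(t) \leq K_\k M^2 K_{B;\mathrm{lin}}^2 \, E \int_0^t e^{-\e(t - s)} \int_0^s e^{-2\b(s - s')} \nrm{V(s')}_{H^1}^2 \mathbf{1}_{s' < t_\mathrm{st}} \, ds' \, ds .
\end{equation*}

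Then I would rewrite the exponential factors as $e^{-\e(t-s)} e^{-2\b(s-s')} = e^{-\e(t-s')} e^{-(2\b - \e)(s - s')}$, apply Fubini on the triangle $\{0 \leq s' \leq s \leq t\}$, and evaluate the inner integral $\int_{s'}^{t} e^{-(2\b-\e)(s-s')} \, ds \leq (2\b - \e)^{-1}$. This produces the prefactor $\frac{M^2}{2\b - \e} K_\k K_{B;\mathrm{lin}}^2$. Finally, because $t \geq t \wedge t_\mathrm{st}$, the indicator $\mathbf{1}_{s' < t_\mathrm{st}}$ together with the bound $e^{-\e(t-s')} \leq e^{-\e(t \wedge t_\mathrm{st} - s')}$ allows me to recognise what remains as $E\, N_{\e;II}(t \wedge t_\mathrm{st})$, yielding the stated estimate.

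I do not expect significant obstacles here: this is the long-time regime, so no short-time singularity of the semigroup is encountered, and no integration by parts in the spirit of Lemma \ref{lem:nls:f:nl:st} is needed. The only care required is the bookkeeping between the rescaled time $\tau$ and the physical time $s$, which is handled cleanly by Lemma \ref{lem:TauEst} and the uniform bound \sref{eq:prlm:kappa:glb:ests} on the Jacobian of the change of variables.
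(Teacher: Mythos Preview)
Your proposal is correct and follows essentially the same route as the paper: start from the It\^o isometry identity \sref{eq:nls:ito:isometry:i:lt}, apply the exponential semigroup decay together with the linear bound on $B_{\mathrm{lin}}$, convert to physical time via $s' = t_i(\tau')$ using \sref{eq:prlm:kappa:glb:ests} and Lemma \ref{lem:TauEst}, swap the order of integration to extract the factor $(2\beta - \e)^{-1}$, and then identify $N_{\e;II}(t\wedge t_{\mathrm{st}})$. One small remark: the norm appearing in $\mathcal{I}^{\mathrm{lt}}_{\e,\delta;B;\mathrm{lin}}$ is the $H^1$-norm, so strictly speaking you need the $L^2 \to H^1$ bound on $S(\cdot)Q$ rather than the $L^2 \to L^2$ bound; since $\tau_i(s)+\delta-\tau' \ge 1$ in the long-term regime, Lemma \ref{lem:nls:sem:group:decay} gives the same constant $Me^{-\beta t}$ either way, so this does not affect your argument.
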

\begin{proof}
Using \sref{eq:nls:ito:isometry:i:lt}
and switching the integration order,
we obtain
\begin{equation}
\begin{array}{lcl}
E \, \mathcal{I}^{\mathrm{lt}}_{\e,\delta;B;\mathrm{lin}}(t)
  &\leq  &
  M^2 K_{B;\mathrm{lin}}^2 E   \int_0^{t}
  e^{ - \e(t - s) }
    \int_0^{\tau_{i}(s)\wedge\tau_i(t_{\mathrm{st}})}  e^{-2 \beta(\tau_{i}(s)-\tau')}\norm{\oV_{i}(\tau')}_{H^1}^2 \, d\tau' \, ds
\\[0.2cm]
  &\leq  &
  M^2 K_\k K_{B;\mathrm{lin}}^2 E \int_0^{t}
  e^{ - \e(t - s) }
    \int_0^{s\wedge t_{\mathrm{st}}}  e^{-2 \beta(s-s')}\norm{V(s')}_{H^1}^2 \, ds' \, ds
\\[0.2cm]
& = &
  M^2 K_\k K_{B;\mathrm{lin}}^2 E   \int_0^{t\wedge t_{\mathrm{st}}}
   e^{-\e t}
   \Big[\int_{s'}^{t}
      e^{  -(2 \beta -\e )s } \, d s
   \Big]
      e^{2 \beta s'}\norm{V(s')}_{H^1}^2  \, ds'
\\[0.2cm]
& \le &
  \frac{M^2}{2 \beta - \e} K_\k K_{B;\mathrm{lin}}^2
  E   \int_0^{t\wedge t_{\mathrm{st}}}
   e^{-\e t} e^{-(2 \beta - \e)s'}
       e^{2 \beta s'}\norm{V(s')}_{H^1}^2  \, ds'
\\[0.2cm]
& \leq &
  \frac{M^2}{2 \beta - \e} K_\k K_{B;\mathrm{lin}}^2
  E   \int_0^{t\wedge t_{\mathrm{st}}}
   e^{-\e(t\wedge t_{\mathrm{st}} - s' )}  \norm{V(s')}_{H^1}^2  \, ds'
\\[0.2cm]
& = &
   \frac{M^2}{2 \beta - \e} K_\k K_{B;\mathrm{lin}}^2
  E  N_{\e;II}(t\wedge t_{\mathrm{st}}).
\end{array}
\end{equation}
\end{proof}

\begin{lem}
\label{lem:nls:b:lin:st}
Fix $T > 0$ and assume that (HDt), (HSt) and
(HTw), all hold.
Pick a constant $\e > 0$.
Then for
any $0 \leq \delta < 1$ ,
and any $0 \le t \le T$,
we have the bound
\begin{equation}
\begin{array}{lcl}
E \, \mathcal{I}^{\mathrm{sh}}_{\e,\delta;B;\mathrm{lin}}(t)
 & \le &    K_\k K_{B;\mathrm{lin}}^2 M^2 (1 + \rho_\mathrm{min}^{-1})e^\e( 3 K_\k  + 2 )
  E  N_{\e;II}(t\wedge t_{\mathrm{st}}).
\end{array}
\end{equation}
\end{lem}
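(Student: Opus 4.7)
The strategy follows the template of Lemma \ref{lem:nls:f:nl:st} closely. After applying the It\^o isometry \sref{eq:nls: :isometry:i:sh} to convert the expected squared $H^1$-norm of a stochastic convolution into a deterministic iterated integral, one is confronted with the bound $\nrm{S(\vartheta)Q}_{L^2\to H^1}^2 \le M^2 \vartheta^{-1}$ from Lemma \ref{lem:nls:sem:group:decay}, which is \emph{not} integrable near $\vartheta = 0$. The remedy is the pointwise identity
\begin{equation*}
\nrm{\sqrt{\rho}\,\p_\xi S(\vartheta) Q w}_{L^2}^2
 = -\tfrac{1}{2}\tfrac{d}{d\vartheta}\nrm{S(\vartheta) Q w}_{L^2}^2
 + \ip{[\mathcal{L}_{\mathrm{tw}} - \rho\p_{\xi\xi}] S(\vartheta) Q w,\,S(\vartheta) Q w}_{L^2},
\end{equation*}
which, together with the commutator estimate $\nrm{[\mathcal{L}_{\mathrm{tw}} - \rho\p_{\xi\xi}]S(\vartheta)Q}_{L^2\to L^2} \le M\vartheta^{-1/2}$ from the same lemma, replaces the non-integrable singularity with an integrable $\vartheta^{-1/2}$ piece, at the cost of a total-derivative term to be removed by integration by parts.

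Concretely, writing $\mathcal{E}_{\tau,\tau';\#} = \nrm{S(\tau+\delta-\tau')\,Q\,B_{\mathrm{lin}}(\oV_i(\tau'))}_\#^2$, I would use $\nrm{v}_{H^1}^2 \le (1+\rho_{\min}^{-1})\nrm{v}_{H^1_\rho}^2$ (as in the opening of the proof of Lemma \ref{lem:nls:f:nl:st}) and insert the artificial factor $[t_i'(\tau_i(s))]^{-1} \ge 1$ --- anticipating that it will cancel a Jacobian later --- to obtain
\begin{equation*}
E\,\mathcal{I}^{\mathrm{sh}}_{\e,\delta;B;\mathrm{lin}}(t)
 \le (1+\rho_{\min}^{-1})\bigl[\, M^2 K_{B;\mathrm{lin}}^2\,\mathcal{I}_I - \tfrac{1}{2}\mathcal{I}_{II}\,\bigr],
\end{equation*}
where $\mathcal{I}_I$ gathers the contributions bounded by $[1 + (\tau_i(s)+\delta-\tau')^{-1/2}]\nrm{\oV_i(\tau')}_{H^1}^2$ and $\mathcal{I}_{II}$ collects the $\p_1\mathcal{E}_{\tau_i(s),\tau';L^2}$ pieces.

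For $\mathcal{I}_I$ I would substitute $s' = t_i(\tau')$, use Lemma \ref{lem:TauEst} to replace $(\tau_i(s)+\delta-\tau_i(s'))^{-1/2}$ by $(s-s')^{-1/2}$, and then swap the order of integration; the inner integral $\int_{s'}^{s'+1}[1+(s-s')^{-1/2}]\,ds = 3$ yields $M^2 K_{B;\mathrm{lin}}^2\,\mathcal{I}_I \le 3 K_\kappa^2 e^\e M^2 K_{B;\mathrm{lin}}^2\,E\,N_{\e;II}(t\wedge t_{\mathrm{st}})$. For $\mathcal{I}_{II}$ I would reverse the order of integration and substitute $\tau = \tau_i(s)$ in the inner integral --- this is precisely where the inserted $[t_i']^{-1}$ cancels the Jacobian $t_i'(\tau)$ to produce the clean expression $\int_{\tau'}^{\tau^+(\tau')} e^{-\e(t-t_i(\tau))}\p_1\mathcal{E}_{\tau,\tau';L^2}\,d\tau$ with $\tau^+(\tau') = \min\{\tau_i(t),\tau'+1\}$. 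Integration by parts in $\tau$ then delivers two boundary terms $\mathcal{I}_{II;A},\mathcal{I}_{II;B}$ and an interior term $\mathcal{I}_{II;C}$ involving $\tfrac{d}{d\tau}e^{-\e(t-t_i(\tau))} = \e\,t_i'(\tau)\,e^{-\e(t-t_i(\tau))}$; each is bounded using the \emph{uniform} estimate $\mathcal{E}_{\tau,\tau';L^2} \le M^2 K_{B;\mathrm{lin}}^2\nrm{\oV_i(\tau')}_{H^1}^2$, valid for every $\tau \ge \tau'$, followed by a second substitution back to the original time.

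The main obstacle is engineering the integration by parts so that the $\delta \downarrow 0$ limit remains finite: the uniform bound at the boundary must depend only on $M^2\nrm{B_{\mathrm{lin}}(\oV_i(\tau'))}_{L^2}^2$, with no residual $\vartheta^{-1/2}$ factor, which is the whole point of trading the derivative term against integration by parts. Collecting the three pieces $|\mathcal{I}_{II;A}|+|\mathcal{I}_{II;B}|+|\mathcal{I}_{II;C}|$ together with the bound for $\mathcal{I}_I$ produces the prefactor $K_\kappa M^2 K_{B;\mathrm{lin}}^2(1+\rho_{\min}^{-1})e^\e(3K_\kappa + 2)$ claimed in the statement, and a Fatou argument identical to the closing lines of Lemma \ref{lem:nls:f:nl:st} extends the uniform-in-$\delta$ bound to $\delta = 0$.
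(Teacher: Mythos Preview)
Your proposal is correct and follows essentially the same route as the paper's proof: the It\^o isometry \sref{eq:nls: :isometry:i:sh}, the $H^1_\rho$ inner product together with the derivative identity \sref{eq:nls:deriv:semigroup2} (your version with a single $\langle[\mathcal{L}_{\mathrm{tw}}-\rho\p_{\xi\xi}]Sv,Sv\rangle$ term is algebraically equivalent to the paper's symmetric $\mathcal{L}_{\mathrm{tw}}/\mathcal{L}_{\mathrm{tw}}^*$ split), the artificial $[t_i'(\tau_i(s))]^{-1}$ factor, the order swap followed by the substitution $\tau=\tau_i(s)$, and the integration by parts into $\mathcal{I}_{II;A}+\mathcal{I}_{II;B}+\mathcal{I}_{II;C}$ all match. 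The only point to tidy up is that the stopping-time indicator $\mathbf{1}_{\tau'<\tau_i(t_{\mathrm{st}})}$ should be carried through so that your upper limit reads $\tau^+(\tau')=\min\{\tau_i(t\wedge t_{\mathrm{st}}),\tau'+1\}$ rather than $\min\{\tau_i(t),\tau'+1\}$; this is what ultimately produces $N_{\e;II}(t\wedge t_{\mathrm{st}})$ on the right-hand side.
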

\begin{proof} We only consider the case $\delta>0$ here, noting that the limit $\delta\downarrow0$ can be handled as in the proof of Lemma \ref{lem:nls:f:nl:st}. Applying the identity \sref{eq:nls:deriv:semigroup}
with $w=v$ and $\vartheta_A = \vartheta_B$,
we obtain
\begin{equation}
\label{eq:nls:deriv:semigroup2}
\begin{array}{lcl}
\frac{d}{d \vartheta} \nrm{ S(\vartheta + \vartheta_A) v}^2_{L^2}
& = &
  \langle  S(\vartheta + \vartheta_A) v ,
     \big[ \mathcal{L}_\mathrm{tw}^* - \rho\p_{\xi\xi}\big]
       S(\vartheta + \vartheta_A) v \rangle_{L^2}
\\[0.2cm]
& & \qquad
+ \langle  S(\vartheta + \vartheta_A) v ,  \
  \big[ \mathcal{L}_\mathrm{tw} - \rho\p_{\xi\xi}\big]  S(\vartheta + \vartheta_A) v
     \rangle_{L^2}
\\[0.2cm]
& & \qquad
  - 2 \nrm{ \sqrt\rho\p_{\xi} S(\vartheta + \vartheta_A) v}^2_{L^2} .
\end{array}
\end{equation}
Recalling the inner product \sref{eq:nls:def:inn:prod:h:1:rho}
and introducing the expression
\begin{equation}
\mathcal{E}_{\tau , \tau'; \#}
= \norm{S(\tau+\delta -\tau')Q B_{\mathrm{lin}}\big(\oV_{i}(\tau')\big) }_{\#}^2
\end{equation}
for $\# \in \{ L^2, H^1_\rho \}$,
we obtain the bound
\begin{equation}
\begin{array}{lcl}
\mathcal{E}_{\tau, \tau'; H^1_\rho}
& \le & M^2 K_{B;\mathrm{lin}}^2 \norm{\oV_{i}(\tau')}_{H^1}^2
 +  M^2 K_{B;\mathrm{lin}}^2 \frac{1}{\sqrt{\tau_{i}(s) + \delta - \tau'} }
    \norm{\oV_{i}(\tau')}_{H^1}^2
\\[0.2cm]
& & \qquad
  -  \frac{1}{2}
  \partial_1 \mathcal{E}_{\tau, \tau'; L^2 }
\end{array}
\end{equation}
for the values of $(s,\tau')$ that are relevant below.
Upon writing
\begin{align}\begin{split}
\mathcal{I}_{I}
 & =
    E \int_0^{t}
   e^{-\e(t-s) } \big[ t_i'\big(\tau_i(s) \big) \big]^{-1}\int_{\tau_{i}(s)-1}^{\tau_{i}(s)}
       \big[ 1 + \frac{1}{\sqrt{\tau_{i}(s) + \delta - \tau'} } ]
          \norm{\oV_{i}(\tau')}_{H^1}^2
          \mathbf{1}_{\tau'<\tau_i(t_\mathrm{st})} \, d\tau' \, ds ,
\\[0.2cm]
\mathcal{I}_{II}
& =
E  \int_0^{t}
   e^{-\e(t-s) }\big[ t_i'\big(\tau_i(s) \big) \big]^{-1} \int_{\tau_{i}(s)-1}^{\tau_{i}(s)}
    \partial_1 \mathcal{E}_{\tau_i(s) , \tau' ; L^2}
    \mathbf{1}_{\tau'<\tau_i(t_\mathrm{st})} \, d\tau' \, ds ,
    \end{split}
\end{align}
we obtain the estimate
\begin{equation}
\begin{array}{lcl}
E \, \mathcal{I}^{\mathrm{sh}}_{\nu,\delta;B;\mathrm{lin}}(t)
& \le &
  (1 + \rho_\mathrm{min}^{-1}) M^2 K^2_{B;\mathrm{lin}} \mathcal{I}_{I}
  - \frac{1}{2}(1 + \rho_\mathrm{min}^{-1})\mathcal{I}_{II} .
\end{array}
\end{equation}
Changing the integration order, we obtain
\begin{equation}
\begin{array}{lcl}
\mathcal{I}_{I}
 & =&
    E \int_0^{\tau_i(t\wedge t_{\mathrm{st}})} e^{-\e t}\Big[
 \int_{t_i(\tau')}^{\min\{t\wedge t_{\mathrm{st}},t_i(\tau'+1) \} }
                      \frac{e^{\e s}}{t_i'\big(\tau_i(s) \big)}
                      \big[
                         1 + \frac{1}{\sqrt{\tau_i(s)+\d-\tau'}} \big]  \, ds
                   \Big]   \nrm{\oV_{i}(\tau')}_{H^1}^2       \, d\tau ' ,
\\[0.2cm]
\mathcal{I}_{II}
& = &
E  \int_0^{\tau_i(t\wedge t_{\mathrm{st}})} e^{-\e t}
\int_{t_i(\tau')}^{\min\{ t\wedge t_{\mathrm{st}},t_i(\tau'+1) \} }
          \frac{e^{\e s}}{t_i'\big(\tau_i(s) \big)}
    \partial_1 \mathcal{E}_{\tau_i(s), \tau' ; L^2}
    \, ds
                         \, d\tau '.
\end{array}
\end{equation}
The substitution $s' = t_i(\tau')$
together with Lemma \ref{lem:TauEst}
 now yields
\begin{equation}
\begin{array}{lcl}
 \mathcal{I}_{I}
&\leq&
  K_\k^2E \int_0^{t\wedge t_{\mathrm{st}}} e^{-\e (t\wedge t_{\mathrm{st}})}\Big[
 \int_{s'}^{\min\{t\wedge t_{\mathrm{st}},t_i(\tau_i(s')+1) \} }
                      e^{\e s}
                      \big[
                         1 + \frac{1}{\sqrt{\tau_i(s)+\d-\tau(s')}} \big]  \, ds
                   \Big]   \nrm{V(s')}_{H^1}^2       \, ds '
\\[0.2cm]
& \leq &
  K_\k^2E \int_0^{t\wedge t_{\mathrm{st}}} e^{-\e (t\wedge t_{\mathrm{st}})}\Big[
 \int_{s'}^{\min\{t\wedge t_{\mathrm{st}},s' + 1 \} }
                      e^{\e s}
                      \big[
                         1 + \frac{1}{\sqrt{s+\d-s'}} \big]  \, ds
                   \Big]   \nrm{V(s')}_{H^1}^2       \, ds '
\\[0.2cm]
& \le &
3 e^{\e} K^2_\k
E \int_0^{t\wedge t_{\mathrm{st}}} e^{-\e (t\wedge t_{\mathrm{st}}-s')}  \nrm{V(s')}_{H^1}^2    \, ds'
\\[0.2cm]
& = &
3 e^{\e} K^2_\k
 E N_{\e;II}(t\wedge t_{\mathrm{st}}).
\\[0.2cm]
\end{array}
\end{equation}
For convenience, we introduce the notation
\begin{equation}
\tau^+(\tau') = \min\{ \tau_i(t\wedge t_{\mathrm{st}}),\tau'+1 \} .
\end{equation}
Substituting $\tau=\tau_i(s)$ and integrating by parts,
we may compute
\begin{equation}
\begin{array}{lcl}
 \mathcal{I}_{II} & = &
 E  \int_0^{\tau_i(t\wedge t_{\mathrm{st}})} e^{-\e t}
\int_{\tau'}^{  \tau^+(\tau')}
                      e^{\e t_i(\tau)}
                       \partial_1 \mathcal{E}_{\tau, \tau';L^2}
      \, d\tau
                       \, d\tau '
 \\[0.2cm]
 & = &
   \mathcal{I}_{II;A}
   + \mathcal{I}_{II;B}
   + \mathcal{I}_{II;C},
\end{array}
\end{equation}
in which we have introduced the expressions
\begin{equation}
\begin{array}{lcl}
\mathcal{I}_{II;A}
 & = &
   E \int_0^{\tau_i(t\wedge t_{\mathrm{st}})} e^{-\e t}
     e^{\e t_i(\tau^+(\tau')) } \mathcal{E}_{\tau^+(\tau'), \tau' ; L^2}
         \, d\tau' ,
 \\[0.2cm]
\mathcal{I}_{II;B}
 & = &
   - E  \int_0^{\tau_i(t\wedge t_{\mathrm{st}})} e^{-\e t}
     e^{\e t_i(\tau') } \mathcal{E}_{\tau', \tau' ; L^2}
     %
       \, d\tau' ,
\\[0.2cm]
\mathcal{I}_{II;C}
 & = &
   - E  \int_0^{\tau_i(t\wedge t_{\mathrm{st}})} e^{-\e t}
 \int_{\tau'}^{\tau^+(\tau')  }
                     \left(\frac{d}{d\tau} e^{\e t_i(\tau)}\right)
               \mathcal{E}_{\tau, \tau';L^2}
     \, d\tau    \, d\tau ' .
 \\[0.2cm]
\end{array}
\end{equation}

Upon computing
\begin{equation}
\int_{\tau'}^{  \tau^+(\tau') }
                      \abs{ \frac{d}{d\tau} e^{\e t_i(\tau)}  } \, d\tau
=  e^{\e t_i(\tau) } \big|_{\tau'}^{\tau^+(\tau')}
\le 2 e^{\e} e^{ \e t_i(\tau')},
\end{equation}
we can make the substitution $s' = t_i(\tau')$
and obtain the final estimate
\begin{equation}
\begin{array}{lcl}
\abs{\mathcal{I}_{II} }
& \leq &
  4 e^\e K_\k
  M^2 K_{B;\mathrm{lin}}^2 E\int_0^{t\wedge t_{\mathrm{st}}}
    e^{- \e(t\wedge t_{\mathrm{st}}-s')} \nrm{V(s')}^2_{H^1}ds' \\[0.2cm]
  &\leq& 4 e^\e K_\k
  M^2 K_{B;\mathrm{lin}}^2 E N_{\e;II}(t\wedge t_{\mathrm{st}}).
\end{array}
\end{equation}
\end{proof}

\begin{lem}
\label{lem:nls:b:cn}
Fix $T > 0$ and assume that (HDt), (HSt) and (HTw) all hold.
Pick a constant $0 < \e <  \beta$.
Then for
any $0 \leq \delta < 1$,
any $(\mathcal{F}_t)$-stopping time
$t_{\mathrm{st}}$ and any $0 \le t \le T$,
we have the bounds
\begin{equation}
\begin{array}{lcl}
E \, \mathcal{I}^{\mathrm{lt}}_{\e,\delta;B;\mathrm{cn}}(t)
 & \le & \frac{M^2}{(2 \beta-\e)\e} K_{B;\mathrm{cn}}^2 ,
\\[0.2cm]
E \, \mathcal{I}^{\mathrm{sh}}_{\e,\delta;B;\mathrm{cn}}(t)
 & \le &
 \frac{1}{\e} K_\k K_{B;\mathrm{lin}}^2 M^2 (1 + \rho_\mathrm{min}^{-1})e^\e( 3 K_\k  + 2 ).
\end{array}
\end{equation}
\end{lem}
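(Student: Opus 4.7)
The plan is to adapt the proofs of Lemmas~\ref{lem:nls:b:lin} and~\ref{lem:nls:b:lin:st} almost verbatim, making the substitutions $B_{\mathrm{lin}}\big(\oV_i(\tau')\big) \mapsto B_{\mathrm{cn}}$ and $\nrm{V(s')}_{H^1}^2 \mapsto K_{B;\mathrm{cn}}^2$ throughout. Since $B_{\mathrm{cn}}$ is deterministic and independent of $\tau'$, the $N_{\e;II}(t \wedge t_{\mathrm{st}})$ factor that appeared on the right-hand side of those two lemmas collapses into the trivial time integral $\int_0^{t \wedge t_{\mathrm{st}}} e^{-\e(t \wedge t_{\mathrm{st}} - s')}\, ds' \leq \e^{-1}$, which accounts for the extra $1/\e$ factor in both of the stated bounds.

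For the long-time estimate, I will first invoke the It\^o isometry identity \sref{eq:nls:ito:isometry:i:lt} with $\# = \mathrm{cn}$, and then bound the inner integrand using $\nrm{S(\tau)Q}_{\L(L^2;H^1)} \leq M e^{-\beta\tau}$ from Lemma~\ref{lem:nls:sem:group:decay}. This bound is applicable because $\tau_{i}(s) + \delta - \tau' \geq 1$ on the relevant integration range. Exchanging the order of integration, substituting $s' = t_i(\tau')$, and exploiting Lemma~\ref{lem:TauEst} to bound the Jacobian by $K_\kappa$ reduces the computation to the one already carried out in the proof of Lemma~\ref{lem:nls:b:lin}, the only difference being that the $H^1$-norm of $V(s')$ is now replaced by the constant $K_{B;\mathrm{cn}}$. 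Integrating out the resulting $ds'$-integral then produces the announced prefactor $M^2/[(2\beta - \e)\e]$.

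For the short-time estimate, I will proceed exactly as in the proof of Lemma~\ref{lem:nls:b:lin:st}, using the $H^1_\rho$-inner product \sref{eq:nls:def:inn:prod:h:1:rho} together with the ODE identity \sref{eq:nls:deriv:semigroup2}. This controls the integrand $\nrm{S(\tau_i(s)+\delta-\tau')QB_{\mathrm{cn}}}_{H^1_\rho}^2$ by $M^2 K_{B;\mathrm{cn}}^2 \big[1 + (\tau_i(s)+\delta-\tau')^{-1/2}\big]$ plus the derivative of an $L^2$-inner product, which we dispose of by integration by parts in the $\tau$-variable. After changing integration order and substituting $s' = t_i(\tau')$ to revert to the original time scale, the resulting two integrals play exactly the role of $\mathcal{I}_{I}$ and $\mathcal{I}_{II}$ in Lemma~\ref{lem:nls:b:lin:st}, with $\nrm{\oV_i(\tau')}_{H^1}^2$ replaced by the constant $K_{B;\mathrm{cn}}^2$. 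The residual $ds'$-integral contributes the $1/\e$ factor, and collecting the universal constants from the two pieces reproduces the factor $(3K_\kappa + 2)$ advertised in the statement.

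The one delicate point, exactly as in Lemma~\ref{lem:nls:b:lin:st}, is the uniform-in-$\delta \in [0,1)$ control of the singular $(\tau_i(s)+\delta-\tau')^{-1/2}$ factor in the short-time integrand, together with the interaction between the $\tau$- and $t$-time reparametrisations. This is already handled by the integration-by-parts argument above combined with the Fatou argument that was used to pass to the limit $\delta \downarrow 0$; since none of the bounds produced along the way depend on $\delta$, no new analytical ingredient beyond those already developed for Lemma~\ref{lem:nls:b:lin:st} will be required.
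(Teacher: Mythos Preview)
Your proposal is correct and follows essentially the same route as the paper: the paper's proof simply says to repeat Lemmas~\ref{lem:nls:b:lin} and~\ref{lem:nls:b:lin:st} with the substitutions $K_{B;\mathrm{lin}} \mapsto K_{B;\mathrm{cn}}$ and $E N_{\e;II}(t\wedge t_{\mathrm{st}}) \mapsto 1/\e$, the latter coming from the elementary bound $\int_0^{t} e^{-\e(t-s)}\,ds \le 1/\e$. Your write-up spells out in more detail how these substitutions propagate through the two arguments, but the underlying strategy is identical.
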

\begin{proof}
These results follows by repeating Lemmas \ref{lem:nls:b:lin} and \ref{lem:nls:b:lin:st}.
Since
\begin{equation}
\int_0^{t}
  e^{- \e(t - s) } \,ds
\le \frac{1}{\e},
\end{equation}
we can obtain the bounds
by making the substitution
\begin{equation}
  K_{B;\mathrm{lin}} \mapsto K_{B;\mathrm{cn}},
  \qquad
  \qquad
  E N_{\e;II}(t\wedge t_{\mathrm{st}}) \mapsto \frac{1}{\e}.
\end{equation}
\end{proof}

\begin{lem}
\label{lem:nls:st:sup:on:b:lin:i}
Fix $T > 0$ and assume that (HDt), (HSt) and (HTw) all hold.
Pick a constant $0< \e <2\beta$, then for any $0\leq \delta <1$ we have the bounds
\begin{equation}
\begin{array}{lcl}
E \sup_{0 \le t \le t_{\mathrm{st}}}
    \mathcal{I}^{\mathrm{lt}}_{\e,\delta;B; \mathrm{lin}}(t)
& \le &
  e^{\e} (T + 1) \frac{M^2}{2 \beta - \e} K_\k K_{B;\mathrm{lin}}^2
  E  \sup_{0 \le t \le t_{\mathrm{st}}} N_{\e;II}(t) ,
\\[0.2cm]
E \sup_{0 \le t \le t_{\mathrm{st}}}
    \mathcal{I}^{\mathrm{sh}}_{\e,\delta;B; \mathrm{lin}}(t)
& \le &
  e^{\e} (T + 1)
   K_{B;\mathrm{lin}}^2 M^2 (1 + \rho^{-1}) e^{\e}( 3K_\k  + 2 )
  E  \sup_{0 \le t \le t_{\mathrm{st}}} N_{\e;II}(t) ,
\end{array}
\end{equation}
and
\begin{equation}
\begin{array}{lcl}
E \sup_{0 \le t \le t_{\mathrm{st}}}
    \mathcal{I}^{\mathrm{lt}}_{\e,\delta;B; \mathrm{cn}}(t)
& \le &
  e^{\e} (T + 1)
\frac{M^2}{(2 \beta - \e)\e} K_\k K_{B;\mathrm{cn}}^2 ,
\\[0.2cm]
E \sup_{0 \le t \le t_{\mathrm{st}}}
    \mathcal{I}^{\mathrm{sh}}_{\e,\delta;B; \mathrm{cn}}(t)
& \le &
  e^{\e} (T + 1)
K_\k K_{B;\mathrm{cn}}^2  \frac{M^2}{\e} (1 + \rho^{-1}) e^{\e}
   ( 3K_\k  + 2 ) .
\end{array}
\end{equation}

\end{lem}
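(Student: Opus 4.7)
The plan is to reduce each of the four bounds to its unweighted counterpart by exploiting $e^{-\e(t-s)}\le 1$, then to apply the It\^o isometry arguments already developed in Lemmas \ref{lem:nls:b:lin}--\ref{lem:nls:b:cn}, and finally to invoke the unit-interval splitting from Lemma \ref{lem:nls:b:lin:sup:e} to reinstate an exponential weighting on the $V$-side at the price of a factor $(T+1)e^\e$.

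First I observe that each integrand $\|S(\delta)\mathcal{E}^{\#}_{B;\#'}(s)\|_{H^1}^2$ is nonnegative, so that for every $0\le t\le t_\mathrm{st}$ the pathwise bound
\begin{equation}
\mathcal{I}^{\#}_{\e,\delta;B;\#'}(t)\le \int_0^{t_\mathrm{st}}\|S(\delta)\mathcal{E}^{\#}_{B;\#'}(s)\|_{H^1}^2\,ds
\end{equation}
holds. The right-hand side is $t$-independent, hence dominates $\sup_{0\le t\le t_\mathrm{st}}\mathcal{I}^{\#}_{\e,\delta;B;\#'}(t)$ almost surely. Taking expectations, I repeat the It\^o-isometry identity \sref{eq:nls:ito:isometry:i:lt}-\sref{eq:nls: :isometry:i:sh} together with the pathwise estimates from the proofs of Lemmas \ref{lem:nls:b:lin}, \ref{lem:nls:b:lin:st} and \ref{lem:nls:b:cn}, now with the outer weight $e^{-\e(t-s)}$ replaced by $1$. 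The long-time cases reduce to a direct application of $\int_0^\infty e^{-2\beta(\tau_i(s)-\tau')}\,d\tau'\le (2\beta)^{-1}$, which is absorbed into the advertised $(2\beta-\e)^{-1}$; the short-time cases are controlled by the same $H^1_\rho$-splitting and integration-by-parts argument that produced $\mathcal{I}_{I}$, $\mathcal{I}_{II;A}$, $\mathcal{I}_{II;B}$, $\mathcal{I}_{II;C}$ in the proof of Lemma \ref{lem:nls:b:lin:st}. This yields an upper bound proportional to $E\int_0^{t_\mathrm{st}}\|V(s)\|_{H^1}^2\,ds$ for the $\mathrm{lin}$-variants and to $T$ for the $\mathrm{cn}$-variants. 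The splitting of $[0,t_\mathrm{st}]$ into unit subintervals carried out inside Lemma \ref{lem:nls:b:lin:sup:e} then gives
\begin{equation}
E\int_0^{t_\mathrm{st}}\|V(s)\|_{H^1}^2\,ds\le (T+1)e^\e\,E\sup_{0\le t\le t_\mathrm{st}}N_{\e;II}(t),
\end{equation}
and the bound analogous to $T=(T+1)\cdot 1\le (T+1)e^\e$ covers the constant case. The limit $\delta\downarrow 0$ is handled by Fatou's lemma exactly as at the end of the proof of Lemma \ref{lem:nls:f:nl:st}, using that all of the above constants are independent of $\delta$.

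The main obstacle is bookkeeping in the short-time estimate: without the outer exponential weighting, the argument of Lemma \ref{lem:nls:b:lin:st} is actually cleaner, but one must verify that the interchange of integration and the substitution $\tau=\tau_i(s)$ (whose Jacobian cancels exactly against the $[t_i'(\tau_i(s))]^{-1}$ factor that was inserted for this very purpose) still goes through, that the boundary contribution $\mathcal{I}_{II;B}$ remains well-defined for $\delta>0$ and is controlled by the $\mathcal{E}_{\tau',\tau';L^2}$-term, and that $\mathcal{I}_{II;C}$ -- which uses $\int_{\tau'}^{\tau^+(\tau')}\bigl|\tfrac{d}{d\tau}e^{\e t_i(\tau)}\bigr|\,d\tau\le 2e^\e e^{\e t_i(\tau')}$ -- yields an $O(e^\e)$-sized constant rather than something depending badly on $T$. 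Once these routine but delicate verifications are in place, the stated bounds follow by collecting constants, with the $K_\k$-factors tracking the usual $\kappa_{\sigma;i}$-bounds from \sref{eq:prlm:kappa:glb:ests} and the $(1+\rho_\mathrm{min}^{-1})$-factor coming from the comparison between the $H^1$- and $H^1_\rho$-norms.
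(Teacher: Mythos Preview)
Your approach is correct. The paper itself merely cites Lemmas~9.20 and~9.21 of \cite{Hamster2017}, so your argument is in fact a fully worked-out proof rather than a mere reference.

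That said, the route taken in the cited lemmas is almost certainly slightly different, and arguably cleaner. Observe that the constants in the statement are \emph{exactly} $(T+1)e^{\e}$ times the right-hand sides of Lemmas~\ref{lem:nls:b:lin}, \ref{lem:nls:b:lin:st} and~\ref{lem:nls:b:cn}. This points to the following argument: since $t\mapsto e^{\e t}\mathcal{I}^{\#}_{\e,\delta;B;\#'}(t)$ is nondecreasing, one has for any integer $k$ and any $k-1\le t\le k$ the pathwise bound $\mathcal{I}^{\#}_{\e,\delta;B;\#'}(t)\le e^{\e}\,\mathcal{I}^{\#}_{\e,\delta;B;\#'}(k)$, hence
\[
\sup_{0\le t\le t_{\mathrm{st}}}\mathcal{I}^{\#}_{\e,\delta;B;\#'}(t)
\le e^{\e}\sum_{k=1}^{\lceil T\rceil}\mathcal{I}^{\#}_{\e,\delta;B;\#'}(k).
\]
Taking expectations and applying the already-proved fixed-time bounds $E\,\mathcal{I}^{\#}_{\e,\delta;B;\#'}(k)\le C\,E\,N_{\e;II}(k\wedge t_{\mathrm{st}})\le C\,E\sup_{0\le t\le t_{\mathrm{st}}}N_{\e;II}(t)$ (respectively $\le C$ in the $\mathrm{cn}$-case) immediately yields the stated result with no need to re-open the proofs of Lemmas~\ref{lem:nls:b:lin}--\ref{lem:nls:b:cn}. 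Your method instead discards the weight, re-derives the unweighted analogues of those lemmas, and then reinstates the weight via the unit-interval splitting; this is equally valid but duplicates work. One small slip: once you set the outer weight to $1$, there is no $e^{\e t_i(\tau)}$ factor left after the substitution $\tau=\tau_i(s)$, so the term you call $\mathcal{I}_{II;C}$ actually vanishes identically and the integration by parts reduces to the fundamental theorem of calculus --- the short-time argument simplifies rather than requiring the delicate verification you describe.
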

\begin{proof}
This follows directly from Lemmas 9.20 and 9.21 in \cite{Hamster2017}.
\end{proof}

\begin{proof}[Proof of Proposition \ref{prp:nls:general}]
Pick $T >0$ and $0 < \eta < \eta_0$
and write $t_{\mathrm{st}} = t_{\mathrm{st}}(T,\e, \eta)$.
Since the identities \sref{eq:MildNonProjZero}
with $v = V(t\wedge t_{\mathrm{st}})$ hold for all $0 \le t \le T$,
we may compute
\begin{equation}
\begin{array}{lcl}
E\sup_{0\leq t\leq t_{\mathrm{st}}}[N^i_{\e;I}(t)]
& \le &
7 E \sup_{0\leq t\leq t_{\mathrm{st}}}
\Big[
\norm{\mathcal{E}_0(t)}_{L^2}^2
+ \sigma^4 \norm{\mathcal{E}_{F;\mathrm{lin}}(t)}_{L^2}^2
+ \norm{\mathcal{E}_{F;\mathrm{nl}}(t)}_{L^2}^2
\\[0.2cm]
& & \qquad
+ \sigma^2 \norm{\mathcal{E}_{B;\mathrm{lin}}(t)}_{L^2}^2
+ \sigma^2 \norm{\mathcal{E}_{B;\mathrm{cn}}(t)}_{L^2}^2
\\[0.2cm]
& & \qquad
+ \norm{\mathcal{E}^{\mathrm{lt}}_{\mathrm{so}}(t)}_{L^2}^2
+ \norm{\mathcal{E}^{\mathrm{st}}_{\mathrm{so}}(t)}_{L^2}^2
\Big]
\end{array}
\end{equation}
by applying Young's inequality.
The inequalities in
Lemmas \ref{lem:ColPrevRes}-\ref{lem:nls:st:sup:on:b:lin:i} now imply that
\begin{equation}
\begin{array}{lcl}
E\sup_{0\leq t\leq t_{\mathrm{st}}}[N^i_{\e;I}(t)]
& \le &
C_1 \big[ \norm{V(0)}_{H^1}^2
  + (\eta+\sigma^2T + \sigma^4) \sup_{0\leq t\leq t_{\mathrm{st}}}N_{\e;II}(t)
\big].
\end{array}
\end{equation}
In addition,
we note that
\begin{equation}
\begin{array}{lcl}
E\sup_{0\leq t\leq t_{\mathrm{st}}} N^i_{\e,0;II}(t)
& \le &
11 E \sup_{0\leq t\leq t_{\mathrm{st}}}
\Big[
  \mathcal{I}_{\e,0;0}(t)
+ \sigma^4 \mathcal{I}^{\mathrm{lt}}_{\e,0;F;\mathrm{lin}}(t)
+ \sigma^4 \mathcal{I}^{\mathrm{sh}}_{\e,0;F;\mathrm{lin}}(t)
\\[0.2cm]
& & \qquad \qquad
+ \mathcal{I}^{\mathrm{lt}}_{\e,0;F;\mathrm{nl}}(t)
+ \mathcal{I}^{\mathrm{sh}}_{\e,0;F;\mathrm{nl}}(t)
\\[0.2cm]
& & \qquad \qquad
+ \sigma^2 \mathcal{I}^{\mathrm{lt}}_{\e,0;B;\mathrm{lin}}(t)
+ \sigma^2 \mathcal{I}^{\mathrm{sh}}_{\e,0;B;\mathrm{lin}}(t)
\\[0.2cm]
& & \qquad \qquad
+\sigma^2 \mathcal{I}^{\mathrm{lt}}_{\e,0;B;\mathrm{cn}}(t)
+ \sigma^2 \mathcal{I}^{\mathrm{sh}}_{\e,0;B;\mathrm{cn}}(t)
\\[0.2cm]
& & \qquad \qquad
+ \mathcal{I}^{\mathrm{lt}}_{\e,0;\mathrm{so}}(t)
+ \mathcal{I}^{\mathrm{sh}}_{\e,0;\mathrm{so}}(t)
\Big] .
\end{array}
\end{equation}
The inequalities in
Lemmas \ref{lem:ColPrevRes}-\ref{lem:nls:b:cn}
now imply that
\begin{equation}
\begin{array}{lcl}
E \sup_{0\leq t\leq t_{\mathrm{st}}}N^i_{\e,0;II}(t)
& \le &
C_2 \big[ \norm{V(0)}_{H^1}^2
  + \sigma^2 T  + (\eta + \sigma^2 T + \sigma^4)\sup_{0\leq t\leq t_{\mathrm{st}}} N_{\e;II}(t)
\big].
\end{array}
\end{equation}
In particular, we see that
\begin{equation}
E \sup_{0\leq t\leq t_{\mathrm{st}}} N_\e^{i}(t)
\le C_3 \big[
  \norm{V(0)}_{H^1}^2
  + \sigma^2T + (\eta + \sigma^2 T + \sigma^4)
   E\sup_{0\leq t\leq t_{\mathrm{st}}} N_{\e}(t)
\big] .
\end{equation}
The desired bound hence follows by summing over $i$ and appropriately restricting the size of
$\eta + \sigma^2T + \sigma^4$.
\end{proof}

\bibliographystyle{klunumHJ}
\bibliography{ref}

\begin{thebibliography}{000}

\bibitem{alexander1990topological}
J. Alexander, R. Gardner and C. Jones (1990), A topological invariant arising
  in the stability analysis of travelling waves.
\newblock {\em J. reine angew. Math} {\bf 410}(167-212), 143.

\bibitem{HJHNLS}
M. Beck, H.~J. Hupkes, B. Sandstede and K. Zumbrun (2010), Nonlinear
  {S}tability of {S}emidiscrete {S}hocks for {T}wo-{S}ided {S}chemes.
\newblock {\em SIAM J. Math. Anal.} {\bf 42}, 857--903.

\bibitem{beck2010nonlinear}
M. Beck, B. Sandstede and K. Zumbrun (2010), Nonlinear stability of
  time-periodic viscous shocks.
\newblock {\em Archive for rational mechanics and analysis} {\bf 196}(3),
  1011--1076.

\bibitem{BHM}
H. Berestycki, F. Hamel and H. Matano (2009), Bistable traveling waves around
  an obstacle.
\newblock {\em Comm. Pure Appl. Math.} {\bf 62}(6), 729--788.

\bibitem{Bloemker}
L.~A. Bianchi, D. Bl\"omker and P. Wacker (2017), Pattern size in Gaussian
  fields from spinodal decomposition.
\newblock {\em SIAM Journal on Applied Mathematics} {\bf 77}(4), 1292--1319.

\bibitem{brassesco1995brownian}
S. Brassesco, A. De~Masi and E. Presutti (1995), Brownian fluctuations of the
  interface in the D=1 Ginzburg-Landau equation with noise.
\newblock {\em Ann. Inst. H. Poincar{\'e} Probab. Statist} {\bf 31}(1),
  81--118.

\bibitem{bressloff2015nonlinear}
P.~C. Bressloff and Z.~P. Kilpatrick (2015), Nonlinear Langevin equations for
  wandering patterns in stochastic neural fields.
\newblock {\em SIAM Journal on Applied Dynamical Systems} {\bf 14}(1),
  305--334.

\bibitem{Bressloff}
P.~C. Bressloff and M.~A. Webber (2012), Front propagation in stochastic neural
  fields.
\newblock {\em SIAM Journal on Applied Dynamical Systems} {\bf 11}(2),
  708--740.

\bibitem{cartwright2019}
M. Cartwright and G.~A. Gottwald (2019), A collective coordinate framework to
  study the dynamics of travelling waves in stochastic partial differential
  equations.
\newblock {\em Physica D: Nonlinear Phenomena}.

\bibitem{chen2015traveling}
C.-N. Chen and Y. Choi (2015), Traveling pulse solutions to FitzHugh--Nagumo
  equations.
\newblock {\em Calculus of Variations and Partial Differential Equations} {\bf
  54}(1), 1--45.

\bibitem{cornwell2017opening}
P. Cornwell (2017), Opening the Maslov Box for Traveling Waves in Skew-Gradient
  Systems.
\newblock {\em arXiv preprint arXiv:1709.01908}.

\bibitem{cornwell2017existence}
P. Cornwell and C.~K. Jones (2017), On the Existence and Stability of Fast
  Traveling Waves in a Doubly-Diffusive FitzHugh-Nagumo System.
\newblock {\em arXiv preprint arXiv:1709.09132}.

\bibitem{DaPratomild}
G. Da~Prato, A. Jentzen and M. R{\"o}ckner (2010), A mild {I}t{\^o} formula for
  {SPDE}s.
\newblock {\em arXiv preprint arXiv:1009.3526}.

\bibitem{NunnoAdvMathFinance2011}
G. di~Nunno and B.~O. (editors) (2011), {\em Advanced Mathematical Methods for
  Finance}.
\newblock Springer.

\bibitem{Climate}
C.~L.~E. Franzke, T.~J. O'Kane, J. Berner, P.~D. Williams and V. Lucarini
  (2015), Stochastic climate theory and modeling.
\newblock {\em Wiley Interdisciplinary Reviews: Climate Change} {\bf 6}(1),
  63--78.

\bibitem{funaki1995scaling}
T. Funaki (1995), The scaling limit for a stochastic PDE and the separation of
  phases.
\newblock {\em Probability Theory and Related Fields} {\bf 102}(2), 221--288.

\bibitem{Garcia2001}
J. Garc{\'\i}a-Ojalvo, F. Sagu{\'e}s, J.~M. Sancho and L. Schimansky-Geier
  (2001), Noise-enhanced excitability in bistable activator-inhibitor media.
\newblock {\em Physical Review E} {\bf 65}(1), 011105.

\bibitem{gowda2015early}
K. Gowda and C. Kuehn (2015), Early-warning signs for pattern-formation in
  stochastic partial differential equations.
\newblock {\em Communications in Nonlinear Science and Numerical Simulation}
  {\bf 22}(1), 55--69.

\bibitem{Hamster2017}
C.~H.~S. Hamster and H.~J. Hupkes (2019), Stability of Traveling Waves for
  Reaction-Diffusion Equations with Multiplicative Noise.
\newblock {\em SIAM Journal on Applied Dynamical Systems} {\bf 18}(1),
  205--278.

\bibitem{Hamster2020}
C.~H.~S. Hamster and H.~J. Hupkes (2020), Travelling waves for
  reaction--diffusion equations forced by translation invariant noise.
\newblock {\em Physica D: Nonlinear Phenomena} {\bf 401}, 132233.

\bibitem{HJHSTB2D}
A. Hoffman, H. Hupkes and E. Van~Vleck (2015), Multi-dimensional Stability of
  Waves Travelling through Rectangular Lattices in Rational Directions.
\newblock {\em Transactions of the American Mathematical Society} {\bf
  367}(12), 8757--8808.

\bibitem{HJHOBST2D}
A. Hoffman, H. Hupkes and E. Van~Vleck (2017), {\em Entire {S}olutions for
  {B}istable {L}attice {D}ifferential {E}quations with {O}bstacles}.
\newblock American Mathematical Society.

\bibitem{Inglis}
J. Inglis and J. MacLaurin (2016), A general framework for stochastic traveling
  waves and patterns, with application to neural field equations.
\newblock {\em SIAM Journal on Applied Dynamical Systems} {\bf 15}(1),
  195--234.

\bibitem{KAP1997}
T. Kapitula (1997), Multidimensional {S}tability of {P}lanar {T}ravelling
  {W}aves.
\newblock {\em Trans. Amer. Math. Soc.} {\bf 349}, 257--269.

\bibitem{KatjaClaudia}
C. Knoche and K. Frieler (2001), Solutions of stochastic differential equations
  in infinite dimensional Hilbert spaces and their dependence on initial data.
\newblock Diplomarbeit, {B}i{B}o{S}-{P}reprint {E}02-04-083, Bielefeld
  University.

\bibitem{Kuske2017}
R. Kuske, C. Lee and V. Rottsch{\"a}fer (2017), Patterns and coherence
  resonance in the stochastic Swift-Hohenberg equation with Pyragas control:
  The Turing bifurcation case.
\newblock {\em Physica D: Nonlinear Phenomena} pp.~--.

\bibitem{Lang}
E. Lang (2016), A multiscale analysis of traveling waves in stochastic neural
  fields.
\newblock {\em SIAM Journal on Applied Dynamical Systems} {\bf 15}(3),
  1581--1614.

\bibitem{leadbetter2012extremes}
M.~R. Leadbetter, G. Lindgren and H. Rootz{\'e}n (2012), {\em Extremes and
  related properties of random sequences and processes}.
\newblock Springer Science \& Business Media.

\bibitem{LiuRockner}
W. Liu and M. R{\"o}ckner (2010), {SPDE} in {H}ilbert space with locally
  monotone coefficients.
\newblock {\em Journal of Functional Analysis} {\bf 259}(11), 2902--2922.

\bibitem{Lord2012}
G. Lord and V. Th{\"u}mmler (2012), Computing stochastic traveling waves.
\newblock {\em SIAM Journal on Scientific Computing} {\bf 34}(1), B24--B43.

\bibitem{lorenzi2004analytic}
L. Lorenzi, A. Lunardi, G. Metafune and D. Pallara (2004), Analytic semigroups
  and reaction-diffusion problems.
\newblock In: {\em Internet Seminar}, Vol. 2005.
\newblock p. 127.

\bibitem{MasciaZumbrun02}
C. Mascia and K. Zumbrun (2002), Pointwise {G}reen's function bounds and
  stability of relaxation shocks.
\newblock {\em Indiana Univ. Math. J.} {\bf 51}(4), 773--904.

\bibitem{Shardlow}
T. Shardlow (2005), Numerical simulation of stochastic PDEs for excitable
  media.
\newblock {\em Journal of computational and applied mathematics} {\bf 175}(2),
  429--446.

\bibitem{Stannat}
W. Stannat (2013), Stability of travelling waves in stochastic {N}agumo
  equations.
\newblock {\em arXiv preprint arXiv:1301.6378}.

\bibitem{stannat2014stability}
W. Stannat (2014), Stability of travelling waves in stochastic bistable
  reaction-diffusion equations.
\newblock {\em arXiv preprint arXiv:1404.3853}.

\bibitem{veraar2011note}
M. Veraar and L. Weis (2011), A note on maximal estimates for stochastic
  convolutions.
\newblock {\em Czechoslovak mathematical journal} {\bf 61}(3), 743.

\bibitem{vinals1991numerical}
J. Vi{\~n}als, E. Hern{\'a}ndez-Garc{\'\i}a, M. San~Miguel and R. Toral (1991),
  Numerical study of the dynamical aspects of pattern selection in the
  stochastic Swift-Hohenberg equation in one dimension.
\newblock {\em Physical Review A} {\bf 44}(2), 1123.

\bibitem{Zhang}
J. Zhang, A. Holden, O. Monfredi, M. Boyett and H. Zhang (2009), Stochastic
  vagal modulation of cardiac pacemaking may lead to erroneous identification
  of cardiac â€œchaosâ€.
\newblock {\em Chaos: An Interdisciplinary Journal of Nonlinear Science} {\bf
  19}(2), 028509.

\bibitem{Zumbrun2009}
K. Zumbrun (2011), Instantaneous {S}hock {L}ocation and {O}ne-{D}imensional
  {N}onlinear {S}tability of {V}iscous {S}hock {W}aves.
\newblock {\em Quarterly of applied mathematics} {\bf 69}(1), 177--202.

\end{thebibliography}

\end{document}